\documentclass{amsart}
\usepackage[czech,english]{babel}
\usepackage{amsmath,amsthm,amssymb,amscd}
\usepackage{xypic}

\theoremstyle{plain}
\newtheorem{thm}{Theorem}[section]
\newtheorem{lem}[thm]{Lemma}
\newtheorem{prop}[thm]{Proposition}
\newtheorem{cor}[thm]{Corollary}

\theoremstyle{definition}
\newtheorem{defn}[thm]{Definition}
\newtheorem{nota}[thm]{Notation}

\theoremstyle{remark}
\newtheorem{rem}[thm]{Remark}

\theoremstyle{plain}
\newtheorem*{thmGeneric}{Theorem}
\newtheorem*{corDecVsKapl}{Corollary~\ref{cor:dec_kapl}}
\newtheorem*{propAddCl}{Proposition~\ref{prop:add_deconstr}}
\newtheorem*{thmBoundingLength}{Theorem~\ref{thm:bounding length}}
\newtheorem*{thmCpxs}{Theorem~\ref{thm:deconstr_cpxs}}

\renewcommand{\iff}{if and only if }
\newcommand{\st}{such that }

\newcommand{\la}{\longrightarrow}
\newcommand{\CGcs}{{\Cpx\G,c.s.}}

\DeclareMathOperator{\Hom}{Hom}

\DeclareMathOperator{\End}{End}
\DeclareMathOperator{\Ext}{Ext}
\newcommand{\ExtG}{{\Ext_\G^1}}
\DeclareMathOperator{\Ker}{Ker}
\DeclareMathOperator{\Img}{Im}
\DeclareMathOperator{\Coker}{Coker}

\DeclareMathOperator{\Sum}{Sum}

\DeclareMathOperator{\Subobj}{Subobj}
\newcommand{\li}{\varinjlim}

\newcommand{\Mod}[1]{\hbox{\rm Mod-}{#1}}

\newcommand{\Qco}[1]{\mathfrak{Qco}({#1})}
\newcommand{\Filt}[1]{\hbox{\rm Filt-}{#1}}
\newcommand{\dg}[1]{\hbox{\rm dg-}\tilde{#1}}
\newcommand{\Cpx}[1]{\mathbf{C}({#1})}

\newcommand{\Der}[1]{\mathbf{D}({#1})}
\newcommand{\Ab}{\mathbf{Ab}}


\newcommand{\C}{\mathcal{C}}

\newcommand{\E}{\mathcal{E}}
\newcommand{\F}{\mathcal{F}}
\newcommand{\G}{\mathcal{G}}
\newcommand{\clH}{\mathcal{H}}
\newcommand{\I}{\mathcal{I}}
\newcommand{\clL}{\mathcal{L}}

\newcommand{\Q}{\mathcal{Q}}
\newcommand{\clS}{\mathcal{S}}

\newcommand{\U}{\mathcal{U}}

\newcommand{\Z}{\mathbb{Z}}

\newcommand{\Pow}{\mathcal{P}}
\DeclareMathOperator{\lev}{lev}
\newcommand{\card}[1]{\left\lvert{#1}\right\rvert}


\title{Deconstructibility and the Hill lemma in Grothendieck categories}

\author{Jan \v{S}\v{t}ov\'\i\v{c}ek}
\address{
Charles University in Prague, Faculty of Mathematics and Physics \\
Department of Algebra \\
Sokolovska 83, 186 75 Praha 8, Czech Republic
}
\email{stovicek@karlin.mff.cuni.cz}

\subjclass[2000]{18E15 (primary), 18G35, 16D70 (secondary)}
\keywords{Grothendieck categories, deconstructible classes, Generalized Hill Lemma, chain complexes}

\date{January 26, 2011}

\thanks{%
The research was supported by the grant GA\v{C}R P201/10/P084 and the research project MSM~0021620839.
}

\begin{document}

\begin{abstract}
A full subcategory of a Grothendieck category is called deconstructible if it consists of all transfinite extensions of some set of objects. This concept provides a handy framework for structure theory and construction of approximations for subcategories of Grothendieck categories. It also allows to construct model structures and t-structures on categories of complexes over a Grothendieck category. In this paper we aim to establish fundamental results on deconstructible classes and outline how to apply these in the areas mentioned above. This is related to recent work of Gillespie, Enochs, Estrada, Guil Asensio, Murfet, Neeman, Prest, Trlifaj and others.
\end{abstract}

\maketitle


\section*{Introduction}

The main subject of this paper is the notion of a deconstructible class in a Grothendieck category. Roughly speaking, these are classes whose all objects can be formed by transfinite extensions from a set of objects. Such classes arise in plenitude in structure theory of modules and in homological algebra, the classes of abelian $p$-groups, projective modules or flat quasi-coherent sheaves serving as examples. We aim to establish easy to use yet strong enough properties which cover a considerable part of technicalities tackled in the recent work of Gillespie, Estrada, Guil Asensio, Murfet, Neeman, Prest, Trlifaj and others; we refer to~\cite{EGPT,G3,G2,G1,H3,Mur,Nee,Nee3}.

\smallskip

In the last decade, deconstructible classes in module categories have been implicitly used in analyzing structure of modules (e.g.~\cite{GT,AHT}), to build up theory for approximations and cotorsion pairs~\cite{GT}, which was among others successfully applied to solve the Baer splitting problem~\cite{ABH} and to prove finite type of tilting modules~\cite{BH,BS}.
As explained in~\cite[\S4]{SaSt}, it turned out recently that analogous ideas apply in a much broader setting. For example in:

\begin{enumerate}
\item Gillespie's construction~\cite{G3,G2,G1} of flat monoidal model structures on module and sheaf categories (see~\cite{H3} for a nice overview).

\item Neeman's~\cite{Nee} and Murfet's~\cite[3.16]{Mur} proof of existence of certain triangulated adjoint functors without using Brown representability (see~\cite{Nee4} for an overview and motivation).
\end{enumerate}

As we explain in a moment, a key point in both settings is the fact that certain classes of chain complexes over modules or sheaves are deconstructible. Here we get to the main aim of the paper: We show that the notion of deconstructibility in Grothendieck categories is very well-behaved. The main reason for the favorable properties is a so-called Generalized Hill Lemma (Theorem~\ref{thm:generalized hill lemma} in this paper) which originated in the theory of abelian $p$-groups~\cite{Hill}. It roughly says that given one expression of an object in a Grothendieck category as a transfinite extension with prescribed factors, there are typically many other such expressions. Though being somewhat technical, the Hill Lemma has several nice consequences.

\smallskip

First of all, if $\F$ is a deconstructible class in a Grothendieck category and $X \subseteq F$ is a ``small'' subobject of $F \in \F$, there is always a ``small'' subobject $Y$ \st $X \subseteq Y \subseteq F$ and $Y, F/Y \in \F$. Such a property was very important for instance in Gillespie's considerations in~\cite{G1}. Giving it a precise meaning and a name in Definition~\ref{def:kaplansky}, we get: 

\begin{corDecVsKapl}
Any deconstructible class in a Grothendieck category is a $\kappa$-Ka\-plan\-sky class for arbitrarily large regular cardinals $\kappa$. On the other hand any Kaplansky class which is closed under taking direct limits is deconstructible.
\end{corDecVsKapl}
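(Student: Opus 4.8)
The plan is to prove the two implications separately, using the Generalized Hill Lemma (Theorem~\ref{thm:generalized hill lemma}) for the first and a transfinite, Kaplansky-style construction for the second.

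For the first implication, suppose $\F = \Filt{\clS}$ for a set $\clS$, and fix a regular cardinal $\kappa$ larger than the number of generators needed to present any member of $\clS$, larger than $\card{\clS}$, and larger than a bound attached to a fixed generator of the category. Given $F \in \F$, choose an $\clS$-filtration of $F$ and feed it into the Hill Lemma to obtain the associated complete distributive lattice $\clL$ of subobjects of $F$. The key point is that, for $\kappa$ chosen as above, the Hill Lemma guarantees that every $<\kappa$-generated subobject $X \subseteq F$ is contained in a $<\kappa$-generated member $Y$ of $\clL$; moreover, since $0, Y, F$ all lie in $\clL$, both $Y = Y/0$ and $F/Y$ are again $\clS$-filtered and hence lie in $\F$. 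This is precisely the defining property of a $\kappa$-Kaplansky class in Definition~\ref{def:kaplansky}, and since $\kappa$ ranged over all sufficiently large regular cardinals we obtain the claim.

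For the converse, let $\F$ be $\kappa$-Kaplansky and closed under direct limits, and let $\clS$ be a representative set of all $<\kappa$-generated objects of $\F$; this is a genuine set because a Grothendieck category has, up to isomorphism, only a set of $<\kappa$-generated objects. I claim $\F = \Filt{\clS}$. The inclusion $\Filt{\clS} \subseteq \F$ follows from closure of $\F$ under extensions at successor stages together with closure under direct limits at limit stages. For the inclusion $\F \subseteq \Filt{\clS}$, fix $F \in \F$, well-order a generating set $(g_i)_{i<\mu}$ of $F$ arising from a fixed generator, and build a continuous chain $(F_\alpha)$ with $F_0 = 0$ as follows. At a successor step form a $<\kappa$-generated subobject $X' \subseteq F/F_\alpha$ containing the image of the next generator, apply the Kaplansky property to $F/F_\alpha \in \F$ to get a $<\kappa$-generated $Y'$ with $X' \subseteq Y' \subseteq F/F_\alpha$ and $Y', (F/F_\alpha)/Y' \in \F$, and let $F_{\alpha+1}$ be the preimage of $Y'$ in $F$; at a limit step take unions, using $F/F_\lambda \cong \colim_{\beta<\lambda} F/F_\beta \in \F$ to keep the induction running. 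Each factor $F_{\alpha+1}/F_\alpha \cong Y'$ lies in $\clS$, absorbing the generators forces $F_\mu = F$, and continuity is built in, so this is an $\clS$-filtration of $F$ and $F \in \Filt{\clS}$.

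The main obstacle will be the first implication: one must extract from the Generalized Hill Lemma not merely that $\clL$ is closed under the relevant sums, intersections, and quotients, but the quantitative statement that every small subobject is swallowed by a small member of $\clL$, and then pin down how large $\kappa$ must be in terms of the filtration length and the sizes of the factors in $\clS$. In the converse the delicate points are verifying that $\clS$ is a set, that the reverse inclusion really needs only closure under extensions and direct limits, and that the generator-absorbing bookkeeping guarantees the chain exhausts $F$ rather than stalling.
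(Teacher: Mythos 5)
Your proof is correct and follows essentially the same route as the paper: part (1) is exactly the paper's application of Theorem~\ref{thm:generalized hill lemma}(H4) and (H3) together with Corollary~\ref{cor:small objects closed under small filtrations} (note these deliver that $Y=\ell(S)$ is $<\kappa$-\emph{presentable}, which is what Definition~\ref{def:kaplansky} demands, not merely $<\kappa$-generated as you wrote), and part (2) is the standard transfinite construction that the paper delegates to Gillespie's Lemma 4.3. The only cosmetic fix is to take $\clS$ in part (2) to be a representative set of the $<\kappa$-\emph{presentable} objects of $\F$, since those are the filtration factors the Kaplansky property actually produces.
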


Next, deconstructible classes are closed under some natural constructions:

\begin{propAddCl}
The following assertions hold for any Grothendieck category $\G$:
\begin{enumerate}
\item If $\E$ is a deconstructible class in $\G$ and $\F$ is the class consisting of all direct summands of objects from $\E$, then $\F$ is deconstructible, too.

\item The intersection of any set indexed collection of deconstructible classes is deconstructible again.
\end{enumerate}
\end{propAddCl}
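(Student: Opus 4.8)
Throughout I write $\Filt{\clS}$ for the class of all transfinite extensions (filtrations) of a set $\clS$, so that deconstructibility means being of the form $\Filt{\clS}$ for some \emph{set} $\clS$. In both parts the strategy is the same: exhibit a candidate set $\T$ of \emph{small} objects of the class in question, observe that the class is closed under transfinite extensions (which yields $\Filt{\T}\subseteq\F$ for free whenever $\T\subseteq\F$), and then invoke the Generalized Hill Lemma (Theorem~\ref{thm:generalized hill lemma}) to filter an arbitrary member by small pieces, giving the reverse inclusion. I note that one cannot shortcut either part through the Kaplansky corollary~\ref{cor:dec_kapl}, since neither the class of summands nor an intersection of deconstructible classes need be closed under direct limits.

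For part (1), fix $\clS$ with $\E=\Filt{\clS}$. First I would record the elementary \emph{complement trick}: if $G$ carries a filtration whose consecutive factors $F_\alpha$ are direct summands of objects $E_\alpha\in\E$, say $E_\alpha\cong F_\alpha\oplus C_\alpha$, then setting $N_\alpha=M_\alpha\oplus\bigoplus_{\beta<\alpha}C_\beta$ (where $(M_\alpha)$ filters $G$) equips $G\oplus\bigoplus_\alpha C_\alpha$ with a filtration having factors $E_\alpha\in\E$, whence it lies in $\Filt{\clS}=\E$; thus $G$ is a summand of an object of $\E$ and $G\in\F$. This shows $\F$ is closed under transfinite extensions, so $\Filt{\T}\subseteq\F$ for any $\T\subseteq\F$. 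It remains to filter an arbitrary $F\in\F$ by small objects of $\F$. Writing $E=F\oplus F'\in\E$ with projection $\pi\colon E\to F$, I would fix a filtration witnessing $E\in\Filt{\clS}$ and pass to the Hill family $\clH$ of subobjects of $E$ from Theorem~\ref{thm:generalized hill lemma}. The key is to build a continuous chain of $\pi$-\emph{invariant} small members of $\clH$: starting from a small $H\in\clH$, alternately adjoin $\pi(H)$ and $(1-\pi)(H)$ and re-capture the result into a small member of $\clH$, iterating $\omega$ times and taking the union; closure of $\clH$ under arbitrary sums makes this union a member of $\clH$, and it is $\pi$-invariant and still small. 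For such $H$ one has $H=(H\cap F)\oplus(H\cap F')$, so $H\cap F$ is a summand of $H\in\E$, hence $H\cap F\in\F$; and for $H\subseteq H'$ in the chain the factor $(H'\cap F)/(H\cap F)$ is a summand of $H'/H\in\E$, hence lies in $\F$ and is small. Running the construction transfinitely (using that intersection commutes with directed unions) so that the subobjects $H\cap F$ exhaust $F$ exhibits $F$ as a transfinite extension of small objects of $\F$. Taking $\T$ to be a set of representatives of the small objects of $\F$ finishes part (1).

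For part (2), let $(\F_i)_{i\in I}$ be set-indexed with $\F_i=\Filt{\clS_i}$. Since deconstructible classes are closed under transfinite extensions, so is $\bigcap_i\F_i$, and again $\Filt{\T}\subseteq\bigcap_i\F_i$ for $\T\subseteq\bigcap_i\F_i$. For the converse I would fix a regular cardinal $\kappa$ simultaneously bounding the sizes of all $\clS_i$ and satisfying $\kappa>\card{I}$. Given $M\in\bigcap_i\F_i$, choose for each $i$ a filtration witnessing $M\in\Filt{\clS_i}$ and the associated Hill family $\clH_i$. The crucial point is a back-and-forth argument producing small members of $\bigcap_i\clH_i$: to enlarge a small subobject, perform $\omega$ rounds, each round capturing the current subobject successively into a small member of $\clH_i$ for every $i\in I$; there are fewer than $\kappa$ capture steps in total, so by regularity the $\omega$-union stays small, and since each $\clH_i$ is closed under unions of chains it lands in every $\clH_i$. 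Iterating this transfinitely filters $M$ by small members of $\bigcap_i\clH_i$, and the quotient property of the Hill families forces each consecutive factor into every $\F_i$, i.e. into $\bigcap_i\F_i$. Hence $\bigcap_i\F_i=\Filt{\T}$ for $\T$ a set of representatives of its small members.

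\textbf{Main obstacle.} The delicate step in both parts is to enlarge a small subobject to a small member of an auxiliary lattice (the $\pi$-invariant members of $\clH$ in part (1), the members of $\bigcap_i\clH_i$ in part (2)) \emph{without losing smallness}. This is exactly what the $\omega$-round back-and-forth delivers, and it rests on two features of the Hill Lemma that must be applied with care: closure of each $\clH$ under arbitrary sums, so that $\omega$-unions remain inside the lattice, and the capture property, so that each round enlarges by only a small, controlled amount. In part (2) the additional requirement that $\kappa$ be regular and strictly above $\card{I}$ is precisely what guarantees that a full round of $\card{I}$ captures does not exhaust the regularity of $\kappa$.
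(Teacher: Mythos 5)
Your argument is correct and is essentially the paper's own proof: for (1) the paper likewise passes to $Z=X\oplus Y$, uses the Hill family $\clH$ and an $\omega$-step capture to build a filtration by subobjects satisfying $Z_\alpha=\pi_X(Z_\alpha)+\pi_Y(Z_\alpha)$ (your $\pi$-invariance), and for (2) it uses exactly your back-and-forth over a limit ordinal of type $\omega\times\card{I}$ hitting each $\clH_i$ cofinally. The only detail to make explicit is that $\kappa$ must be chosen \emph{uncountable} (as the paper assumes), since otherwise the union of $\omega$ many $<\kappa$-presentable increments need not remain $<\kappa$-presentable.
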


\smallskip

A central requirement for homological algebra and many applications, such as those mentioned above, is existence of approximations. Given a full subcategory $\F$ of a category $\G$, we say that a morphism $f: F \to X$ is an \emph{$\F$-precover} of $X$ if $F \in \F$ and any other morphism $f': F' \to X$ with $F' \in \F$ factors through $f$. The class $\F$ is called \emph{precovering} if each $X \in \G$ admits an $\F$-precover. \emph{Preenvelopes} and \emph{preenveloping classes} are defined dually. Now we have:

\begin{thmGeneric} \emph{(\cite[2.14]{SaSt} and proof of~\cite[4.19]{SaSt})}
Let $\F$ be a deconstructible class in a Grothendieck category. Then $\F$ is precovering. If in addition $\F$ is closed under products, it is also preenveloping.
\end{thmGeneric}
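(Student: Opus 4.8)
The plan is to reduce both claims to a \emph{solution set condition} and then to assemble the approximation as a coproduct, in the precover case, or as a product, in the preenvelope case. Write $\F=\Filt{\mathcal{S}}$ for a set $\mathcal{S}$ of objects. First I record the one closure property that is immediate from deconstructibility: a coproduct of $\mathcal{S}$-filtered objects is again $\mathcal{S}$-filtered, so $\F$ is closed under arbitrary coproducts; for the preenvelope statement I will use in addition the hypothesis that $\F$ is closed under products. Since $\G$ is Grothendieck it has a generator, giving a reasonable notion of size $\card{\cdot}$ for objects, and every subobject is the directed union of its small subobjects — the mild point-set facts the argument rests on.

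For preenvelopes the argument is short and rests on Corollary~\ref{cor:dec_kapl}. Fix $X$ and, using that $\F$ is $\kappa$-Kaplansky for arbitrarily large regular $\kappa$, choose such a $\kappa$ with $\card{X}<\kappa$. Given any $h\colon X\to F$ with $F\in\F$, the subobject $\Img h\subseteq F$, being a quotient of $X$, has size $<\kappa$, so the Kaplansky property yields $E\in\F$ with $\Img h\subseteq E\subseteq F$ and $\card{E}<\kappa$; thus $h$ factors as $X\to E\hookrightarrow F$. Up to isomorphism there is only a set of objects of $\F$ of size $<\kappa$ and only a set of morphisms from $X$ into each, so we obtain a set $\{e_j\colon X\to E_j\}_{j\in J}$ such that every such $h$ factors through some $e_j$. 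The product $E:=\prod_{j\in J}E_j$ lies in $\F$ by hypothesis, and the induced map $X\to E$ is an $\F$-preenvelope: any $h\colon X\to F$ factors through the relevant projection $E\to E_j$ followed by $E_j\hookrightarrow F$.

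For precovers I would run the dual argument, but now the required factorization of a morphism $g\colon F\to X$ is through a \emph{quotient} of the source, and producing it is the crux. Concretely I want, for a cardinal $\lambda$ depending only on $\card{X}$ and $\mathcal{S}$, a factorization $F\twoheadrightarrow F'\to X$ with $F'\in\F$ and $\card{F'}\le\lambda$; granting this for every $g$, the coproduct $P:=\coprod_i F_i\to X$ over a set of representatives of bounded $\F$-objects over $X$ is an $\F$-precover, by the same bookkeeping as above together with closure under coproducts. The difficulty is that the Kaplansky property delivers bounded \emph{sub}objects, not bounded quotients, so it does not suffice here. Instead one invokes the full Generalized Hill Lemma (Theorem~\ref{thm:generalized hill lemma}): fixing an $\mathcal{S}$-filtration of $F$, it produces a family $\mathcal{L}$ of subobjects, closed under sums and intersections, all lying in $\F$ and with every subquotient $M'/M$ (for $M\subseteq M'$ in $\mathcal{L}$) again $\mathcal{S}$-filtered. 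One then seeks $M\in\mathcal{L}$ with $M\subseteq\Ker g$ and $F/M$ of bounded size; as $F/M$ is the quotient of the top member $F\in\mathcal{L}$ by a member, it automatically lies in $\F$. I expect this to be the main obstacle: simply taking $M$ to be the largest member of $\mathcal{L}$ below $\Ker g$ need not make $\Ker g/M$, and hence $F/M$, small, so the bound must be forced by a more careful transfinite closure that uses the lattice structure of $\mathcal{L}$ to keep the complement of $M$ controlled by the image $\Img g\subseteq X$. This bounded-quotient construction is exactly the content isolated in \cite[2.14]{SaSt}; once it is in place, the assembly of the precover is routine.
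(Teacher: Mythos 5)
Your preenvelope argument is correct and is, as far as I can tell, the intended one; note that for this theorem the paper itself supplies no proof but defers entirely to \cite{SaSt}, so there is nothing internal to compare against. Factoring each $h\colon X\to F$ through a $<\kappa$-presentable member of $\F$ interposed between $\Img h$ and $F$ via Corollary~\ref{cor:dec_kapl}, collecting a representative set (legitimate, since $<\kappa$-presentable objects form a set up to isomorphism by Lemma~\ref{lem:identifying lambda presented}), and using closure under products is exactly the standard assembly. Just make explicit that $\kappa$ is chosen large enough that $X$, hence $\Img h$, is $<\kappa$-generated.

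The precover half has a genuine gap, which you partly flag yourself and then delegate back to the reference; the delegated step is the entire content of the claim, and the mechanism you propose for it does not work. You want $M=\ell(S)$ in the Hill lattice with $M\subseteq\Ker g$ and $F/M$ of bounded size. But the subobjects $\ell(S)$ are sums of the fixed generators $A_\alpha$ of the chosen filtration, and $\Ker g$ need not contain a single one of them even when $\Img g$ is tiny: take $\G=\Mod{\Z}$, $\clS=\{\Z\}$, $F=\Z^{(\omega_1)}$ with the coordinate filtration (so $A_\alpha=\Z e_\alpha$, every subset of $\omega_1$ is closed, and $\ell(S)=\Z^{(S)}$), and let $g\colon F\to\Z/2\Z$ send every $e_\alpha$ to $1$. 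Then $\ell(S)\subseteq\Ker g$ forces $S=\emptyset$, so no choice of $M$ in the lattice bounds $F/M$, although the desired factorization does exist ($g$ factors through $e_\alpha\mapsto 1\colon F\twoheadrightarrow\Z$) --- its kernel is simply invisible to $\clL$. No ``more careful transfinite closure'' inside $\clL$ repairs this. The proof in \cite[2.14]{SaSt} does not run a solution-set argument on the sources at all: roughly, one forms $t\colon T\to X$ with $T\in\Filt\clS$ whose image is the trace of $\F$ in $X$ (possible because $\Subobj(X)$ is a set), applies the Eklof--Trlifaj/small-object-argument construction to embed $\Ker t$ into some $C\in\clS^\perp$ with cokernel in $\Filt\clS$, and pushes out to get a conflation $C\rightarrowtail G\twoheadrightarrow\Img t$ with $G\in\Filt\clS$; Eklof's lemma, i.e.\ $\Filt\clS\subseteq{}^{\perp}(\clS^\perp)$, then lifts every map from $\F$ to $X$ through $G\to X$. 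It is this $\Ext$-vanishing, not a bounded-quotient factorization, that produces the precover, and your proposal contains no substitute for it.
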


\smallskip

For the results by Gillespie, Neeman and Murfet mentioned above, one needs certain classes of chain complexes of quasi-coherent sheaves to be precovering or preenveloping, which one proves via deconstructibility. To start with, many classes of modules are well-known to be deconstructible (consult~\cite{GT}).
If one wishes to deal with sheaves, good news coming from~\cite{EGPT} is that one can prove deconstructibility of a class of sheaves locally:

\begin{thmGeneric} \emph{(\cite[3.7 and (proof of) 3.8]{EGPT})} \label{thm:sheaves}
Assume that $\mathbb{X}$ is a scheme with the structure sheaf $\mathcal{O}$, and let $V$ be a family of open sets which covers $\mathbb{X}$ as well as all intersections $u \cap v$ for each $u,v \in V$. Assume further that we are given for each $v \in V$ a deconstructible class $\F_v \subseteq \Mod{\mathcal{O}(v)}$. Then the class of quasi-coherent sheaves defined as
$ \F = \{\mathcal{X} \in \Qco{\mathbb{X}} \mid \mathcal{X}(v) \in \F_v \textrm{ for each } v \in V \} $
is deconstructible in $\Qco{\mathbb{X}}$.
\end{thmGeneric}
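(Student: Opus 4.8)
The plan is to exhibit a set $\clS$ of quasi-coherent sheaves for which $\F = \Filt{\clS}$, building the required filtrations section-by-section from the given local data and gluing them with the help of the Generalized Hill Lemma (Theorem~\ref{thm:generalized hill lemma}). Throughout I would use that each $v \in V$ is affine, so that the section functor $\Gamma(v,-) = (-)(v) \colon \Qco{\mathbb{X}} \to \Mod{\mathcal{O}(v)}$ is exact and commutes with direct limits; this is the only geometric input and it is routine. I would first fix a regular cardinal $\kappa$ large enough that $\card{V} < \kappa$, that the objects of a fixed generating set $\clS_v$ with $\F_v = \Filt{\clS_v}$ are $<\kappa$-presented for every $v$, and that the chosen generator of $\Qco{\mathbb{X}}$ is $<\kappa$-presented; such a $\kappa$ exists since $V$ is a set, and by Corollary~\ref{cor:dec_kapl} each $\F_v$ is then in particular $\kappa$-Kaplansky. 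Finally I would let $\clS$ be a set of representatives of the $<\kappa$-presented objects lying in $\F$.

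The inclusion $\Filt{\clS} \subseteq \F$ is the easy half: if $\mathcal{X}$ is filtered by subsheaves with consecutive quotients in $\F$, then applying the exact, colimit-preserving functor $(-)(v)$ produces a filtration of $\mathcal{X}(v)$ with consecutive quotients in $\F_v$, whence $\mathcal{X}(v) \in \F_v$ because $\F_v$ is closed under transfinite extensions. The substantive direction is $\F \subseteq \Filt{\clS}$. Here, given $\mathcal{X} \in \F$, I would apply the Generalized Hill Lemma to each module $\mathcal{X}(v) \in \F_v$, with respect to a chosen $\clS_v$-filtration, to obtain for every $v$ a complete sublattice $\clL_v$ of the subobject lattice of $\mathcal{X}(v)$ which is closed under arbitrary sums and intersections, whose $<\kappa$-presented members are cofinal among the $<\kappa$-presented submodules, and such that every member of $\clL_v$ and every relative quotient of two comparable members again lies in $\F_v$. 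These lattices are the device that controls both membership and relative quotients simultaneously.

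Using the $\clL_v$ I would then construct a continuous increasing chain $(Y_\alpha)$ of quasi-coherent subsheaves exhausting $\mathcal{X}$, arranged so that $Y_\alpha(v) \in \clL_v$ for all $\alpha$ and $v$ and so that each quotient $Y_{\alpha+1}/Y_\alpha$ is $<\kappa$-presented. The successor step is itself a small $\omega$-indexed back-and-forth: starting from $Y_\alpha$ together with one more generator of $\mathcal{X}$, I would repeatedly enlarge the sections over each $v \in V$ to a $<\kappa$-presented member of $\clL_v$, possible by cofinality, and pass to the quasi-coherent subsheaf they generate, looping over all $v$ until the process closes up; taking the union produces $Y_{\alpha+1}$ with $Y_{\alpha+1}(v) = \bigcup Z_v^{(n)} \in \clL_v$ by union-closure. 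Regularity of $\kappa$ together with $\card{V} < \kappa$ keeps everything $<\kappa$-presented, and closure of the $\clL_v$ under sums and intersections makes the passages through limit ordinals harmless. By construction $Y_{\alpha+1}(v)/Y_\alpha(v)$ is a relative quotient inside $\clL_v$, hence in $\F_v$, so $Y_{\alpha+1}/Y_\alpha \in \F$ and is $<\kappa$-presented, i.e. isomorphic to an object of $\clS$; thus $(Y_\alpha)$ witnesses $\mathcal{X} \in \Filt{\clS}$.

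The main obstacle is exactly the successor-step gluing: a compatible choice of submodules $Z_v \subseteq \mathcal{X}(v)$ need not assemble into a quasi-coherent subsheaf whose $v$-sections are again $Z_v$, since generators introduced over one $u \in V$ restrict nontrivially into the others. The back-and-forth over the at most $\kappa$-many opens, together with the fact that the Hill lattices are closed under the directed unions taken at the end of this loop, is what makes the local enlargements cohere into a global subsheaf while preserving both smallness and membership in $\F$. This is where the Generalized Hill Lemma does the real work, the $\kappa$-Kaplansky property alone being insufficient because it would not survive these unions.
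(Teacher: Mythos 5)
The paper does not actually prove this statement itself---it quotes it from \cite{EGPT} with a reference to 3.7 and (the proof of) 3.8 there---so there is no internal proof to compare against; judged on its own, your proposal reconstructs essentially the intended argument. Applying the Hill Lemma to each $\mathcal{X}(v)$ and running a back-and-forth over the fewer than $\kappa$ opens of $V$ to build a continuous chain of quasi-coherent subsheaves whose $v$-sections stay inside the Hill lattices is exactly the technique of \cite{EGPT}, and it is also the same device the paper itself uses to prove Proposition~\ref{prop:add_deconstr}(2), so the approach is the right one. The one ingredient you assert rather than prove is the scheme-theoretic lemma that the quasi-coherent subsheaf of $\mathcal{X}$ generated by prescribed $<\kappa$-generated submodules $Z_v \subseteq \mathcal{X}(v)$ exists and again has $<\kappa$-generated sections over every $u \in V$; this is precisely where the affineness of the opens (needed for exactness and colimit-preservation of $(-)(v)$, and tacit in the statement) and the hypothesis that $V$ covers all intersections $u \cap v$ are used, and it is what \cite{EGPT} supplies---granting it, your argument is sound.
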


The passage to complexes is also made straightforward by the Hill Lemma. Given a Grothendieck category $\G$ and a deconstructible class $\F \subseteq \G$, various classes of complexes constructed from $\F$ are automatically deconstructible in $\Cpx\G$. Here we use Notation~\ref{not:F tilde and dg-F}, originating in~\cite{G3}:

\begin{thmCpxs}
The following hold for any Grothendieck category $\G$ and a deconstructible class $\F \subseteq \G$:
\begin{enumerate}
\item $\Cpx\F$ is deconstructible in $\Cpx\G$. 

\item $\tilde\F$ is deconstructible in $\Cpx\G$. 

\item If $\F$ is a generating class in $\G$, then $\dg\F$ is deconstructible in $\Cpx\G$. 
\end{enumerate}
\end{thmCpxs}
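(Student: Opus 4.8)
The plan is to fix once and for all a set $\clS$ of objects of $\G$ with $\F=\Filt\clS$, together with a regular cardinal $\kappa$ large enough that the Generalized Hill Lemma (Theorem~\ref{thm:generalized hill lemma}) applies to $<\kappa$-generated subobjects. For each of the three classes I would exhibit an explicit \emph{set} $T$ of ``small'' complexes lying in the class in question and prove that every complex of the class is a transfinite extension of members of $T$; this is exactly deconstructibility. The common engine is to run the Hill Lemma in each degree and then to reconcile the resulting closure systems with the differential, exploiting the key feature of the lemma that its closure system on $X_n$ absorbs every $<\kappa$-generated subobject into a $<\kappa$-generated member.

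For (1) I would take $T$ to be a set of representatives of the $<\kappa$-generated complexes with all terms in $\F$. Given $X\in\Cpx\F$, apply the Hill Lemma to each term $X_n$ to obtain a closure system $\clL_n$ of subobjects, and build a small subcomplex $Y\subseteq X$ as follows: start from a $<\kappa$-generated graded subobject, close it off in each degree inside $\clL_n$, enlarge it to absorb the images under the differential (again staying inside the $\clL_n$), and iterate $\omega$ times, taking the union. Closure of each $\clL_n$ under sums makes the union lie in $\clL_n$, while the absorption step makes it stable under the differential, so $Y$ is a subcomplex with $Y_n\in\F$ and $X_n/Y_n\in\F$; thus $Y,X/Y\in\Cpx\F$, and a transfinite iteration exhausting $X$ yields the $T$-filtration. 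I emphasize that one cannot simply invoke Corollary~\ref{cor:dec_kapl} here, since $\F$, hence $\Cpx\F$, need not be closed under direct limits; the filtration must be built by hand.

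For (2) I would first note $\tilde\F\subseteq\Cpx\F$, since each term $X_n$ of an exact complex is an extension of $Z_{n-1}(X)$ by $Z_n(X)$ and $\F$ is closed under extensions. I then set $T=\{D^n(s)\mid s\in\clS,\ n\in\Z\}$, where $D^n(s)$ is the disk with $s$ in degrees $n$ and $n-1$ and identity differential, and claim $\tilde\F=\Filt T$; as $T$ is a set this settles the case. The inclusion $\Filt T\subseteq\tilde\F$ holds because disks lie in $\tilde\F$ and $\tilde\F$ is closed under transfinite extensions (filtered colimits and extensions of exact complexes are exact and induce short exact sequences on cycles, which therefore stay in $\F$). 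For the reverse inclusion I would apply the Hill Lemma to the cycles $Z_n(X)\in\F$: adding a single $s\subseteq Z_{n-1}(X)$ along such a filtration corresponds to splicing a disk $D^n(s)$ into $X$, the copy of $s$ in degree $n$ being a lift along the epimorphism $X_n\twoheadrightarrow Z_{n-1}(X)$. The role of the Hill closure systems is precisely to make these lifts compatible across degrees so that the consecutive factors are single disks.

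Case (3) is where I expect the real difficulty. Recall from Notation~\ref{not:F tilde and dg-F} that $X\in\dg\F$ \iff its terms lie in $\F$ and every chain map $X\la W$ is null-homotopic for each exact complex $W$ with cycles in $\F^{\perp}$, where $\F^{\perp}=\{M\in\G\mid\ExtG(F,M)=0\ \forall F\in\F\}$. Using that $\F$ generates $\G$, I fix a generating set $G\subseteq\F$ and put $T=\{D^n(s)\mid s\in\clS\}\cup\{S^n(g)\mid g\in G\}$, where $S^n(g)$ is the stalk with $g$ in degree $n$. All members of $T$ lie in $\dg\F$: the disks even lie in $\tilde\F$, while for a stalk one checks that $\Hom_\G(g,W_\bullet)$ is exact for $W$ exact with cycles in $\F^{\perp}$, splicing the short exact sequences of cycles and using that $\ExtG(g,-)$ vanishes on $\F^{\perp}$. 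The inclusion $\Filt T\subseteq\dg\F$ then follows from closure of $\dg\F$ under transfinite extensions (Eklof's lemma for the orthogonality condition, together with the argument of (1) for the terms). The hard part is the converse: every $X\in\dg\F$ must admit a filtration whose factors are disks and stalks of generators, with the stalks of $G$ supplying the ``homology directions'' and the disks the ``exact directions'', the generating hypothesis guaranteeing that enough stalks are available to account for the non-exact part of $X$. The genuine obstacle is that the defining condition of $\dg\F$ is homological and cannot be tested degreewise, so one must verify that it is inherited by the small subcomplexes produced by the Hill construction; reconciling the degreewise Hill closure systems with this global condition, with the generating set mediating, is the technical heart of the theorem.
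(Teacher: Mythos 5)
Your treatment of (1) is essentially the paper's own argument (Proposition~\ref{prop:deconstr_C(F)}): run the Hill Lemma in each degree, enlarge degreewise while staying inside the closure systems, and absorb images of the differential; the only cosmetic difference is that the paper's filtration factors are bounded below complexes with components in $\clS$ rather than $<\kappa$-generated complexes. The problems begin with (2). Your claim that $\tilde\F=\Filt{\{D^n(s)\mid s\in\clS,\ n\in\Z\}}$ is false. The first nonzero term of any such filtration would be a subcomplex of $X$ isomorphic to a disk, i.e.\ a subobject $s'\subseteq X^{n-1}$ on which $d^{n-1}$ is monic; equivalently, your ``lift'' of $s\subseteq Z^n(X)$ along the epimorphism $X^{n-1}\twoheadrightarrow Z^n(X)$ would have to be an \emph{isomorphic} copy of $s$, which amounts to a partial splitting of that epimorphism and is not available in general. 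Concretely, take $R=\Z/4\Z$, $\F=\Mod{R}$, and the acyclic complex $X$ with $X^n=R$ for all $n$ and every differential equal to multiplication by $2$: all cycle objects are $2R\cong\Z/2\Z\in\F$, so $X\in\tilde\F$, but multiplication by $2$ is injective on no nonzero submodule of $\Z/4\Z$, so $X$ contains no nonzero disk and hence lies outside $\Filt{\{D^n(s)\}}$. This is exactly why Proposition~\ref{prop:deconstr_tildeF} uses as filtration factors bounded \emph{above} complexes in $\tilde\F$ with $<\kappa$-presentable components, built by descending through the degrees and taking controlled preimages of cycles (via Lemma~\ref{lem:size of kernels} and the Hill closure systems on the $Z^n(X)$), rather than disks.

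For (3) you have correctly located the hard direction but not supplied it, and the statement you aim at is too strong: an object of $\dg\F$ is in general only a \emph{direct summand} of a complex filtered by stalks $S[n]$, not itself so filtered. The paper does not attack this by a Hill-type construction at all. Instead it proves two orthogonality identities: $\tilde\C=\{Y\mid\Ext^1_{\Cpx\G}(Q,Y)=0 \text{ for all } Q\in\Q\}$ for $\Q$ the set of stalk complexes $S[n]$ with $S\in\clS$, and $\dg{\F'}=\{X\mid\Ext^1_{\Cpx\G}(X,Y)=0 \text{ for all } Y\in\tilde\C\}$, where $\F'$ is the class of summands of objects of $\F$; the comparison of $\Ext^1_{\Cpx\G}$ with $\Ext^1_\CGcs$ via mapping cones (Lemma~\ref{lem:stable_hom}) is what converts the null-homotopy condition defining $\dg\F$ into an $\Ext$-vanishing condition. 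Since $\Q$ generates $\Cpx\G$ when $\F$ generates $\G$, Proposition~\ref{prop:cotorsion_and_filt} identifies $\dg{\F'}$ with the class of summands of $\Filt\Q$; deconstructibility of $\dg{\F'}$ then follows from closure of deconstructible classes under summands (Proposition~\ref{prop:add_deconstr}(1)), and $\dg\F=\Cpx\F\cap\dg{\F'}$ is handled by closure under set-indexed intersections together with part (1). Without this detour through the cotorsion-pair machinery your plan has no mechanism for producing the required filtration, and the ``technical heart'' you defer is precisely the content that remains to be invented.
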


\smallskip

Finally, we mention a general structure result which may be of interest by itself. It was observed (though not yet published at the time of writing of this paper) by Enochs for certain deconstructible classes and used to give an alternative proof that these classes were precovering. The result roughly says that we can in some sense limit the length of the transfinite extensions (see \S\ref{sec:bounding length} for more explanation). Here, we denote by $\Sum\clS$ the class of all coproducts of copies of objects in $\clS$.

\begin{thmBoundingLength}
Let $\G$ be a Grothendieck category, $\clS$ a set of objects and $\F$ the closure of $\clS$ under transfinite extensions. Then there exists an infinite regular cardinal $\kappa = \kappa(\G,\clS)$ \st any $X \in \F$ is a transfinite extension of objects from $\Sum\clS$ of length $\le\kappa$.
\end{thmBoundingLength}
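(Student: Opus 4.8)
The plan is to produce $\kappa$ from the presentability rank of $\G$ together with $\clS$, and then to realise the required short filtration as a ``$\Sum\clS$-socle series'' read off from the Generalized Hill Lemma. First I would fix a generator of $\G$ and choose a regular cardinal $\kappa$ so large that $\G$ is locally $\kappa$-presentable, every member of $\clS$ is $\kappa$-presentable, and $\card{\clS}<\kappa$; manifestly such a $\kappa$ exists and depends only on $\G$ and $\clS$. Given $X\in\F$, I fix any transfinite extension $(X_\alpha)_{\alpha\le\sigma}$ of objects of $\clS$ with $X_\sigma=X$ and let $\clL$ be the family of subobjects provided by Theorem~\ref{thm:generalized hill lemma}: a complete sublattice of the lattice of subobjects of $X$, closed under arbitrary sums and intersections, containing every $X_\alpha$, with $P/N\in\F$ for all $N\subseteq P$ in $\clL$, and with the intersection law $X_{T}\cap X_{T'}=X_{T\cap T'}$ for the lattice indexing that I exploit below.

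Next I would build a continuous chain $(Y_\beta)_\beta$ inside $\clL$ by the recursion $Y_0=0$, unions at limits, and $Y_{\beta+1}$ equal to the join of $Y_\beta$ with all of its \emph{atoms}, i.e.\ with every $P\in\clL$ such that $Y_\beta\subseteq P$ and $P/Y_\beta\in\clS$. Working inside $\clL$ has two payoffs. Since $X/Y_\beta\in\F$, as long as $Y_\beta\neq X$ there is a nonzero atom and the chain strictly ascends; and here is where the Hill lemma does the real algebra: the intersection law forces the distinct atoms over $Y_\beta$ to be independent, so that $Y_{\beta+1}/Y_\beta$ is literally their coproduct and hence an object of $\Sum\clS$. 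Thus $(Y_\beta)$ is a transfinite extension by objects of $\Sum\clS$, and everything reduces to bounding its length by $\kappa$.

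For the bound I would combine a local-to-global principle with the smallness of $\clS$. First, because each $S\in\clS$ is $\kappa$-presentable, every morphism from an object of $\clS$ into a $\kappa$-directed union taken inside $\clL$ factors through a stage; hence passing to atoms, and by transfinite induction the whole operation $X\mapsto Y_\beta$, commutes with such unions. Second, by Theorem~\ref{thm:generalized hill lemma} every $\kappa$-presentable subobject of $X$ embeds in a $\kappa$-presentable member of $\clL$, so the $\kappa$-presentable members of $\clL$ form a $\kappa$-directed family with union $X$; and one checks that any $\kappa$-presentable $P\in\F$ carries an $\clS$-filtration of length $<\kappa$, whence its atom-series is a coarsening of that filtration and terminates before $\kappa$. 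Consequently $Y_\kappa(P)=P$ for every such $P$, and commutation with the $\kappa$-directed union yields $Y_\kappa(X)=\li_P Y_\kappa(P)=\li_P P=X$. Therefore $(Y_\beta)_{\beta\le\kappa}$ has length $\le\kappa$, as desired.

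The step I expect to be the main obstacle is the local-to-global passage: one must verify that the atom-adjoining operation is genuinely computed stagewise, i.e.\ that it commutes with $\kappa$-directed unions \emph{through} the Hill family $\clL$ rather than merely with unions in $\G$. This forces one to coordinate the single Hill family of $X$ with the construction on its $\kappa$-presentable $\clL$-subobjects---equivalently, to check that atoms are detected on $\kappa$-presentable stages---and it is precisely here that the $\kappa$-presentability of $\clS$ and the lattice identities of $\clL$ must be used in tandem. By comparison, verifying that each factor $Y_{\beta+1}/Y_\beta$ is an honest coproduct is routine once the independence of atoms, built into the Hill lemma, is in hand.
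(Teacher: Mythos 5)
Your overall strategy---use the Generalized Hill Lemma to re-stratify a given $\clS$-filtration into $\le\kappa$ layers, each layer being a coproduct of objects of $\clS$---is the right one, and it is the strategy of the paper. But two steps of your argument have genuine problems. First, the independence of your ``atoms'' is false as stated. You take \emph{all} $P\in\clH$ with $Y_\beta\subseteq P$ and $P/Y_\beta\in\clS$, and claim the intersection law forces them to be independent over $Y_\beta$. The intersection law only gives $\ell(S_P)\cap\ell(S_Q)=\ell(S_P\cap S_Q)$, and there is no reason for $S_P\cap S_Q$ to be contained in the index set of $Y_\beta$. Concretely, take $\G=\Mod{k}$, $\clS=\{k,k^2\}$, $X=k^3$ with the coordinate filtration and $A_\alpha=ke_\alpha$; then $\clL=\Pow(3)$, and over $Y_0=0$ both $\ell(\{0,1\})$ and $\ell(\{1,2\})$ are atoms in your sense with intersection $ke_1\ne 0$, so $Y_1$ is their sum but not their coproduct. (Restricting to \emph{minimal} elements of $\clH$ above $Y_\beta$ would restore independence via the lattice identities, but that is not what you wrote and would need its own verification that the minimal factors lie in $\clS$ and that minimal atoms exist in sufficient supply.) Second, and more seriously, the length bound---which is the entire content of the theorem---is exactly the step you defer. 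The proposed commutation $Y_\beta(X)\cap P=Y_\beta(P)$ for $<\kappa$-presentable $P\in\clH$ breaks at successor steps: by the intersection law $Y_{\beta+1}(X)\cap P=\sum_Q(Q\cap P)$ over global atoms $Q$, and $Q\cap P$ is merely a subobject of $P$ with $(Q\cap P)/Y_\beta(P)$ embedding into $Q/Y_\beta(X)\in\clS$; nothing makes it a sum of atoms of $P$. So the local-to-global passage you flag as the main obstacle is not a technicality to be checked but a gap that this route does not close.

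The paper avoids the dynamic closure process entirely and gets the bound $\kappa$ for free from regularity. Using (H4) one fixes for each $\alpha<\sigma$ a closed set $S_\alpha\in\clL$ with $\alpha\in S_\alpha\subseteq\alpha+1$, $\card{S_\alpha}<\kappa$ and $A_\alpha\subseteq\ell(S_\alpha)$, and then defines a \emph{level function} $\lev\colon\sigma\to\kappa$ by the well-founded recursion $\lev(\alpha)=\sup\{\lev(\gamma)+1\mid\gamma<\alpha,\ \gamma\in S_\alpha\}$; this lands in $\kappa$ precisely because $\kappa$ is regular and each $S_\alpha$ has cardinality $<\kappa$, which is where the bound comes from---no approximation by $<\kappa$-presentable stages is needed. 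Setting $T_\beta=\{\gamma\mid\lev(\gamma)<\beta\}$ and $X'_\beta=\ell(T_\beta)$ gives the filtration of length $\kappa$, and the independence of the level-$\beta$ pieces over $X'_\beta$ is proved from the lattice identities together with the combinatorial fact that for distinct $\gamma,\delta$ of level $\beta$ one has $S_\gamma\cap S_\delta\subseteq T_\beta$ (the only element of $S_\gamma$ of level $\ge\beta$ is $\gamma$ itself). In effect the paper computes your strata in advance from the dependency relation $\gamma\in S_\alpha$ rather than discovering them by transfinite closure; if you want to salvage your approach, you should look for a statically defined invariant of this kind rather than trying to prove the commutation with $\kappa$-directed unions.
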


After giving (hopefully) motivating relations of deconstructible classes to some remarkable recent work of other authors, we are going to prove in the rest of the paper the results above for which we did not give a reference.

\subsection*{Acknowledgments}

This work was closely related to and stimulated by work on~\cite{SaSt}. I would like to thank Manuel Saor\'\i{}n, my co-author in~\cite{SaSt}, for useful discussions and especially for suggesting Proposition~\ref{prop:deconstr_tildeF}. I would also like to thank Jan Trlifaj for communicating the unpublished result by Enochs in \S\ref{sec:bounding length} and for stimulating discussions.

\section{Preliminaries}
\label{sec:prelim}

In this paper, we reserve the notation $\G$ for a Grothendieck category, that is, an abelian category with exact direct limits and a generator. Given an infinite regular cardinal $\kappa$, recall that a direct limit in $\G$ is called \emph{$\kappa$-direct} if the indexing set $I$ of the direct system is \emph{$\kappa$-directed}. That is, each subset of $I$ of cardinality $<\kappa$ has an upper bound in $I$.

An object $X \in \G$ is called \emph{$<\kappa$-presentable} if the functor $\Hom_\G(X,-): \G \to \Ab$ preserves $\kappa$-direct limits. An object $X \in \G$ is called \emph{$<\kappa$-generated} if the functor $\Hom_\G(X,-)$ preserves all $\kappa$-direct limits with all morphisms in the direct system being monomorphisms. We refer to~\cite{AR,GaU,S} or~\cite[App. A]{G1} for more information on these notions, and point out that the distinction between direct limits and filtered colimits used in the references is inessential because of~\cite[1.5]{AR}. A Grothendieck category $\G$ is called \emph{locally $<\kappa$-presentable} if it has a generating set $\clS$ consisting of $<\kappa$-presentable objects. In all the cases above, we will use the word ``finitely'' instead of ``$<\aleph_0$''.
Using this terminology, it is well-known that for a unital ring $R$ and $\G = \Mod{R}$, the notions of $<\kappa$-presentable and $<\kappa$-generated objects coincide with usual $<\kappa$-presented and $<\kappa$-generated modules, respectively (see~\cite[\S\S 24.10 and 25.2]{Wis} for $\kappa=\aleph_0$), and $\G$ is locally finitely presentable.

Given an object $X \in \G$ and monomorphisms $i: Y \to X$ and $i': Y' \to X$, we call $i$ and $i'$ equivalent if there is a (unique) isomorphism $f: Y \to Y'$ \st $i = i'f$. Equivalence classes of monomorphisms $Y \to X$ are called \emph{subobjects} of $X$ and, abusing the notation as usual, denoted by $Y \subseteq X$. If $i: Y \to X$ and $j: Z \to X$ are morphisms representing subobjects, we write $Z \subseteq Y$ if there is $f: Z \to Y$ \st $j = if$. It is well known that the set $\Subobj(X)$ of all subobjects of $X$ together with the relation $\subseteq$ forms a complete lattice. The meets and joins are called \emph{intersections} and \emph{sums} of subobjects as usual. If $(Y_i \mid i \in I)$ is a directed system of subobjects, we call their join the \emph{direct union} and denote it by $\bigcup_{i \in I} Y_i$.

For convenience of the reader, we have summarized selected properties of objects of Grothendieck categories and their lattices of subobjects in Appendix~\ref{sec:properties of Grothendieck categories}.

\smallskip

As our motivation stems from construction of suitable model structures for tackling the derived category $\Der\G$, we shall also work with categories of complexes over $\G$. We denote by $\Cpx\G$ the usual abelian category of complexes over $\G$, that is, the objects of $\Cpx\G$ are chain complexes of the form
$$
X: \qquad
\dots \la X^{n-1} \overset{d^{n-1}}\la X^n \overset{d^n}\la X^{n+1} \to \dots,
\qquad
$$
and morphisms are chain complex maps. The complex shifted by $n$ positions to the left with the signs of the differentials correspondingly changed will be denoted by $X[n]$, and the cycle, boundary and homology objects at the $n$-th component will be denoted by $Z^n(X)$, $B^n(X)$ and $H^n(X)$, respectively.
The following easy observation shows that we can stay within the framework of Grothendieck categories:

\begin{lem} \label{lem:complexes over G}
Let $\G$ be a Grothendieck category. Then the category $\Cpx\G$ is also a Grothendieck category.
\end{lem}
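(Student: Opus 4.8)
The plan is to verify the three defining properties of a Grothendieck category for $\Cpx\G$ in turn: that it is abelian, that its direct limits are exact, and that it has a generator. The guiding observation is that the additive structure, kernels, cokernels, and all colimits in $\Cpx\G$ are computed degreewise, so the first two properties will descend from the corresponding properties of $\G$. Concretely, for a chain map $f\colon X \to Y$ one forms its kernel and cokernel by taking $\Ker f^n$ and $\Coker f^n$ in each degree, equipped with the induced differentials, and biproducts are formed degreewise; since $\G$ is abelian, the canonical map from coimage to image is an isomorphism in each degree and hence as a chain map, so $\Cpx\G$ is abelian. For the exactness of direct limits I would use that a direct limit in $\Cpx\G$ is the complex whose $n$-th term is the direct limit of the $n$-th terms, together with the fact that a sequence of complexes is exact exactly when it is so in every degree. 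Applying $\colim$ degreewise to a directed system of short exact sequences of complexes then produces short exact sequences by the AB5 property of $\G$, and reassembling over all degrees shows that direct limits are exact in $\Cpx\G$.

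The one step requiring an actual construction, and the main obstacle, is producing a generator. Fix a generator $G$ of $\G$. For each $n \in \Z$ and $A \in \G$ I would introduce the disk complex
$$
D^n(A)\colon \qquad \dots \la 0 \la A \overset{\mathrm{id}}\la A \la 0 \la \dots,
$$
with the two copies of $A$ placed in degrees $n$ and $n+1$. Inspecting the chain-map condition shows that a morphism $D^n(A) \to X$ is freely determined by its degree-$n$ component, which gives a natural isomorphism $\Hom_{\Cpx\G}(D^n(A), X) \cong \Hom_\G(A, X^n)$; that is, $D^n(-)$ is left adjoint to the evaluation functor $X \mapsto X^n$.

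Using this adjunction I would check that $\{D^n(G) \mid n \in \Z\}$ is a generating set. If $Y \subsetneq X$ is a proper subobject in $\Cpx\G$, there is a degree $n$ with $Y^n \subsetneq X^n$, and since $G$ generates $\G$ some morphism $g\colon G \to X^n$ does not factor through $Y^n$. The chain map $D^n(G) \to X$ adjoint to $g$ then cannot factor through the inclusion $Y \hookrightarrow X$, since any such factorization would, on the degree-$n$ component, force $g$ through $Y^n$. Because $\Cpx\G$ is cocomplete, the coproduct $\bigoplus_{n \in \Z} D^n(G)$ exists and is a single generator. Together with the first two properties this shows that $\Cpx\G$ is a Grothendieck category.
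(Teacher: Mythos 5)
Your proposal is correct and follows essentially the same route as the paper: verify abelianness and exactness of direct limits degreewise, and take the disk complexes $\dots \to 0 \to G \overset{1_G}\to G \to 0 \to \dots$ on a generator $G$ of $\G$ as a generating set for $\Cpx\G$. The extra details you supply (the adjunction $\Hom_{\Cpx\G}(D^n(A),X)\cong\Hom_\G(A,X^n)$ and the explicit check of the generating property) are accurate elaborations of what the paper leaves implicit.
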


\begin{proof}
Clearly, $\Cpx\G$ is abelian with exact filtered colimits, since limits and colimits in $\Cpx{\mathcal G}$ are computed componentwise. Suppose $G \in \G$ is a generator for $\G$. Then the complexes of the shape
$$
\qquad \dots \la 0 \la 0 \la G \stackrel{1_G}\la G \la 0 \la 0 \la \dots,
$$
form a generating set for $\Cpx\G$.
\end{proof}

There is more structure on $\Cpx\G$ which we will need. Namely, instead of all short exact sequences $0 \to X \to Y \to Z \to 0$ of complexes, we sometimes consider only the sequences for which $0 \to X^n \to Y^n \to Z^n \to 0$ splits in $\G$ for each $n \in \Z$. These exact sequences make $\Cpx\G$ an \emph{exact category} in the sense of~\cite[App. A]{Kst}, with the \emph{componentwise split exact structure}, and allow us to define a corresponding variant of the Yoneda $\Ext$ which we denote by $\Ext^n_\CGcs$, to distinguish it from the usual $\Ext$-functor on $\Cpx\G$ which we denote by $\Ext^n_{\Cpx\G}$. We refer to~\cite[App. A]{Kst} and \cite[\S XII.4 and XII.5]{McL} for details.

Let us stress two facts here. First, for each pair $Z,X \in \Cpx\G$, the group $\Ext^1_\CGcs(Z,X)$ is naturally a subgroup of $\Ext^1_{\Cpx\G}(Z,X)$. Second, we have the following well-known lemma:

\begin{lem} \label{lem:stable_hom} \cite[\S1]{Ha}
Assigning to each chain complex morphism $f: Z[-1] \to X$ the componentwise split exact sequence $0 \to X \to C_f \to Z \to 0$, where $C_f$ is the mapping cone of $f$, induces a natural epimorphism
$$ \Hom_{\Cpx\G}(Z[-1],X) \la \Ext^1_\CGcs(Z,X), $$
whose kernel is formed precisely by the null-homotopic morphisms $Z[-1] \to X$.
\end{lem}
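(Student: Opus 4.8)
The plan is to exhibit the displayed arrow as an explicit map $\Phi$ of abelian groups and then check four things in turn: that $\Phi$ is well defined, additive, surjective, and has the stated kernel. Recall that an element of $\Ext^1_\CGcs(Z,X)$ is the equivalence class of a componentwise split short exact sequence $0 \to X \to Y \to Z \to 0$, with addition given by Baer sum. I would fix the explicit model of the cone: with the conventions $(Z[-1])^n = Z^{n-1}$ and $d_{Z[-1]} = -d_Z$ (shifted), the cone of $f\colon Z[-1]\to X$ has components $C_f^n = X^n \oplus Z^n$ and differential $\smallpmatrix{d_X^n & f^{n+1} \\ 0 & d_Z^n}$, where $f^{n+1}\colon Z^n \to X^{n+1}$. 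The maps $X \to C_f$ and $C_f \to Z$ are the degreewise split inclusion and projection, so $0 \to X \to C_f \to Z \to 0$ really is a componentwise split short exact sequence of complexes; this already gives well-definedness of $\Phi \colon f \mapsto [0 \to X \to C_f \to Z \to 0]$.

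For surjectivity I would start from an arbitrary componentwise split extension $0 \to X \overset{\iota}\la Y \overset{\pi}\la Z \to 0$. Choosing in each degree a section of $\pi^n$ and a retraction of $\iota^n$ identifies $Y^n$ with $X^n \oplus Z^n$; since $\iota$ and $\pi$ are chain maps, the differential of $Y$ becomes upper triangular, $\smallpmatrix{d_X^n & \psi^n \\ 0 & d_Z^n}$, and $d_Y^{n+1}d_Y^n = 0$ forces $d_X^{n+1}\psi^n + \psi^{n+1}d_Z^n = 0$. Reading $\psi^n$ as $f^{n+1}$, this is exactly the condition that $f\colon Z[-1]\to X$ be a chain map, and the identification $Y \cong C_f$ is compatible with $\iota$ and $\pi$, hence is an equivalence of extensions. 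Thus every class lies in the image of $\Phi$.

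The additivity of $\Phi$ is the step that demands the most care, and I regard it as the main obstacle: one must show that the Baer sum of the extensions attached to $f$ and $g$ is equivalent to the extension attached to $f+g$. I would do this through the explicit pullback–pushout construction of the Baer sum. The pullback of $C_f \to Z \la C_g$ has degree-$n$ component $X^n \oplus X^n \oplus Z^n$, and pushing out along the codiagonal $X \oplus X \to X$ collapses the two copies of $X^n$ to their sum while combining the two off-diagonal terms into $f^{n+1} + g^{n+1}$; the outcome is precisely $C_{f+g}$, and one verifies the identification respects the structure maps. Tracking the signs and the triangular differentials through this construction is the only genuinely fiddly part.

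Finally, for the kernel: $\Phi(f) = 0$ means $0 \to X \to C_f \to Z \to 0$ splits in $\Cpx\G$, i.e. $C_f \to Z$ admits a chain-map section $\tau$. Writing $\tau^n = \smallpmatrix{-t^n \\ 1}$ with $t^n \colon Z^n \to X^n$ and imposing $d_{C_f}\tau = \tau d_Z$, the $X$-component yields $f^{n+1} = d_X^n t^n - t^{n+1} d_Z^n$. After the reindexing $t^n = s^{n+1}$ this is exactly the equation saying that $(s^n)$ is a homotopy from $f$ to $0$ on the shifted complex $Z[-1]$; so a chain-map section exists precisely when $f$ is null-homotopic, identifying $\ker\Phi$ with the null-homotopic morphisms. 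Naturality in $Z$ and in $X$ then follows from functoriality of the cone together with functoriality of the pullback and pushout defining the induced maps on $\Ext^1_\CGcs$, so it requires no argument beyond unwinding the definitions.
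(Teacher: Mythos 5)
Your argument is correct and complete: the cone construction, the upper-triangular normal form of a componentwise split extension, the Baer-sum computation for additivity, and the identification of chain-map sections of $C_f \to Z$ with null-homotopies of $f$ (after the shift reindexing) are all carried out with consistent sign conventions. The paper itself offers no proof, stating the lemma as well known and citing \cite[\S1]{Ha}; what you have written is precisely the standard argument behind that reference, so there is nothing to compare beyond noting agreement.
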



\smallskip

Next, we turn to the central concept---filtrations and filtered objects. Generalizing the corresponding concepts from~\cite[\S3.1]{GT}, we can define them as follows:

\begin{defn} \label{def:filtr}
Let $\clS$ be a class of objects of $\G$. An object $X \in \G$ is called \emph{$\clS$-filtered} if there exists a well-ordered direct system $(X_\alpha, i_{\alpha\beta} \mid \alpha<\beta\le\sigma)$ indexed by an ordinal number $\sigma$ \st
\begin{enumerate}
 \item $X_0 = 0$ and $X_\sigma = X$,
 \item for each limit ordinal $\mu\le\sigma$, the colimit of the subsystem $(X_\alpha, i_{\alpha\beta} \mid \alpha<\beta<\mu)$ is precisely $X_\mu$, the colimit morphisms being $i_{\alpha\mu}: X_\alpha \to X_\mu$,
 \item $i_{\alpha\beta}: X_\alpha \to X_\beta$ is a monomorphism in $\G$ for each $\alpha<\beta\le\sigma$,
 \item $\Coker i_{\alpha,\alpha+1} \in \clS$ for each $\alpha<\sigma$.
\end{enumerate}
The direct system $(X_\alpha, i_{\alpha\beta})$ is then called an \emph{$\clS$-filtration} of $X$.
The class of all $\clS$-filtered objects in $\G$ is denoted by $\Filt\clS$.
\end{defn}

Roughly speaking, $\Filt\clS$ is the class of all transfinite extensions of objects of $\clS$. We will often consider all $X_\alpha$ as subobjects of $X$, in which case condition (2) translates to: $X_\mu = \bigcup_{\alpha<\mu} X_\alpha$ for each limit ordinal $\mu\le\sigma$. The key notion here comes up when $\clS$ is a \emph{set} of objects rather than just a class.

\begin{defn} \label{def:deconstructible}
A class $\F$ of objects in $\G$ is called \emph{deconstructible} if there is a set $\clS$ \st $\F = \Filt\clS$.
\end{defn}

\begin{rem}
In the literature, it is sometimes only required that there be a set $\clS \subseteq \F$ \st each object of $\F$ is $\clS$-filtered, that is, $\F \subseteq \Filt\clS$. We refer for example to~\cite{Tr}. However, we need the equality between $\F$ and $\Filt\clS$ for some results. Luckily, deconstructible classes occurring in practice usually seem to have this property.
\end{rem}

Let us establish some elementary closure properties of deconstructible classes:

\begin{lem} \label{lem:deconstr_filt}
Let $\F$ be a deconstructible class in $\G$. Then any $\F$-filtered object of $\G$ belongs to $\F$, so $\F = \Filt\F$. In particular, $\F$ is closed under taking coproducts and extensions.
\end{lem}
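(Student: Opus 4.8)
The plan is to exploit the transitivity of filtrations: a filtration whose factors are themselves filtered can be refined into a single finer filtration. Since $\F$ is deconstructible, I may fix a \emph{set} $\clS$ with $\F = \Filt\clS$, and the content of the lemma is the inclusion $\Filt\F \subseteq \F$; the reverse inclusion $\F \subseteq \Filt\F$ is immediate, as any $X \in \F$ is trivially $\F$-filtered by the two-step system $0 = X_0 \subseteq X_1 = X$.

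So let $X \in \Filt\F$ and fix an $\F$-filtration $(X_\alpha, i_{\alpha\beta} \mid \alpha < \beta \le \sigma)$, viewed as a continuous increasing chain of subobjects with $\bigcup_{\alpha<\mu} X_\alpha = X_\mu$ at limit ordinals. For each $\alpha < \sigma$ the factor $F_\alpha := X_{\alpha+1}/X_\alpha$ lies in $\F = \Filt\clS$, so it carries an $\clS$-filtration $(F_{\alpha,\gamma} \mid \gamma \le \tau_\alpha)$. I would pull these back along the projection $\pi_\alpha : X_{\alpha+1} \to F_\alpha$, setting $Y_{\alpha,\gamma} := \pi_\alpha^{-1}(F_{\alpha,\gamma})$, obtaining a chain $X_\alpha = Y_{\alpha,0} \subseteq \dots \subseteq Y_{\alpha,\tau_\alpha} = X_{\alpha+1}$ whose consecutive factors satisfy $Y_{\alpha,\gamma+1}/Y_{\alpha,\gamma} \cong F_{\alpha,\gamma+1}/F_{\alpha,\gamma} \in \clS$, by the correspondence between subobjects of $X_{\alpha+1}$ containing $\Ker\pi_\alpha = X_\alpha$ and subobjects of $F_\alpha$ in the abelian category $\G$.

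I then concatenate all these local refinements into one system indexed by the pairs $(\alpha,\gamma)$ with $\alpha < \sigma$ and $\gamma \le \tau_\alpha$, ordered lexicographically and with the top $Y_{\alpha,\tau_\alpha}$ of each block identified with the bottom $Y_{\alpha+1,0}$ of the next (both equal to $X_{\alpha+1}$). The resulting chain is increasing, starts at $0$, ends at $X$, and all its one-step factors lie in $\clS$, so conditions (3) and (4) of Definition~\ref{def:filtr} hold; hence $X$ is $\clS$-filtered and $X \in \Filt\clS = \F$.

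The step requiring genuine care is the continuity condition (2) at limit ordinals of the concatenated chain, which come in two kinds. At an index $(\alpha,\mu)$ with $\mu \le \tau_\alpha$ a limit, continuity reduces to $F_{\alpha,\mu} = \bigcup_{\gamma<\mu} F_{\alpha,\gamma}$ together with the fact that in a Grothendieck category taking preimages along $\pi_\alpha$ commutes with direct unions of subobjects, both being directed colimits and hence exact; this is where the properties collected in Appendix~\ref{sec:properties of Grothendieck categories} enter. At an index $(\lambda,0)$ with $\lambda \le \sigma$ a limit, the relevant subobject is $X_\lambda$, and continuity is exactly the hypothesis $X_\lambda = \bigcup_{\alpha<\lambda} X_\alpha$ of the original $\F$-filtration, using that $\{\alpha+1 \mid \alpha<\lambda\}$ is cofinal in $\lambda$ so that $\bigcup_{\alpha<\lambda} X_{\alpha+1} = X_\lambda$. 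Once continuity is settled, the ``in particular'' claims follow formally: a coproduct $\coprod_i F_i$ of objects of $\F$ is $\F$-filtered by its partial coproducts, and an extension of two objects of $\F$ is $\F$-filtered by a three-term chain, so both lie in $\Filt\F = \F$.
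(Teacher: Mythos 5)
Your proposal is correct and follows essentially the same route as the paper: refine the given $\F$-filtration by inserting an $\clS$-filtration of each factor $X_{\alpha+1}/X_\alpha$, lifted to subobjects of $X_{\alpha+1}$ containing $X_\alpha$ (your preimages $\pi_\alpha^{-1}(F_{\alpha,\gamma})$ are exactly the pullbacks the paper uses), then concatenate. You are in fact more explicit than the paper about the continuity condition at the two kinds of limit indices, which the paper leaves as ``straightforward.''
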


\begin{proof}
Assume that $\F = \Filt\clS$ for some set $\clS$ and that $X \in \G$ is an $\F$-filtered object. That is, there is an $\F$-filtration $(X_\alpha, i_{\alpha\beta} \mid \alpha<\beta\le\sigma)$ with $X_\sigma = X$. We claim that it is possible to refine this filtration to an $\clS$-filtration. To see this, fix $\alpha<\sigma$, denote $F_\alpha = \Coker i_{\alpha,\alpha+1}$, and consider an $\clS$-filtration $(G_\gamma, j_{\gamma\delta} \mid \gamma<\delta\le\tau)$ of $F_\alpha$. Using the pull back diagrams
$$
\begin{CD}
0   @>>>   X_\alpha       @>{m_{0\gamma}}>>         Y_\gamma     @>>>   G_\gamma   @>>>   0\phantom{,}     \\
@.            @|                                      @VVV         @VV{j_{\gamma\tau}}V                    \\
0   @>>>   X_\alpha   @>{i_{\alpha,\alpha+1}}>>   X_{\alpha+1}   @>>>   F_\alpha   @>>>   0,               \\
\end{CD}
$$
it is straightforward to construct a direct system $(Y_\gamma, m_{\gamma\delta} \mid \gamma<\delta\le\tau)$ satisfying (2)--(4) of Definition~\ref{def:filtr}, and \st $Y_0 = X_\alpha$ and $Y_\tau = X_{\alpha+1}$. Now, we can for each $\alpha < \tau$ insert such a direct system between $X_\alpha$ and $X_{\alpha+1}$. In this way, we obtain an $\clS$-filtration for $X$. Hence $X \in \F$.

The fact that $\mathcal{F}$ is closed under extensions is then clear. Finally, given a family $(F'_\gamma \mid \gamma\in I)$ of objects of $\F$, we can assume that $I=\sigma$ is an ordinal. Then there is a filtration $(X_\alpha, i_{\alpha\beta} \mid \alpha<\beta\le\sigma)$ for $\bigoplus_{\gamma \in I} F'_\gamma$, where $X_\alpha=\bigoplus_{\gamma<\alpha} F'_\gamma$ and the morphisms $i_{\alpha\beta}$ are the canonical split monomorphisms.
\end{proof}

To finish the section, we state a connection between filtrations and the $\Ext$-functor. The proposition below is usually used in connection with so-called cotorsion pairs, a concept which we will not need as such in this paper. The adapted version is then as follows:

\begin{prop} \label{prop:cotorsion_and_filt}
Let $\G$ be a Grothendieck category and $\clS$ be a generating set of objects. Let us define the following classes in $\G$:
\begin{itemize}
 \item $\C = \{C \in \G \mid \ExtG(S,C) = 0 \textrm{ for each } S \in \clS\},$
 \item $\F = \{F \in \G\, \mid \ExtG(F,C) = 0 \textrm{ for each } C \in \C\}.$
\end{itemize}
Then $\F$ coincides with the class of all direct summands of objects of $\Filt\clS$.
\end{prop}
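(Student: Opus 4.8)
The statement is the Eklof--Trlifaj description of the cotorsion pair generated by a set, adapted to a Grothendieck category, and the plan is to prove the two inclusions separately. Throughout, note that $\ExtG(S,C)=0$ for all $S\in\clS$ and $C\in\C$ directly from the definition of $\C$, and that in a Grothendieck category each group $\ExtG(X,Y)$ is a genuine (small) set.

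For the inclusion ``$\supseteq$'' I would first check that $\Filt\clS\subseteq\F$ by an Eklof-type lifting argument. Fix $A\in\Filt\clS$ with a filtration $(A_\alpha\mid\alpha\le\sigma)$ as in Definition~\ref{def:filtr}, an object $C\in\C$, and an arbitrary extension $0\to C\to E\overset{\pi}{\la}A\to 0$; the goal is to split it. I would construct by transfinite recursion a compatible family of partial sections $g_\alpha\colon A_\alpha\to E$ with $\pi g_\alpha=i_{\alpha\sigma}$: at a limit ordinal the family $(g_\beta)_{\beta<\mu}$ glues to $g_\mu$ by condition~(2) of Definition~\ref{def:filtr} and the universal property of the colimit, while at a successor the obstruction to extending $g_\alpha$ across $0\to A_\alpha\to A_{\alpha+1}\to\Coker i_{\alpha,\alpha+1}\to 0$ lies in $\ExtG(\Coker i_{\alpha,\alpha+1},C)$, which vanishes because $\Coker i_{\alpha,\alpha+1}\in\clS$. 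Then $g_\sigma$ splits the sequence, so $A\in\F$. Since $\ExtG(-,C)$ is additive, it carries direct summands to direct summands, whence every direct summand of an object of $\Filt\clS$ again lies in $\F$.

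For ``$\subseteq$'' I would show that every $F\in\F$ admits a \emph{special precover}, i.e. a short exact sequence $0\to C\to A\overset{p}{\la}F\to 0$ with $A\in\Filt\clS$ and $C\in\C$; the conclusion then follows at once, since $F\in\F$ and $C\in\C$ force $\ExtG(F,C)=0$, so the sequence splits and exhibits $F$ as a direct summand of $A\in\Filt\clS$. To build the precover, I use that $\clS$ generates $\G$ to choose an epimorphism $p_0\colon A_0\to F$ with $A_0\in\Sum\clS\subseteq\Filt\clS$, and put $K_0=\Ker p_0$. Fixing a regular cardinal $\kappa$ for which every $S\in\clS$ is $<\kappa$-presentable, I construct a continuous chain $A_0\subseteq A_1\subseteq\dots\subseteq A_\kappa$ with compatible epimorphisms $p_\alpha\colon A_\alpha\to F$ and kernels $K_\alpha$. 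At a successor step I index the set $\Gamma=\coprod_{S\in\clS}\ExtG(S,K_\alpha)$, realize the universal extension $0\to K_\alpha\to K_{\alpha+1}\to\bigoplus_{\gamma\in\Gamma}S_\gamma\to 0$, and let $A_{\alpha+1}$ be the pushout of $A_\alpha\hookleftarrow K_\alpha\hookrightarrow K_{\alpha+1}$; this gives $A_{\alpha+1}/A_\alpha\cong\bigoplus_\gamma S_\gamma\in\Sum\clS$ together with an epimorphism $p_{\alpha+1}$ extending $p_\alpha$ and satisfying $\Ker p_{\alpha+1}=K_{\alpha+1}$. At limit ordinals I take unions. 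By Lemma~\ref{lem:deconstr_filt}, $A_\kappa$ is $\clS$-filtered (it is filtered by the $\Sum\clS$-factors, each itself $\clS$-filtered), and $p_\kappa$ is an epimorphism with kernel $C:=K_\kappa$.

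The heart of the argument, and the main obstacle, is to verify that $C\in\C$, i.e. that $\ExtG(S,K_\kappa)=0$ for every $S\in\clS$. By the universal-extension property and the long exact sequence of $\ExtG(S,-)$, every class in $\ExtG(S,K_\alpha)$ is killed in $\ExtG(S,K_{\alpha+1})$; hence it suffices that the canonical map $\li_{\alpha<\kappa}\ExtG(S,K_\alpha)\to\ExtG(S,K_\kappa)$ be surjective. This is exactly where $<\kappa$-presentability of $S$ and the fact that $K_\kappa=\li_{\alpha<\kappa}K_\alpha$ is a $\kappa$-directed union of monomorphisms enter: I would establish (or cite) that $\ExtG(S,-)$ commutes with $\kappa$-directed colimits of monomorphisms for $<\kappa$-presentable $S$, which is the only place the cardinal bound is needed. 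Granting this, every element of $\ExtG(S,K_\kappa)$ descends to some stage $\alpha<\kappa$ and dies at stage $\alpha+1$, so $K_\kappa\in\C$, completing the construction and hence the proof.
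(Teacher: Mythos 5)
Your argument is correct, and it is in substance the argument the paper relies on: the paper disposes of this proposition purely by citation (to \cite[Lemma 3.6]{G1}, \cite[Lemma 4.3]{EGPT}, and \cite[Corollary 2.14]{SaSt}), and those references prove precisely the two halves you reconstruct --- the Eklof-type transfinite induction for $\Filt\clS\subseteq\F$ (plus closure of $\F$ under summands by additivity of $\ExtG(-,C)$), and Salce's trick combined with the small object argument of iterated universal extensions for the reverse inclusion. So you have supplied the proof that the paper outsources, rather than found a genuinely different route.

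The one step you defer is indeed the crux in this generality, because the usual module-theoretic shortcut (lift an extension of $S$ by $K_\kappa$ through a projective presentation of $S$) is unavailable in a Grothendieck category without projectives. It can be closed with the paper's Appendix~\ref{sec:properties of Grothendieck categories}. First note that, by exactness of direct limits, the kernels form a continuous chain with $K_\kappa=\bigcup_{\alpha<\kappa}K_\alpha$, a $\kappa$-directed union of monomorphisms. Given an extension $0\to K_\kappa\to E\to S\to 0$ with $S$ $<\kappa$-presentable, write $E$ as the $\kappa$-directed union of its $<\kappa$-generated subobjects; since $S\cong E/K_\kappa$ is $<\kappa$-generated, some such $W\subseteq E$ satisfies $W+K_\kappa=E$. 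Then $W\cap K_\kappa$ is $<\kappa$-generated by Lemma~\ref{lem:size of kernels}, hence contained in some $K_\alpha$ with $\alpha<\kappa$. Putting $E_\alpha=W+K_\alpha$ and using modularity of $\Subobj(E)$ (Lemma~\ref{lem:upper cont and modular}) gives $E_\alpha\cap K_\kappa=K_\alpha+(W\cap K_\kappa)=K_\alpha$ and $E_\alpha+K_\kappa=E$, so $0\to K_\alpha\to E_\alpha\to S\to 0$ is an extension whose pushout along $K_\alpha\subseteq K_\kappa$ recovers the one you started with. This yields exactly the surjectivity of $\li_{\alpha<\kappa}\ExtG(S,K_\alpha)\to\ExtG(S,K_\kappa)$ that you need, and with it your proof is complete.
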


\begin{proof}
The statement is a direct consequence of~\cite[Lemma 3.6]{G1} and~\cite[Lemma 4.3]{EGPT}. Alternatively, a more general version for certain exact categories is given in~\cite[Corollary 2.14]{SaSt}, while a more specialized version for module categories has been proved in~\cite[Corollary 3.2.4]{GT}.
\end{proof}

\begin{rem}
A non-trivial consequence proved later on in Proposition~\ref{prop:add_deconstr}(1) is that $\F$ is a deconstructible class.
\end{rem}

\section{The Hill lemma for Grothendieck categories}
\label{sec:hill}

An important technical tool for dealing with transfinite filtrations in module categories is known as the Generalized Hill Lemma, cf.~\cite[Theorem 4.2.6]{GT} or~\cite[Theorem 6]{StT}. This result, whose idea is due to Hill~\cite{Hill} and versions of which appeared in~\cite{EFS,FL}, roughly says that if we have a single filtration of a module, we automatically get a large family of filtrations.
The key point here is that when overcoming some technical details, a completely analogous result holds for all Grothendieck categories.

\begin{thm}[Generalized Hill Lemma] \label{thm:generalized hill lemma}
Let $\kappa$ be an infinite regular cardinal and $\G$ a locally $<\kappa$-presentable Grothendieck category. Suppose that $\clS$ is a set of $<\kappa$-presentable objects and that $X$ is the union of an $\clS$-filtration
$$
0 = X_0 \subseteq X_1 \subseteq X_2 \subseteq \dots
\subseteq X_\alpha \subseteq X_{\alpha+1} \subseteq \dots
\subseteq X_\sigma = X
$$
for some ordinal $\sigma$. Then there is a complete sublattice $\clL$ of $\big(\Pow(\sigma),\cup,\cap \big)$ and
$$ \ell: \clL \la \Subobj(X) $$
which assigns to each $S \in \clL$ a subobject $\ell(S)$ of $X$, \st the following hold:
\begin{enumerate}
\item[(H1)] For each $\alpha \leq \sigma$ we have $\alpha = \{\gamma \mid \gamma<\alpha \} \in \clL$ and $\ell(\alpha) = X_\alpha$.

\item[(H2)] If $(S_i)_{i \in I}$ is any family of elements of $\clL$, then $\ell(\bigcup S_i) = \sum \ell(S_i)$ and $\ell(\bigcap S_i) = \bigcap \ell(S_i)$. In particular, $\ell$ is a complete lattice homomorphism from $(\clL,\cup,\cap)$ to the lattice $(\Subobj(X), \Sigma, \cap)$ of subobjects of $X$.

\item[(H3)] If $S,T \in \clL$ are \st $S \subseteq T$, then the object $N = \ell(T)/\ell(S)$ is $\clS$-filtered. More precisely, there is an $\clS$-filtration $(N_\beta \mid \beta\le\tau)$ and a bijection  $b: T \setminus S \to \tau$ $(= \{\beta \mid \beta<\tau\})$ \st $X_{\alpha+1}/X_\alpha \cong N_{b(\alpha)+1}/N_{b(\alpha)}$ for each $\alpha \in T \setminus S$.

\item[(H4)] For each $<\kappa$-generated subobject $Y \subseteq X$, there is $S \in \clL$ of cardinality $<\kappa$ (so $\ell(S)$ is $<\kappa$-presentable by \emph{(H3)}) \st $Y \subseteq \ell(S) \subseteq X$.
\end{enumerate}
\end{thm}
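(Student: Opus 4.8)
The plan is to build the lattice $\clL$ and the map $\ell$ explicitly from the given $\clS$-filtration by a transfinite construction that records, for each successor step $X_\alpha \subseteq X_{\alpha+1}$, a choice of a section-like datum witnessing how the factor $X_{\alpha+1}/X_\alpha \in \clS$ sits inside $X$. Concretely, I would fix for each $\alpha<\sigma$ a $<\kappa$-presentable ``generator'' of the step, i.e.\ a subobject or a morphism from a presentation of $X_{\alpha+1}/X_\alpha$ into $X_{\alpha+1}$ lifting the identity on the factor. The point of local $<\kappa$-presentability is that $X_{\alpha+1}/X_\alpha$ is $<\kappa$-presentable, so it is generated by $<\kappa$ elements of the generating set $\clS$, each of which maps into $X_{\alpha+1}$; collecting these gives a ``$<\kappa$-sized piece'' $P_\alpha \subseteq X_{\alpha+1}$ with $P_\alpha + X_\alpha = X_{\alpha+1}$. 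Then for a subset $S \subseteq \sigma$ I would set $\ell(S) = \sum_{\alpha \in S} P_\alpha$, and $\clL$ would be the collection of those $S$ that are ``closed'' under a certain dependency relation forced by the differentials/connecting maps between the $P_\alpha$'s.

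The key technical device is the dependency relation. Because $P_\alpha \subseteq X_{\alpha+1}$ but $P_\alpha$ need not lie in $X_\alpha$, expressing a chosen generator of $X_{\alpha+1}/X_\alpha$ inside $X$ will involve, after projecting, elements of lower $X_\beta$'s; since everything is $<\kappa$-generated and $\kappa$ is regular, each $\alpha$ depends on a set of $<\kappa$ indices below it. First I would define $S \in \clL$ to mean $S$ is closed downward under this dependency relation, and verify that such ``closed'' sets form a complete sublattice of $\Pow(\sigma)$: closure under arbitrary unions and intersections follows because the dependency relation is a fixed binary relation on $\sigma$ and closedness is preserved by both operations. That each initial segment $\alpha = \{\gamma \mid \gamma<\alpha\}$ is closed gives (H1), once I check $\ell(\alpha)=X_\alpha$, which should reduce to $\sum_{\gamma<\alpha} P_\gamma = X_\alpha$ by transfinite induction using that $X_\mu = \bigcup_{\gamma<\mu} X_\gamma$ at limits.

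For (H2) I would verify the two halves separately: $\ell(\bigcup S_i)=\sum \ell(S_i)$ is immediate from the definition $\ell(S)=\sum_{\alpha\in S}P_\alpha$, while $\ell(\bigcap S_i)=\bigcap \ell(S_i)$ is the genuinely nontrivial meet-preservation statement. This is where exactness of direct limits in $\G$ and the modular-type behaviour of the subobject lattice enter, and I expect to prove it by reducing, via the dependency closure, to a statement about how the $P_\alpha$ for indices in the various $S_i$ interact; the closedness of the $S_i$ is exactly what prevents spurious elements from appearing in intersections. For (H3), given $S\subseteq T$ both in $\clL$, I would enumerate $T\setminus S$ by an ordinal $\tau$ via a bijection $b$ respecting the dependency order, and build the filtration $(N_\beta)$ of $\ell(T)/\ell(S)$ by adding the images of the $P_\alpha$'s one index at a time; the closedness guarantees that at each stage the added factor is isomorphic to $X_{\alpha+1}/X_\alpha\in\clS$, giving both the $\clS$-filtration and the factor-matching isomorphisms. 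Finally (H4) follows from regularity of $\kappa$: a $<\kappa$-generated $Y$ is generated by $<\kappa$ elements, each lying in some $X_{\alpha+1}$ hence ``supported'' on $<\kappa$ indices, and taking the dependency-closure of this support stays of cardinality $<\kappa$ because $\kappa$ is regular and each index contributes only $<\kappa$ new dependencies.

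The main obstacle, as is usual for Hill-type arguments, will be the meet-preservation part of (H2), namely $\ell(\bigcap S_i)=\bigcap\ell(S_i)$. The inclusion $\subseteq$ is formal, but the reverse requires showing that any element of $X$ lying in $\ell(S_i)$ for every $i$ can already be assembled from the pieces $P_\alpha$ with $\alpha\in\bigcap S_i$; this is precisely the place where the dependency relation must be set up with enough care that ``closedness'' encodes all the algebraic relations among the $P_\alpha$. Getting the dependency relation exactly right---strong enough to force meet-preservation and the clean factor structure in (H3), yet weak enough that closures of small sets stay small for (H4)---is the crux of the whole construction, and the abelian-category subtleties (no elements to chase, only subobjects and exact sequences) are what make the Grothendieck-category version more delicate than the classical module-theoretic one.
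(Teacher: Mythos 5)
Your skeleton matches the paper's: you fix $<\kappa$-generated pieces $A_\alpha$ (your $P_\alpha$) with $X_{\alpha+1}=X_\alpha+A_\alpha$, set $\ell(S)=\sum_{\alpha\in S}A_\alpha$, let $\clL$ consist of the ``closed'' subsets of $\sigma$, and then verify (H1)--(H4). But there is a genuine gap at the single point on which the whole theorem turns: the definition of ``closed.'' You propose that $S$ be downward closed under a \emph{fixed binary dependency relation} on $\sigma$, and you observe (correctly) that with such a definition $\clL$ is trivially a complete sublattice of $\Pow(\sigma)$. The problem is that no fixed binary relation does the job. The condition actually needed is $X_\alpha\cap A_\alpha\subseteq\sum_{\gamma\in S,\,\gamma<\alpha}A_\gamma$ for every $\alpha\in S$; this is a condition on the whole set $S$, because the ``interference'' $X_\alpha\cap A_\alpha$ can be covered by many different collections of lower-indexed $A_\gamma$'s, and different closed sets containing $\alpha$ may use different ones. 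With this (correct) definition, closure of $\clL$ under unions is easy, but closure under \emph{intersections} is a real theorem: one must first prove $\ell(S)\cap X_\alpha=\ell(S\cap\alpha)$ for closed $S$ (by transfinite induction using modularity and upper continuity of $\Subobj(X)$), then deduce $\ell(\bigcap S_i)=\bigcap\ell(S_i)$ by a minimal-counterexample argument again using modularity, and only then conclude that $\bigcap S_i$ is closed. Your plan inverts this logical order: you make closure of $\clL$ under meets free and defer all the work to meet-preservation of $\ell$, for which you offer only the expectation that modularity and exactness of direct limits will ``enter.'' You explicitly flag getting the dependency relation right as the crux, but flagging the crux is not the same as resolving it, and the specific mechanism you propose points in a direction that does not close.

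The same missing condition undermines (H3) and (H4) as sketched. In (H3), the identification of the factor added at stage $\alpha$ with $X_{\alpha+1}/X_\alpha$ rests on the equality $A_\alpha\cap\ell\bigl(S\cup(T\cap\alpha)\bigr)=A_\alpha\cap X_\alpha$, which is exactly an application of the lemma $\ell(S)\cap X_\alpha=\ell(S\cap\alpha)$ together with modularity; without the correct closedness condition there is no reason the factor is not a proper quotient of $X_{\alpha+1}/X_\alpha$. In (H4), the statement that ``each index contributes only $<\kappa$ new dependencies'' must be replaced by an element-free argument: one shows $X_\beta\cap A_\beta$ is $<\kappa$-generated by applying the kernel-size lemma to $0\to X_\beta\cap A_\beta\to A_\beta\to X_{\beta+1}/X_\beta\to 0$ (using that $X_{\beta+1}/X_\beta$ is $<\kappa$-presentable), and then runs a transfinite induction producing a closed set of size $<\kappa$ containing $\{\beta\}$; regularity of $\kappa$ alone does not suffice without this. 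In short, the architecture is right but the load-bearing definition and the two lemmas it supports (meet-preservation of $\ell$ and stability of closedness under intersections) are absent, and the substitute you propose would not carry the weight.
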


\begin{rem} \label{rem:generalized hill lemma}
Although somewhat lengthy, the statement is very natural. To understand the motivation, imagine the following simple model situation. Let $\G = \Mod{R}$, $\clS=\{R\}$ and $X$ be a module with an $\clS$-filtration $(X_\alpha \mid \alpha\le\sigma)$. Clearly, $X \cong R^{(\sigma)}$ is a free module and we can identify $X_\alpha = R^{(\alpha)}$ for each $\alpha \le \sigma$. In this situation, we have the obvious complete lattice homomorphisms $\ell$ from $\big(\Pow(\sigma),\cup,\cap \big)$ (= the power set of $\sigma$, when $\sigma$ is viewed as the set of all smaller ordinals) to the lattice of submodules of $X$ which is given by $\ell(S) = R^{(S)}$. The relation between elements $S \in \Pow(\sigma)$ and $\ell(S) \subseteq X$ is very well understood and, in particular, allows us to construct many other $\clS$-filtrations of $X$. The somewhat surprising fact is that much of this setup is preserved for general filtrations in Grothendieck categories.
\end{rem}

We devote a considerable part of this section to the proof, which is strongly inspired by the arguments used for~\cite[Theorem 4.2.6]{GT} or~\cite[Theorem 6]{StT}. Let us define $\clL$ and $\ell$ first. Given the $\clS$-filtration $(X_\alpha \mid \alpha \le \sigma)$ of $X$, it is not difficult to see that we can fix a family $(A_\alpha \mid \alpha<\sigma)$ of $<\kappa$-generated subobjects of $X$ \st $X_{\alpha+1} = X_\alpha + A_\alpha$ for each $\alpha<\sigma$. Then one calls a subset $S \subseteq \sigma$ \emph{closed} if every $\alpha \in S$ satisfies
$$ X_\alpha \cap A_\alpha \subseteq \sum_{^{\gamma \in S,}_{\gamma < \alpha}} A_\gamma. $$
Next we define
$$ \clL = \{ S \subseteq \sigma \mid S \textrm{ is closed} \} $$
and for any subset $S \subseteq \sigma$ we denote $\ell(S) = \sum_{\alpha \in S} A_\alpha$. The restriction of $\ell$ to $\clL$ will be the map $\ell$ from the statement of Theorem~\ref{thm:generalized hill lemma}. Let us establish a few basic properties of the just defined concepts.

\begin{lem} \label{lem:hill intersections - prelim}
If $S \subseteq \sigma$ is closed, then $\ell(S) \cap X_\alpha = \ell(S \cap \alpha)$ for each $\alpha < \sigma$.
\end{lem}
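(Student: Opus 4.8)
The plan is to prove the non-trivial inclusion $\ell(S)\cap X_\alpha\subseteq\ell(S\cap\alpha)$, since the reverse inclusion is immediate: from $S\cap\alpha\subseteq S$ and monotonicity of $\ell$ we get $\ell(S\cap\alpha)\subseteq\ell(S)$, while $A_\gamma\subseteq X_{\gamma+1}\subseteq X_\alpha$ for every $\gamma<\alpha$ forces $\ell(S\cap\alpha)=\sum_{\gamma\in S,\,\gamma<\alpha}A_\gamma\subseteq X_\alpha$. For the hard inclusion I would first trade the (possibly infinite) sum $\ell(S)$ for a directed union of finite partial sums. Writing $P_F=\sum_{\beta\in F}A_\beta$ for a finite $F\subseteq S$, one has $\ell(S)=\bigcup_F P_F$, a directed union over the finite subsets of $S$. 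Since $\G$ has exact direct limits, intersection distributes over directed unions of subobjects, so $\ell(S)\cap X_\alpha=\bigcup_F\bigl(P_F\cap X_\alpha\bigr)$, and it suffices to prove the claim that $P_F\cap X_\alpha\subseteq\ell(S\cap\alpha)$ for every finite $F\subseteq S$ and every $\alpha<\sigma$.

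I would establish this claim by transfinite induction on $\beta=\max F$, the empty sum $P_\emptyset=0$ being trivial. If $\beta<\alpha$ then $P_F\subseteq\ell(S\cap\alpha)$ already, and there is nothing to do. If $\beta\ge\alpha$, write $F=F'\cup\{\beta\}$ with $P_{F'}\subseteq X_\beta$ (as every element of $F'$ is $<\beta$). Because $X_\alpha\subseteq X_\beta$, we have $P_F\cap X_\alpha=(P_F\cap X_\beta)\cap X_\alpha$, and the modular law in the subobject lattice $\Subobj(X)$ gives $P_F\cap X_\beta=(P_{F'}+A_\beta)\cap X_\beta=P_{F'}+(A_\beta\cap X_\beta)$. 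Closedness of $S$ applied to $\beta\in S$ now yields $A_\beta\cap X_\beta\subseteq\sum_{\gamma\in S,\,\gamma<\beta}A_\gamma=:Q$, and $Q$ also contains $P_{F'}$, so $P_F\cap X_\beta\subseteq Q$. Expressing $Q$ once more as the directed union of the finite partial sums $P_G$ with $G\subseteq S\cap\beta$ and invoking exactness of direct limits a second time, we get $P_F\cap X_\alpha\subseteq Q\cap X_\alpha=\bigcup_G\bigl(P_G\cap X_\alpha\bigr)$; since $\max G<\beta$, each term lies in $\ell(S\cap\alpha)$ by the induction hypothesis, which closes the step.

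The point I expect to require the most care is carrying out, in the element-free setting of a Grothendieck category, the maneuver that for modules amounts to ``peeling off the top summand $a_\beta$ and applying closedness''. This is exactly where the two structural features of $\G$ enter essentially: the modular law lets me isolate $A_\beta\cap X_\beta$ after intersecting with $X_\beta$, and exactness of direct limits is what legitimizes both reductions to finite subsets (of $S$ and of $S\cap\beta$) together with the commutation of $\cap$ with the attendant directed unions. Verifying that the families of partial sums really are directed and that intersection genuinely commutes with their unions is the only delicate part; once these reductions are secured, the induction on $\max F$ is routine, and combining the two inclusions yields the asserted equality.
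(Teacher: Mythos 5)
Your proof is correct. Both inclusions are handled properly: the easy one follows from $A_\gamma\subseteq X_{\gamma+1}\subseteq X_\alpha$ for $\gamma<\alpha$, and the hard one goes through. The key step --- the modular-law identity $(P_{F'}+A_\beta)\cap X_\beta=P_{F'}+(A_\beta\cap X_\beta)$, legitimate because $P_{F'}\subseteq X_\beta$, followed by closedness of $S$ to absorb $A_\beta\cap X_\beta$ into $\ell(S\cap\beta)$ --- is exactly the computation at the heart of the paper's argument. Your appeals to ``exactness of direct limits'' are, in the paper's language, upper continuity of the complete lattice $\Subobj(X)$ (Lemma~\ref{lem:upper cont and modular}), and both the directedness of the family of finite partial sums and the commutation of $\cap$ with their join are covered by that lemma. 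The transfinite induction on $\max F$ is well-founded, and the sets $G\subseteq S\cap\beta$ to which you apply the induction hypothesis do have $\max G<\beta$, so the step closes.

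The organization differs from the paper's. The paper proves, by a single transfinite induction on $\beta\le\sigma$, the stronger-looking statement $\ell(S\cap\beta)\cap X_\alpha=\ell\bigl((S\cap\beta)\cap\alpha\bigr)$, handling limit $\beta$ by upper continuity and successor $\beta=\delta+1$ by the same modularity-plus-closedness maneuver you use, applied to the whole truncation $\ell(S\cap\delta)$ rather than to a finite partial sum. Your route instead pushes all the infinitary content into two invocations of upper continuity (one to reduce $\ell(S)$ to finite partial sums at the outset, one to re-expand $\ell(S\cap\beta)$ inside the induction step) and keeps the lattice algebra finitary. The paper's version is more compact --- one induction, no back-and-forth between finite and infinite joins --- while yours isolates cleanly which properties of $\Subobj(X)$ are used where: modularity only for the two-term peeling step, upper continuity only for the directed-union reductions. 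Both are valid proofs from the same two lattice-theoretic inputs.
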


\begin{proof}
We prove by induction that for each $\beta \le \sigma$ we have
$$ \ell(S \cap \beta) \cap X_\alpha = \ell\big((S \cap \beta) \cap \alpha\big). $$
This is clear for $\beta\le\alpha$, and for limit ordinals $\beta$ it follows from the fact that the lattice of subobjects of $X$ is upper continuous; see Lemma~\ref{lem:upper cont and modular}. Assume finally that $\beta = \delta+1$ and $\delta\ge\alpha$. Since $\ell(S \cap \alpha) \subseteq X_\alpha \subseteq X_\delta$, we are left to show
$$
\big(\ell(S \cap \beta) \cap X_\alpha\big) \cap X_\delta = \ell(S \cap\alpha) \cap X_\delta.
$$
Here we distinguish two cases. Either $\delta\not\in S$ and we simply use the inductive hypothesis, or $\delta\in S$. In the latter case, we have
\begin{multline*}
\big(\ell(S \cap \beta) \cap X_\alpha\big) \cap X_\delta =
\Big(\big(\ell(S \cap \delta) + A_\delta\big) \cap X_\delta\Big) \cap X_\alpha =
\\ =
\Big(\ell(S \cap \delta) + \big(A_\delta \cap X_\delta\big)\Big) \cap X_\alpha =
\ell(S \cap \delta) \cap X_\alpha =
\ell(S \cap \alpha) =
\ell(S \cap \alpha) \cap X_\delta,
\end{multline*}
using that $\Subobj(X)$ is modular, that $S$ is closed, and the inductive hypothesis.
\end{proof}

Next we observe that under some conditions, $\ell$ commutes with intersections.

\begin{lem} \label{lem:intersections}
If $(S_i \mid i \in I)$ is a family of elements of $\clL$, then $\ell\big(\bigcap_{i \in I} S_i\big) = \bigcap_{i \in I} \ell(S_i)$.
\end{lem}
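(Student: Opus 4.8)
The plan is to establish the nontrivial inclusion $\bigcap_i \ell(S_i) \subseteq \ell\big(\bigcap_i S_i\big)$; the reverse inclusion is immediate from monotonicity of $\ell$, since $\bigcap_i S_i \subseteq S_j$ forces $\ell\big(\bigcap_i S_i\big) \subseteq \ell(S_j)$ for every $j$. Write $T = \bigcap_i S_i$. Rather than attacking the inclusion head on, I would prove by transfinite induction on $\alpha \le \sigma$ the more flexible statement $P(\alpha)$: \emph{for every family $(S_i)_{i\in I}$ of closed subsets of $\sigma$ one has $\big(\bigcap_i \ell(S_i)\big) \cap X_\alpha \subseteq \ell\big(\bigcap_i S_i\big)$.} Quantifying over all families is what makes the induction succeed, because the successor step reapplies the hypothesis to the truncated family $(S_i \cap \delta)_i$, whose members are again closed (if $S$ is closed then so is $S \cap \beta$ for any $\beta$, since for $\alpha \in S\cap\beta$ one has $(S\cap\beta)\cap\alpha = S\cap\alpha$). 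The lemma then follows by taking $\alpha = \sigma$, as $X_\sigma = X \supseteq \bigcap_i \ell(S_i)$.

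The base case $\alpha = 0$ is trivial because $X_0 = 0$, and the limit case uses the upper continuity of $\Subobj(X)$ (Lemma~\ref{lem:upper cont and modular}): for a limit ordinal $\mu$ we have $X_\mu = \bigcup_{\beta<\mu} X_\beta$, so $\big(\bigcap_i\ell(S_i)\big)\cap X_\mu = \bigcup_{\beta<\mu}\big(\big(\bigcap_i\ell(S_i)\big)\cap X_\beta\big)$, and each term lies in $\ell(T)$ by the inductive hypothesis, hence so does the directed union.

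The heart of the argument is the successor step $\alpha = \delta+1$. Set $W = \bigcap_i\ell(S_i)$; applying Lemma~\ref{lem:hill intersections - prelim} to each factor, $W \cap X_{\delta+1} = \bigcap_i \ell\big(S_i\cap(\delta+1)\big)$. If $\delta \notin S_{i_0}$ for some $i_0$, then this intersection is contained in $\ell(S_{i_0}\cap\delta) \subseteq X_\delta$, whence $W\cap X_{\delta+1} \subseteq W \cap X_\delta \subseteq \ell(T)$ by the inductive hypothesis. The genuinely delicate case is $\delta \in S_i$ for all $i$, i.e. $\delta \in T$. Here $\ell\big(S_i\cap(\delta+1)\big) = \ell(S_i\cap\delta) + A_\delta$, so $P := W\cap X_{\delta+1} = \bigcap_i\big(\ell(S_i\cap\delta)+A_\delta\big)$ contains $A_\delta$; since also $P \subseteq X_{\delta+1} = X_\delta + A_\delta$, the modular law (Lemma~\ref{lem:upper cont and modular}) gives $P = (P\cap X_\delta) + A_\delta$. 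To compute $P\cap X_\delta$ I would again combine modularity with closedness: as $\ell(S_i\cap\delta)\subseteq X_\delta$, we get $\big(\ell(S_i\cap\delta)+A_\delta\big)\cap X_\delta = \ell(S_i\cap\delta) + (A_\delta\cap X_\delta)$, and closedness of $S_i$ at $\delta$ forces $A_\delta\cap X_\delta \subseteq \ell(S_i\cap\delta)$, so this equals $\ell(S_i\cap\delta)$. Intersecting over $i$ yields $P\cap X_\delta = \bigcap_i\ell(S_i\cap\delta)$, which already lies in $X_\delta$, so the inductive hypothesis $P(\delta)$ for the closed family $(S_i\cap\delta)_i$ gives $\bigcap_i\ell(S_i\cap\delta) \subseteq \ell\big(\bigcap_i(S_i\cap\delta)\big) = \ell(T\cap\delta)$. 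Therefore $P = (P\cap X_\delta)+A_\delta \subseteq \ell(T\cap\delta)+A_\delta \subseteq \ell(T)$, using $\delta\in T$ for $A_\delta\subseteq\ell(T)$.

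The main obstacle I anticipate is precisely this last case: the interplay of the modular law with the defining closedness condition to extract $P\cap X_\delta = \bigcap_i\ell(S_i\cap\delta)$ and so reduce to the inductive hypothesis applied to the truncated family. Everything else is bookkeeping, but one must keep the quantification over \emph{all} families of closed sets in force (so that $P(\delta)$ is available for $(S_i\cap\delta)_i$) and verify that truncations of closed sets are closed.
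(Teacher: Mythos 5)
Your proof is correct and follows essentially the same route as the paper's: transfinite induction along the filtration $(X_\alpha \mid \alpha\le\sigma)$, with Lemma~\ref{lem:hill intersections - prelim} and upper continuity of $\Subobj(X)$ handling the reduction to successor steps, and modularity plus the closedness condition closing the successor case. The paper phrases the induction as a minimal counterexample and finishes the successor step with a slightly different modularity squeeze (two comparable subobjects having equal joins and equal meets with $X_\delta$ must coincide), whereas you decompose $P=(P\cap X_\delta)+A_\delta$ explicitly; also, your quantification over all closed families is harmless but not actually needed, since by Lemma~\ref{lem:hill intersections - prelim} the hypothesis $P(\delta)$ for the truncated family $(S_i\cap\delta)_i$ coincides with $P(\delta)$ for the original family.
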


\begin{proof}
Clearly $\ell\big(\bigcap_{i \in I} S_i\big) \subseteq \bigcap_{i \in I} \ell(S_i)$. Suppose for the moment that the other inclusion does not hold and let $\beta\le\sigma$ be minimal \st
$$
\ell\Big(\bigcap_{i \in I} S_i\Big) \cap X_\beta \subsetneqq
\bigcap_{i \in I} \ell(S_i) \cap X_\beta
$$
Obviously $\beta>0$ and $\beta$ cannot be a limit ordinal since $\Subobj(X)$ is upper continuous. Thus, $\beta = \delta+1$ for some $\delta$ and $\ell\left(\bigcap_{i \in I} S_i\right) \cap X_\delta = \bigcap_{i \in I} \ell(S_i) \cap X_\delta$. It follows that $\bigcap_{i \in I} \ell(S_i) \cap X_\delta \subsetneqq \bigcap_{i \in I} \ell(S_i) \cap X_\beta$, so using Lemma~\ref{lem:hill intersections - prelim} we get
$$
\ell(S_i \cap \delta) =
\ell(S_i) \cap X_\delta \subsetneqq \ell(S_i) \cap X_\beta =
\ell(S_i \cap \beta)
$$
for each $i \in I$. In particular, $\delta$ must belong to $S_i$ for each $i \in I$. Hence $A_\delta \subseteq \ell\left(\bigcap_{i \in I} S_i\right) \subseteq \bigcap_{i \in I} \ell(S_i)$ and we have
$$
\bigg(\ell\Big(\bigcap_{i \in I} S_i\Big) \cap X_\beta\bigg) + X_\delta =
X_\beta =
\Big(\bigcap_{i \in I} \ell(S_i) \cap X_\beta\Big) + X_\delta.
$$
Since $\Subobj(X)$ is modular, the last equality together with $\ell\left(\bigcap_{i \in I} S_i\right) \cap X_\delta = \bigcap_{i \in I} \ell(S_i) \cap X_\delta$ implies that $\ell\left(\bigcap_{i \in I} S_i\right) \cap X_\beta = \bigcap_{i \in I} \ell(S_i) \cap X_\beta$, which is a contradiction.
\end{proof}

Now we are able to prove (H2) of Theorem~\ref{thm:generalized hill lemma}.

\begin{lem} \label{lem:complete lattice morphism}
Let $(S_i \mid i \in I)$ be a family of elements of $\clL$. Then both $\bigcup_{i \in I} S_i$ and $\bigcap_{i \in I} S_i$ belong to $\clL$. In particular, $\ell\colon \clL \to \Subobj(X)$ is a complete lattice homomorphism.
\end{lem}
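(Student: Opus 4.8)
The plan is to check directly from the definition of closedness that $\clL$ is stable under arbitrary unions and intersections; the complete-lattice-homomorphism claim then follows at once, using the already established Lemma~\ref{lem:intersections} for meets and the bare definition of $\ell$ for joins. Throughout I would use the reformulation of the closedness condition: a subset $S \subseteq \sigma$ is closed precisely when every $\alpha \in S$ satisfies $X_\alpha \cap A_\alpha \subseteq \ell(S \cap \alpha)$, since $\sum_{\gamma \in S,\, \gamma<\alpha} A_\gamma = \ell(S \cap \alpha)$.

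The union case is immediate. Writing $U = \bigcup_{i \in I} S_i$ and picking $\alpha \in U$, we have $\alpha \in S_j$ for some $j$, and closedness of $S_j$ gives $X_\alpha \cap A_\alpha \subseteq \ell(S_j \cap \alpha) \subseteq \ell(U \cap \alpha)$ because $S_j \cap \alpha \subseteq U \cap \alpha$ and $\ell$ is visibly monotone. Hence $U \in \clL$ with no further input.

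For intersections I would first record the auxiliary fact that whenever $S$ is closed, so is $S \cap \alpha$ for every $\alpha \le \sigma$: any $\beta \in S \cap \alpha$ has $\beta < \alpha$, so $(S \cap \alpha) \cap \beta = S \cap \beta$, and the closedness inequality of $S$ at $\beta$ transfers verbatim. Now set $T = \bigcap_{i \in I} S_i$ and take $\alpha \in T$. Closedness of each $S_i$ yields $X_\alpha \cap A_\alpha \subseteq \ell(S_i \cap \alpha)$ for all $i$, hence $X_\alpha \cap A_\alpha \subseteq \bigcap_{i \in I} \ell(S_i \cap \alpha)$. Since the sets $S_i \cap \alpha$ lie in $\clL$ by the auxiliary fact, Lemma~\ref{lem:intersections} applies and gives $\bigcap_{i \in I} \ell(S_i \cap \alpha) = \ell\big(\bigcap_{i \in I} (S_i \cap \alpha)\big) = \ell(T \cap \alpha)$. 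Thus $X_\alpha \cap A_\alpha \subseteq \ell(T \cap \alpha)$, so $T \in \clL$.

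Finally the homomorphism claim splits into joins and meets. For joins, $\ell\big(\bigcup_{i \in I} S_i\big) = \sum_{\alpha \in \bigcup_i S_i} A_\alpha = \sum_{i \in I} \sum_{\alpha \in S_i} A_\alpha = \sum_{i \in I} \ell(S_i)$ directly from the definition of $\ell$; for meets it is exactly Lemma~\ref{lem:intersections}. As joins and meets in $(\Pow(\sigma),\cup,\cap)$ restrict to $\clL$ and are sent to sums and intersections of subobjects, $\ell$ is a complete lattice homomorphism. The one delicate point is the intersection step: one must know $S_i \cap \alpha \in \clL$ \emph{before} invoking Lemma~\ref{lem:intersections}, and one cannot shortcut by applying Lemma~\ref{lem:hill intersections - prelim} to $T$ itself, since that lemma presupposes the very closedness of $T$ being established — which would be circular.
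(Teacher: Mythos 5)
Your proof is correct and follows essentially the same route as the paper: unions are handled directly from the definition of closedness, intersections via Lemma~\ref{lem:intersections} applied to the truncated sets, and the homomorphism claim from the definition of $\ell$ for joins and Lemma~\ref{lem:intersections} for meets. The one place where you are more explicit than the paper --- verifying that each $S_i \cap \alpha$ lies in $\clL$ \emph{before} invoking Lemma~\ref{lem:intersections}, rather than trying to route through Lemma~\ref{lem:hill intersections - prelim} applied to $T$ --- is precisely the step the paper leaves implicit, and your observation that the latter shortcut would be circular is correct.
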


\begin{proof}
It is easy to see that $S = \bigcup_{i \in I} S_i \in \clL$, since for any $\alpha \in S$ there is $i_0 \in I$ \st $\alpha \in S_{i_0}$ and we have
$$
X_\alpha \cap A_\alpha \subseteq
\sum_{^{\gamma \in S_{i_0},}_{\;\gamma < \alpha}} A_\gamma \subseteq
\sum_{^{\gamma \in S,}_{\gamma < \alpha}} A_\gamma.
$$
If on the other hand $T = \bigcap_{i \in I} S_i$ and $\alpha \in T$, then the assumptions and Lemma~\ref{lem:intersections} yield
$$
X_\alpha \cap A_\alpha \subseteq
\bigcap_{i \in I} \ell(S_i \cap \alpha) =
\ell(T \cap \alpha) =
\sum_{^{\gamma \in T,}_{\gamma < \alpha}} A_\gamma,
$$
so $T \in \clL$, too. Finally, $\ell: \clL \to \Subobj(X)$ is easily seen to be a complete lattice homomorphism using Lemma~\ref{lem:intersections}.
\end{proof}

At this point we are in a position to complete the proof of Theorem~\ref{thm:generalized hill lemma}. Before doing so, we point up that the only assumption we used to prove Lemmas~\ref{lem:hill intersections - prelim} to~\ref{lem:complete lattice morphism} was that $\Subobj(X)$ was a complete modular upper continuous lattice.

\begin{proof}[Proof of Theorem~\ref{thm:generalized hill lemma}]
Lemma~\ref{lem:complete lattice morphism} tells us that $\clL$ is a complete sublattice of the lattice $\big(\Pow(\sigma),\cup,\cap\big)$ and that $\ell\colon \clL \to \Subobj(X)$ is a complete lattice homomorphism. This proves (H2). Clearly, $\alpha \in \clL$ and $\ell(\alpha) = X_\alpha$ for each $\alpha\le\sigma$, so (H1) follows.

To establish (H3), we proceed exactly as in the proof of~\cite[Theorem 6(H3)]{StT}. Namely, we consider the filtration $(\bar F_\alpha \mid \alpha\le\lambda)$ of $\ell(T)/\ell(S)$, where $\bar F_\alpha$ is defined for each $\alpha\le\sigma$ by
$$ F_\alpha = \ell\big( S \cup (T \cap \alpha) \big) \quad \textrm{ and } \quad \bar F_\alpha = F_\alpha/\ell(S). $$
It follows that for given $\alpha<\sigma$, either $\alpha \in T \setminus S$ and
$$
\bar F_{\alpha+1}/\bar F_\alpha \cong
(F_\alpha + A_\alpha) / F_\alpha \cong
A_\alpha / (A_\alpha \cap F_\alpha) =
A_\alpha / (A_\alpha \cap X_\alpha) \cong
X_{\alpha+1}/X_\alpha,
$$
or $\bar F_{\alpha+1} = \bar F_\alpha$. The filtration $(N_\beta \mid \beta\le\tau)$ is obtained from $(\bar F_\alpha \mid \alpha\le\sigma)$ just by removing repetitions, and $b: T \setminus S \to \tau$ is defined in the obvious way.

Finally, (H4) is proved similarly as in~\cite[Theorem 6]{StT} on pages 310/311. Given a $<\kappa$-generated subobject $Y \subseteq X$, it is not difficult to see that there is a (not necessarily closed) subset $S' \subseteq \sigma$ of cardinality $<\kappa$ \st $Y \subseteq \ell(S')$. We will prove that each $S' \subseteq \sigma$ of cardinality $<\kappa$ is contained in a closed subset $S \subseteq \sigma$ of cardinality $<\kappa$. In view of Lemma~\ref{lem:complete lattice morphism}, it suffices to prove this for singletons $S = \{\beta\}$. This is achieved by induction on $\beta < \sigma$. If $\beta < \kappa$, we simply take $S = \beta+1$. Otherwise, Lemma~\ref{lem:size of kernels} applied on the short exact sequence
$$ 0 \la X_\beta \cap A_\beta \la A_\beta \la X_{\beta+1}/X_\beta \la 0 $$
tells us that $X_\beta \cap A_\beta$ is $<\kappa$-generated. By induction, there is $S_0 \in \clL$ of cardinality $<\kappa$ \st $X_\beta \cap A_\beta \subseteq \ell(S_0)$. We claim that $S = S_0 \cap \{\beta\}$ is a set we want. It is enough to show that $S$ is closed and it suffices to check the definition only for $\beta$. However, we have $X_\beta \cap A_\beta \subseteq X_\beta \cap \ell(S_0) = \ell(S_0 \cap \beta) \subseteq \sum_{\gamma \in S, \gamma<\beta} A_\gamma$ and the claim is proved. To finish, note that $\ell(S)$ is $<\kappa$-presentable by (H3) and Corollary~\ref{cor:small objects closed under small filtrations}.
\end{proof}

Before giving applications, let us point out that the image of $\ell$ is a complete \emph{distributive} sublattice of $\Subobj(X)$, while $\Subobj(X)$ itself is usually only modular. Now we start illustrating the potential of the theorem by proving certain non-trivial consequences.

\smallskip

We start with a relation to so-called Kaplansky classes (see~\cite{EnoLR,G1}). Kaplansky classes have been explicitly or implicitly used for proving approximation properties of flat modules or sheaves in various settings \cite{AEGO,BEE,EE,EEGO,EnoOy,EnoLR}, a fact which Gillespie~\cite{G1} and Hovey~\cite{H3} later applied in a crucial way to constructing monoidal model structures for complexes of sheaves.

\begin{defn} \label{def:kaplansky}
Let $\F \subseteq \G$ be a class of objects and $\kappa$ a regular cardinal. Then $\F$ is said to be a \emph{$\kappa$-Kaplansky class} if for any $F \in \F$ and a $<\kappa$-generated subobject $X \subseteq F$, there exists a $<\kappa$-presentable subobject $Y$ of $F$ \st $X \subseteq Y \subseteq F$ and $Y,F/Y \in \F$.
We say that $\F$ is a \emph{Kaplansky class} if it is $\kappa$-Kaplansky for some regular cardinal $\kappa$.
\end{defn}

We prove as an easy corollary of Theorem~\ref{thm:generalized hill lemma} that a deconstructible class is always a Kaplansky class, a result which for instance considerably simplifies Gillespie's arguments (especially those in~\cite[\S4]{G1}). For module categories an analogous result has been stated as~\cite[Lemmas 6.7 and 6.9]{HT}, while for categories of quasi-coherent sheaves this was implicitly proved in~\cite{EGPT}.

\begin{cor} \label{cor:dec_kapl}
Let $\F$ be a class of objects in a Grothendieck category $\G$. Then the following hold:
\begin{enumerate}
\item If $\F$ is a deconstructible class, then there is an infinite regular cardinal $\kappa$ \st $\F$ is a $\lambda$-Kaplansky class for each $\lambda\ge\kappa$.
\item If $\F$ is a Kaplansky class and closed under taking direct limits, then it is deconstructible.
\end{enumerate}
\end{cor}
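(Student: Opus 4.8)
The plan is to prove the two implications of Corollary~\ref{cor:dec_kapl} separately, with part (1) resting directly on the Hill Lemma and part (2) using a transfinite-induction argument that repeatedly applies the Kaplansky property.

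For part (1), suppose $\F = \Filt\clS$ for a set $\clS$. First I would choose an infinite regular cardinal $\kappa$ large enough that $\G$ is locally $<\kappa$-presentable and that every object of $\clS$ is $<\kappa$-presentable; such a $\kappa$ exists because $\clS$ is a set and $\G$ has a generating set, and any $\lambda \ge \kappa$ works equally well since larger regular cardinals only relax the presentability bounds. Now fix $F \in \F$ and a $<\kappa$-generated subobject $X \subseteq F$. Take any $\clS$-filtration of $F$ and apply Theorem~\ref{thm:generalized hill lemma}. By (H4), there is $S \in \clL$ of cardinality $<\kappa$ with $X \subseteq \ell(S) \subseteq F$, and by (H3) the object $\ell(S)$ is $\clS$-filtered, hence lies in $\F$, and is $<\kappa$-presentable. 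Setting $Y = \ell(S)$, it remains to see that $F/Y \in \F$: this is precisely (H3) applied to the pair $S \subseteq \sigma$, which exhibits $\ell(\sigma)/\ell(S) = F/Y$ as $\clS$-filtered. Thus $Y$ witnesses the $\kappa$-Kaplansky property, and the same argument works verbatim for each $\lambda \ge \kappa$.

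For part (2), assume $\F$ is $\kappa$-Kaplansky and closed under direct limits; I want to produce a set $\clS$ with $\F = \Filt\clS$. The natural candidate is $\clS = $ the set of (representatives of isomorphism classes of) $<\kappa$-presentable objects lying in $\F$. One inclusion, $\Filt\clS \subseteq \F$, follows because $\F$ is closed under extensions and direct limits (hence under transfinite extensions, as in the proof of Lemma~\ref{lem:deconstr_filt}) and $\clS \subseteq \F$. For the reverse inclusion, fix $F \in \F$ and build an $\clS$-filtration by transfinite recursion: given $Y_\alpha \subseteq F$ already constructed with $Y_\alpha, F/Y_\alpha \in \F$, pick any ``new'' $<\kappa$-generated subobject not yet captured, apply the Kaplansky property to $Y_\alpha \subseteq F$ together with that subobject to get a $<\kappa$-presentable $Y_{\alpha+1}$ with $Y_\alpha \subseteq Y_{\alpha+1} \subseteq F$ and $Y_{\alpha+1}, F/Y_{\alpha+1} \in \F$, and at limits take direct unions. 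The successive quotients $Y_{\alpha+1}/Y_\alpha$ must be checked to lie in $\clS$, and the construction must be arranged so that it exhausts $F$.

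The main obstacle, and the point deserving the most care, is this last inductive step. Two issues intertwine. First, the Kaplansky property as stated gives a $<\kappa$-presentable $Y_{\alpha+1}$ with $Y_{\alpha+1}, F/Y_{\alpha+1} \in \F$, but one must verify that the quotient $Y_{\alpha+1}/Y_\alpha$ is again in $\clS$, i.e.\ that it is $<\kappa$-presentable and in $\F$; presentability of the quotient should follow from that of $Y_{\alpha+1}$ together with $Y_\alpha$ via a short-exact-sequence argument, while membership in $\F$ is more delicate and may require applying the Kaplansky property \emph{relatively} inside $F/Y_\alpha$ rather than inside $F$. Second, one must ensure the recursion terminates with $\bigcup_\alpha Y_\alpha = F$; the clean way is to well-order a generating family of $<\kappa$-generated subobjects of $F$ and, at stage $\alpha$, force the $\alpha$-th member into $Y_{\alpha+1}$, so that by construction every such subobject is eventually absorbed and the union is all of $F$. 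Handling the quotients' membership in $\F$ is where I expect the real work to concentrate, and it is probably cleanest to run the whole induction ``downstairs'' in the categories $F/Y_\alpha$, using that $\F$-membership of both a subobject and its quotient is preserved.
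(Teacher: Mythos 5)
Your part (1) is correct and is exactly the paper's argument: apply Theorem~\ref{thm:generalized hill lemma}(H4) to get $S\in\clL$ of cardinality $<\lambda$ with $X\subseteq\ell(S)$, then (H3) (applied to $\emptyset\subseteq S$ and to $S\subseteq\sigma$) together with Corollary~\ref{cor:small objects closed under small filtrations} gives $Y=\ell(S)$ $<\lambda$-presentable with $Y,\,F/Y\in\Filt\clS=\F$; and the hypotheses of the Hill Lemma for $\lambda$ are inherited from those for $\kappa$. Nothing to add there.

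For part (2) the paper gives no argument of its own (it cites the proof of \cite[Lemma 4.3]{G1}), and your plan is the standard one, but the successor step as you first write it cannot work: once $Y_\alpha$ is no longer $<\kappa$-generated, you cannot apply the $\kappa$-Kaplansky property to ``$Y_\alpha$ together with a new subobject'' inside $F$, and a $<\kappa$-presentable $Y_{\alpha+1}$ containing $Y_\alpha$ does not exist. The fix is precisely the ``relative'' version you name at the end but do not carry out, and it should be the whole construction, not a patch: having $F/Y_\alpha\in\F$, take a $<\kappa$-generated $W\subseteq F$ with $W\not\subseteq Y_\alpha$, apply the Kaplansky property \emph{in $F/Y_\alpha$} to the $<\kappa$-generated subobject $(Y_\alpha+W)/Y_\alpha$ to obtain a $<\kappa$-presentable $\bar Y\in\F$ with $(Y_\alpha+W)/Y_\alpha\subseteq\bar Y$ and $(F/Y_\alpha)/\bar Y\in\F$, and let $Y_{\alpha+1}$ be the preimage of $\bar Y$ in $F$. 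Then $Y_{\alpha+1}/Y_\alpha=\bar Y$ is automatically in $\clS$ (so the delicate verification you worry about disappears), and $F/Y_{\alpha+1}\cong(F/Y_\alpha)/\bar Y\in\F$, so the recursion can continue. The hypothesis that $\F$ is closed under direct limits is used exactly at limit ordinals $\mu$, to guarantee $F/Y_\mu=\varinjlim_{\alpha<\mu}F/Y_\alpha\in\F$; exhaustion follows since at every successor step a subobject not contained in $Y_\alpha$ is absorbed, so the chain strictly increases until it reaches $F$. The inclusion $\Filt\clS\subseteq\F$ is as you say. With the induction rewritten in this relative form your argument matches the cited one; as currently written, the gap is that the step you defer as ``the real work'' is in fact the step that invalidates your first formulation.
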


\begin{proof}
(1) is an easy consequence of Theorem~\ref{thm:generalized hill lemma}(H4) and (H3) and Corollary~\ref{cor:small objects closed under small filtrations}, while part (2) is rather standard and we refer to the proof of~\cite[Lemma 4.3]{G1} for a detailed argument.
\end{proof}

\begin{rem}
The condition of $\F$ being closed under direct limits in Corollary~\ref{cor:dec_kapl}(2) cannot be easily removed. A recent result of Herbera and Trlifaj~\cite[Example 6.8]{HT} shows that the class of flat Mittag-Leffler modules over the endomorphism ring of an infinite dimensional vector space is Kaplansky, but not deconstructible.
\end{rem}

\smallskip

Further, we show that deconstructibility is kept under some natural operations on classes---under the closure under direct summands and under set-indexed intersections.

\begin{prop} \label{prop:add_deconstr}
Given an uncountable regular cardinal $\kappa$ and a locally $<\kappa$-presentable Grothendieck category $\G$, the following hold:
\begin{enumerate}
\item Let $\E = \Filt\clS$ in $\G$, where $\clS$ is a set of $<\kappa$-presentable objects, and let $\F$ be the class consisting of all direct summands of objects from $\E$. Then there is a set $\clS'$ of $<\kappa$-presentable objects \st $\F = \Filt{\clS'}$.

\item Let $(\E_i \mid i \in I)$ be a collection of classes of objects of $\G$ \st $\card{I}<\kappa$. Suppose that for each $i \in I$, there is a set $\clS_i$ of $<\kappa$-presentable objects \st $\E_i = \Filt{\clS_i}$. Then there is a set $\clS'$ of $<\kappa$-presentable objects \st $\bigcap_{i \in I} \E_i = \Filt{\clS'}$.
\end{enumerate}
\end{prop}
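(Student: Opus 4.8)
The plan is to treat both parts by exhibiting an explicit candidate set $\clS'$, verifying the easy inclusion by composing filtrations (Lemma~\ref{lem:deconstr_filt}), and deriving the hard inclusion from Theorem~\ref{thm:generalized hill lemma} via a closure argument that produces a filtration compatible with the extra structure at hand: a splitting idempotent in part (1), and several simultaneous Hill lattices in part (2). In both cases $\clS'$ will be a set because $\G$ is locally $<\kappa$-presentable, so that the $<\kappa$-presentable objects form a set up to isomorphism.

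For part (1), I would let $\clS'$ be a set of representatives of the $<\kappa$-presentable objects occurring as direct summands of $<\kappa$-presentable members of $\E = \Filt\clS$. The inclusion $\Filt{\clS'}\subseteq\F$ is easy: if $Z$ carries an $\clS'$-filtration with factors $P_\alpha$, and $P_\alpha\oplus Q_\alpha\in\E$, then $Z\oplus\bigoplus_\alpha Q_\alpha$ is $\E$-filtered, hence lies in $\E$ by Lemma~\ref{lem:deconstr_filt}, so $Z$ is a summand of an object of $\E$. For the converse I would write $F\oplus G=E\in\E$, let $p\colon E\to E$ be the idempotent with image $F$, and apply Theorem~\ref{thm:generalized hill lemma} to a fixed $\clS$-filtration of $E$, obtaining $\clL$ and $\ell$. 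The crux is to build an increasing continuous chain $(S_\beta)_{\beta\le\tau}$ in $\clL$ with $S_0=\emptyset$ and $\ell(S_\tau)=E$ such that every $\ell(S_\beta)$ is $p$-invariant and each $\ell(S_{\beta+1})/\ell(S_\beta)$ is $<\kappa$-presentable. Then $F_\beta=\ell(S_\beta)\cap F=p\,\ell(S_\beta)$ filters $F$, and since $\ell(S_{\beta+1})/\ell(S_\beta)\cong F_{\beta+1}/F_\beta\oplus G_{\beta+1}/G_\beta$ is $\clS$-filtered and $<\kappa$-presentable by (H3), each factor $F_{\beta+1}/F_\beta$ lies in $\clS'$.

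The successor step of this construction is the main obstacle. Given $S_\beta$ I would build an $\omega$-chain $S_\beta=T_0\subseteq T_1\subseteq\cdots$ in $\clL$, where $T_{n+1}$ is chosen via (H4) of cardinality $<\kappa$ with $\ell(T_{n+1})\supseteq p\,\ell(T_n)+A_{\mu_n}$; here $A_{\mu_n}$, for $\mu_n$ the least ordinal outside $T_n$, guarantees that the chain eventually exhausts $\sigma$. Setting $S_{\beta+1}=\bigcup_n T_n$, Lemma~\ref{lem:complete lattice morphism} keeps it in $\clL$, the inclusions $p\,\ell(T_n)\subseteq\ell(T_{n+1})$ make $\ell(S_{\beta+1})=\bigcup_n\ell(T_n)$ $p$-invariant, and $\card{S_{\beta+1}\setminus S_\beta}<\kappa$ because $\kappa$ is uncountable and regular; the quotient is then $<\kappa$-presentable by (H3) and Corollary~\ref{cor:small objects closed under small filtrations}. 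The limit steps are unions and are automatic. The hypotheses that $\kappa$ be uncountable and regular are exactly what keeps an $\omega$-indexed union of $<\kappa$-small pieces $<\kappa$-small.

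Part (2) follows the same template. I would take $\clS'$ to be a set of representatives of the $<\kappa$-presentable objects lying in $\bigcap_{i\in I}\E_i$; then $\Filt{\clS'}\subseteq\bigcap_i\E_i$ is immediate, since $\clS'\subseteq\E_i$ gives $\Filt{\clS'}\subseteq\Filt{\E_i}=\E_i$ by Lemma~\ref{lem:deconstr_filt}. For the converse, given $X\in\bigcap_i\E_i$, fix an $\clS_i$-filtration and Hill data $\clL_i,\ell_i$ for each $i$, and build an increasing continuous chain $(Y_\beta)$ of subobjects of $X$ lying \emph{simultaneously} in the image of every $\ell_i$ and with $<\kappa$-presentable successive quotients; by (H3) in each $\clL_i$, the quotient $Y_{\beta+1}/Y_\beta$ is then $\clS_i$-filtered for all $i$, so it lies in $\bigcap_i\E_i$, hence in $\clS'$. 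The successor step again uses an $\omega$-chain: given $Y_\beta$, at stage $n$ use (H4) in each $\clL_i$ to find a $<\kappa$-presentable $\ell_i(S_n^i)\supseteq Z_n$, set $Z_{n+1}=\sum_{i\in I}\ell_i(S_n^i)$, and put $Y_{\beta+1}=\bigcup_n Z_n$. Because each $\ell_i$ preserves unions by (H2), we get $Y_{\beta+1}=\ell_i\big(\bigcup_n S_n^i\big)$, which lands in the image of every $\ell_i$, while $\card{I}<\kappa$ together with $\kappa$ regular uncountable keeps all the pieces $<\kappa$-small, so the quotient is $<\kappa$-presentable by Corollary~\ref{cor:small objects closed under small filtrations}. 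As in part (1), the essential difficulty is precisely this simultaneous closure: arranging a single chain compatible with all the structures at once while controlling the size of the successive quotients.
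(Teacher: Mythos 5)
Your proposal follows the paper's own route almost exactly: in both parts you invoke Theorem~\ref{thm:generalized hill lemma}, take $\clS'$ to be a representative set of suitable $<\kappa$-presentable objects, and build a new filtration whose terms lie in the Hill image(s) and carry the extra invariance ($p$-invariance in (1), membership in every $\clH_i$ in (2)), closing up at each successor step by a chain of $<\kappa$ many applications of (H4) and controlling the quotients via (H3) and Corollary~\ref{cor:small objects closed under small filtrations}. Part (1) is essentially the paper's argument verbatim, with the idempotent $p$ replacing the pair of projections $\pi_X,\pi_Y$ and with the explicit interleaving verification of $\Filt{\clS'}\subseteq\F$ (which the paper leaves implicit) -- this is sound. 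In part (2) your one genuine deviation is welcome: you close under all $i\in I$ simultaneously at each of $\omega$ stages by forming $\sum_{i\in I}\ell_i(S^i_n)$ (legitimate since $\card{I}<\kappa$ and $\kappa$ is regular), whereas the paper cycles through $I$ one index at a time along a chain of length $\tau<\kappa$ using a surjection $b\colon\tau\to I$ with cofinal fibres. Your version dispenses with that book-keeping device.

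There is, however, one step in part (2) that fails as literally written. You ask (H4) for a $<\kappa$-presentable $\ell_i(S^i_n)$ with $\ell_i(S^i_n)\supseteq Z_n$; but (H4) applies only to $<\kappa$-generated subobjects, and once $Y_\beta$ is large no $Z_n\supseteq Y_\beta$ is $<\kappa$-generated, so such an $\ell_i(S^i_n)$ cannot exist (and your formula $Y_{\beta+1}=\ell_i\bigl(\bigcup_n S^i_n\bigr)$ would then fail to contain $Y_\beta$). The repair is standard and is exactly what you already do in part (1): write $Z_n=Y_\beta+W_n$ with $W_n$ $<\kappa$-generated, apply (H4) only to $W_n$ to get $S^i_n$ of cardinality $<\kappa$, and set $Z_{n+1}=Y_\beta+\sum_{i\in I}\ell_i(S^i_n)$; this stays in each $\clH_i$ because $Y_\beta=\ell_i(T^i_\beta)$ by induction and $\clH_i$ is closed under sums by (H2), and $Y_{\beta+1}/Y_\beta$ is then $<\kappa$-presentable by (H3) applied to $T^i_\beta\subseteq T^i_\beta\cup\bigcup_n S^i_n$ together with Corollary~\ref{cor:small objects closed under small filtrations}, since the added index set has cardinality $<\kappa$ by regularity and uncountability of $\kappa$. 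With this local correction your argument agrees with the paper's.
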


\begin{proof}
(1) The argument is analogous to~\cite[Lemma 9]{StT}. Suppose that $X \in \F$, that is, there is $Y \in \G$ \st $Z = X\oplus Y$ is $\clS$-filtered. We denote by $\pi_X: Z \to X$ and $\pi_Y: Z \to Y$ the corresponding split projections and by $\clS'$ a representative set of all $<\kappa$-presentable objects of $\F$, and we show that $X$ is $\clS'$-filtered.

Let $\ell: \clL \to \Subobj(Z)$ be a complete lattice homomorphism as in Theorem~\ref{thm:generalized hill lemma} and let $\clH = \{\ell(S) \mid S \in \clL\}$. We first show that there is a filtration $(Z_\alpha \mid \alpha\le\sigma)$ of $Z$ \st
\begin{enumerate}
\item[(i)] $Z_\alpha \in \clH$,
\item[(ii)] $Z_\alpha = \pi_X(Z_\alpha) + \pi_Y(Z_\alpha)$, and
\item[(iii)] $Z_{\alpha+1}/Z_\alpha$ is $<\kappa$-presentable for each $\alpha<\sigma$.
\end{enumerate}
To this end, we put $Z_0 = 0$ and $Z_\alpha = \bigcup_{\gamma<\alpha} Z_\gamma$ for limit ordinals. Suppose we have constructed $Z_\alpha \subsetneqq Z$ and wish to construct $Z_{\alpha+1}$. Note that $\G$ being locally $<\kappa$-presentable ensures that there is a $<\kappa$-generated subobject $W \subseteq Z$ \st $W \not\subseteq Z_\alpha$. Using Theorem~\ref{thm:generalized hill lemma}(H4), we find $S \in \clL$ of cardinality $<\kappa$ \st $W \subseteq \ell(S)$. Denoting $W' = \ell(S)$, $Q_0 = Z_\alpha + W'$ and combining (H2) and (H3) with Corollary~\ref{cor:small objects closed under small filtrations}, we observe that $Q_0 \in \clH$, $Z_\alpha + W \subseteq Q_0$ and $W', Q_0/Z_\alpha$ are $<\kappa$-presentable. We also have:
$$
Q_0 \subseteq
\pi_X(Q_0) + \pi_Y(Q_0) =
\pi_X(Z_\alpha+W') + \pi_Y(Z_\alpha+W') =
Z_\alpha + \big(\pi_X(W') + \pi_Y(W')\big).
$$
Since $\pi_X(W') + \pi_Y(W')$ is $<\kappa$-generated, we can use the same argument as above to find $Q_1 \in \clH$ \st $\pi_X(Q_0) + \pi_Y(Q_0) \subseteq Q_1$ and $Q_1/Z_\alpha$ is $<\kappa$-presentable. By proceeding inductively, we find a chain of subobjects $Q_0 \subseteq Q_1 \subseteq Q_2 \subseteq \dots$ from $\clH$ \st $Q_i \subseteq \pi_X(Q_i) + \pi_Y(Q_i) \subseteq Q_{i+1}$ and $Q_i/Z_\alpha$ is $<\kappa$-presentable for each $i<\omega$. It is easy to see that $Z_{\alpha+1} = \bigcup_{i<\omega} Q_i$ satisfies conditions (i)--(iii) above.

Having constructed the filtration $(Z_\alpha \mid \alpha\le\sigma)$, it is not difficult to prove that
$$
Z_{\alpha+1}/Z_\alpha \cong
\pi_X(Z_{\alpha+1})/\pi_X(Z_\alpha) \oplus \pi_Y(Z_{\alpha+1})/\pi_Y(Z_\alpha)
$$
for each $\alpha<\sigma$; we refer to~\cite[p. 314]{StT}. It follows that $(\pi_X(Z_\alpha) \mid \alpha\le\sigma)$ is an $\clS'$-filtration of $X$, as desired.

(2) First we claim that there is a limit ordinal $\tau<\kappa$ and a map $b: \tau \to I$ \st $b^{-1}(i)$ is an unbounded subset in $\tau$ for each $i \in I$. Without loss of generality, we may assume that $I = \mu$ is an infinite cardinal number. Let $\tau$ be the ordinal type of $\omega \times \mu$ with the lexicographical ordering, and denote by $c: \tau \to \omega \times \mu$ the order isomorphism. Then we can define $b: \tau \to \mu = I$ as the composition of $c$ with the projection $\omega \times \mu \to \mu$ onto the second component. It is straightforward to check that $b$ has the required properties and the claim is proved.

Suppose now that $X \in \bigcap_{i \in I} \E_i$. Then we can for each $i \in I$ fix an $\clS_i$-filtration of $X$ and, using Theorem~\ref{thm:generalized hill lemma}, a corresponding complete lattice homomorphism $\ell_i: \clL_i \to \Subobj(X)$. Denote $\clH_i = \{ \ell_i(S) \mid S \in \clL_i \}$ for each $i \in I$, and let $\clH = \bigcap_{i \in I} \clH_i$. It is easy to see from Theorem~\ref{thm:generalized hill lemma}(H2) and~(H3) that $\clH$ is closed under taking arbitrary sums and intersections, and that for each $N,P \in \clH$ with $N \subseteq P$ we have $P/N \in \bigcap_{i \in I} \E_i$.
Next we will inductively construct a filtration $(X_\alpha \mid \alpha<\sigma)$ of $X$ \st
\begin{enumerate}
\item[(a)] $X_\alpha \in \clH$, and
\item[(b)] $X_{\alpha+1} / X_\alpha$ is $<\kappa$-presentable for each $\alpha<\sigma$.
\end{enumerate}
If we succeed to do so, it will easily follow that $\bigcap_{i \in I} \E_i = \Filt{\clS'}$ for a representative set $\clS'$ of all $<\kappa$-presentable objects in $\bigcap_{i \in I} \E_i$ (cf.\ also Lemma~\ref{lem:deconstr_filt}). 

Again, we put $X_0 = 0$ and $X_\alpha = \bigcup_{\gamma<\alpha} X_\gamma$ for limit ordinals $\alpha$. Suppose now we have constructed $X_\alpha \subsetneqq X$. Then there is a $<\kappa$-generated subobject $W \subseteq X$ \st $W \not\subseteq X_\alpha$. Using Theorem~\ref{thm:generalized hill lemma}(H2)--(H4) and Corollary~\ref{cor:small objects closed under small filtrations}, we obtain $Q_0 \in \clH_{b(0)}$ \st $X_\alpha+W \subseteq Q_0$ and $Q_0/X_\alpha$ is $<\kappa$-presentable. Therefore, there is $W' \subseteq X$ which is $<\kappa$-generated and \st $X_\alpha+W' = Q_0$. Using the same argument, we get $Q_1 \in \clH_{b(1)}$ \st $Q_0 = X_\alpha+W' \subseteq Q_1$ and $Q_1/X_\alpha$ is $<\kappa$-presentable. Proceeding further in this way, we construct a chain $(Q_\iota \mid \iota<\tau)$ of subobjects of $X$ \st $Q_\iota \in \clH_{b(\iota)}$ and $Q_\iota/X_\alpha$ is $<\kappa$-presentable for each $\iota < \tau$. It easily follows that $X_{\alpha+1} = \bigcup_{\iota<\tau} Q_\iota$ satisfies conditions (a) and (b) above. This finishes the construction and also the proof of the theorem.
\end{proof}

\section{Bounding the length of a filtration}
\label{sec:bounding length}

Given a set $\clS$ in a Grothendieck category $\G$, let us denote by $\Sum\clS$ the class of all coproducts of copies of objects from $\clS$. If we now have an $\clS$-filtration $(X_\alpha \mid \alpha\le\sigma)$ of some $X \in \G$, there is of course no obvious bound on $\sigma$. However, we show that we can always rearrange the filtration to have a bound on its length independent of $X$, if we allow the factors to be in $\Sum\clS$ instead of $\clS$. This extends a result (unpublished at the time of writing of this paper) by Enochs.

\begin{thm} \label{thm:bounding length}
Let $\kappa$ be an infinite regular cardinal and $\G$ a locally $<\kappa$-presentable Grothendieck category. Given a class $\clS$ of $<\kappa$-presentable objects and an object $X\in\Filt\clS$, then $X$ has a $(\Sum\clS)$-filtration of the form $(X'_\beta \mid \beta\le\kappa)$. 
\end{thm}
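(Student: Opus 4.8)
The plan is to apply the Generalized Hill Lemma to the given filtration and then to reorganize the closed subsets of $\sigma$ into at most $\kappa$ ``layers'', each of which splits off as a genuine coproduct. Concretely, I would fix an $\clS$-filtration $(X_\alpha\mid\alpha\le\sigma)$ of $X$ and note that its factors $X_{\alpha+1}/X_\alpha$ range over a \emph{set} $\clS_0$ of $<\kappa$-presentable objects with $\Sum\clS_0\subseteq\Sum\clS$, so that it suffices to produce a $(\Sum\clS_0)$-filtration. Applying Theorem~\ref{thm:generalized hill lemma} to this filtration provides the lattice $\clL$ of closed subsets of $\sigma$, the complete lattice homomorphism $\ell\colon\clL\to\Subobj(X)$, and the family $(A_\alpha\mid\alpha<\sigma)$ of $<\kappa$-generated subobjects with $X_{\alpha+1}=X_\alpha+A_\alpha$ used to build $\ell$; everything below takes place inside this data.

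The technical core is a \emph{splitting lemma}: if $S\subseteq T$ are closed and $A_\alpha\cap X_\alpha\subseteq\ell(S)$ holds for every $\alpha\in T\setminus S$, then $\ell(T)/\ell(S)\in\Sum\clS$. To prove it I would take the $\clS$-filtration of $\ell(T)/\ell(S)$ produced inside the proof of (H3), with $\bar F_\alpha=\ell\big(S\cup(T\cap\alpha)\big)/\ell(S)$, and show that for each $\alpha\in T\setminus S$ the image $C_\alpha$ of $A_\alpha$ in $\ell(T)/\ell(S)$ satisfies $C_\alpha+\bar F_\alpha=\bar F_{\alpha+1}$ and $C_\alpha\cap\bar F_\alpha=0$. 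The first equality is immediate; for the second, using $\ell(S)\subseteq F_\alpha$, modularity of $\Subobj(X)$, and the identity $A_\alpha\cap F_\alpha=A_\alpha\cap X_\alpha$ already established inside the proof of (H3), one computes $C_\alpha\cap\bar F_\alpha=\big(\ell(S)+(A_\alpha\cap X_\alpha)\big)/\ell(S)$, which vanishes precisely because of the hypothesis $A_\alpha\cap X_\alpha\subseteq\ell(S)$. Hence each $C_\alpha$ maps isomorphically onto the factor $X_{\alpha+1}/X_\alpha\in\clS$ and meets the sum of the earlier $C_\gamma$'s trivially, so the standard criterion for a well-ordered family gives $\ell(T)/\ell(S)=\bigoplus_{\alpha\in T\setminus S}C_\alpha\in\Sum\clS$.

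It remains to exhibit a continuous chain of closed sets $\emptyset=S_0\subseteq S_1\subseteq\dots\subseteq S_\kappa=\sigma$ whose successive steps satisfy the hypothesis of the splitting lemma, and I would obtain it by assigning levels. For each $\alpha<\sigma$ the subobject $A_\alpha\cap X_\alpha$ is $<\kappa$-generated (apply Lemma~\ref{lem:size of kernels} to $0\to X_\alpha\cap A_\alpha\to A_\alpha\to X_{\alpha+1}/X_\alpha\to0$), so by (H4) combined with Lemma~\ref{lem:hill intersections - prelim} there is a closed set $D_\alpha\subseteq\alpha$ with $\card{D_\alpha}<\kappa$ and $A_\alpha\cap X_\alpha\subseteq\ell(D_\alpha)$. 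I then define $\lev\colon\sigma\to\kappa$ by transfinite recursion, $\lev(\alpha)=\sup\{\lev(\gamma)+1\mid\gamma\in D_\alpha\}$; since $\card{D_\alpha}<\kappa$ and $\kappa$ is regular, each $\lev(\alpha)$ is again $<\kappa$. Setting $S_\beta=\{\alpha<\sigma\mid\lev(\alpha)<\beta\}$ for $\beta\le\kappa$, the $S_\beta$ are increasing and continuous with $S_0=\emptyset$ and $S_\kappa=\sigma$; each $S_\beta$ is closed because $\gamma\in D_\alpha$ forces $\lev(\gamma)<\lev(\alpha)$, whence $D_\alpha\subseteq S_\beta\cap\alpha$ whenever $\alpha\in S_\beta$; and for the step $S_\beta\subseteq S_{\beta+1}$, every $\alpha$ with $\lev(\alpha)=\beta$ has $D_\alpha\subseteq S_\beta$, so $A_\alpha\cap X_\alpha\subseteq\ell(D_\alpha)\subseteq\ell(S_\beta)$, which is exactly the splitting hypothesis.

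Finally I would set $X'_\beta=\ell(S_\beta)$. Using (H1), (H2) and the splitting lemma one checks that $(X'_\beta\mid\beta\le\kappa)$ satisfies Definition~\ref{def:filtr} with all factors in $\Sum\clS_0\subseteq\Sum\clS$ (empty layers simply give $X'_{\beta+1}=X'_\beta$, with factor the empty coproduct $0$), so it is the desired $(\Sum\clS)$-filtration of length $\kappa$. I expect the main obstacle to be the leveling step: the dependency sets $D_\alpha$ must be located \emph{below} $\alpha$, and one must invoke the regularity of $\kappa$ to guarantee that the recursion defining $\lev$ never escapes $\kappa$ — this is exactly what bounds the length of the new filtration. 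The splitting lemma, though demanding a careful lattice computation, becomes routine once the identity $A_\alpha\cap F_\alpha=A_\alpha\cap X_\alpha$ from (H3) is available.
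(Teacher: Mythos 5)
Your proposal is correct and follows essentially the same route as the paper's proof: the paper likewise applies the Hill Lemma, picks for each $\alpha$ a closed set $S_\alpha\ni\alpha$ of size $<\kappa$ with $A_\alpha\subseteq\ell(S_\alpha)$ and $S_\alpha\subseteq\alpha+1$, defines the same level function by transfinite recursion (regularity of $\kappa$ bounding the levels exactly as you say), sets $X'_\beta=\ell(T_\beta)$ for the layers $T_\beta$ of elements of level $<\beta$, and verifies $X'_{\beta+1}/X'_\beta\cong\bigoplus_{\lev(\gamma)=\beta}X_{\gamma+1}/X_\gamma$ via the same two modular-lattice identities that you package into your ``splitting lemma''. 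The only cosmetic differences are that you cover $A_\alpha\cap X_\alpha$ by a closed $D_\alpha\subseteq\alpha$ rather than covering $A_\alpha$ itself, and that you isolate the coproduct-splitting step as a reusable lemma.
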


\begin{rem} \label{rem:bounding length}
It is worthwhile to look closer at the case $\kappa=\aleph_0$. The theorem says that given a locally finitely presentable Grothendieck category $\G$, a set $\clS\subseteq\G$ of finitely presentable objects and an $\clS$-filtered object $X$, then $X$ is the union of a countable chain
$$ 0 = X'_0 \subseteq X'_1 \subseteq X'_2 \subseteq X'_3 \subseteq \cdots $$
\st $X'_{n+1}/X'_n \in \Sum\clS$ for each $n\ge 0$. As a simple model situation, we can think of a right noetherian ring, where any semiartinian right module has a countable filtration by semisimple modules.
\end{rem}

\begin{proof}[Proof of Theorem~\ref{thm:bounding length}]
Let $(X_\alpha \mid \alpha\le\sigma)$ be an $\clS$-filtration of $X$ and $\ell: \clL \to \Subobj(X)$ be a complete lattice homomorphism as in Theorem~\ref{thm:generalized hill lemma}. Let us further for each $\alpha<\sigma$ fix a $<\kappa$-generated subobject $A_\alpha \subseteq X$ \st $X_{\alpha+1} = X_\alpha+A_\alpha$. Theorem~\ref{thm:generalized hill lemma}(H4) allows us to fix for each $\alpha<\sigma$ a set $S_\alpha \in \clL$ \st $\alpha \in S_\alpha$, $\card{S_\alpha} < \kappa$ and $A_\alpha \subseteq \ell(S_\alpha)$. By (H1) and (H2), we may also assume that $S_\alpha \subseteq (\alpha+1)$, by possibly passing from $S_\alpha$ to $S_\alpha\cap(\alpha+1)$.

With the notation above, we inductively define a map $\lev\colon \sigma \to \kappa$ by putting
$$
\lev(\alpha) =
\sup\{\lev(\gamma)+1 \mid (\gamma<\alpha) \;\&\; (\gamma \in S_\alpha)\},
$$
and call $\lev(\alpha)$ the \emph{level} of $\alpha$. Here, $\sup\emptyset=0$ by definition. Note that $\lev(\alpha)$ is well-defined since $\kappa$ is regular and $\card{S_\alpha} < \kappa$ for each $\alpha<\sigma$. Let us denote $T_\beta = \{\gamma \mid (\gamma<\sigma) \; \& \; (\lev(\gamma) < \beta) \}$; one readily checks that $T_\beta = \bigcup \{S_\gamma \mid (\gamma<\sigma) \; \& \; (\lev(\gamma) < \beta) \}$. We claim that
$$
X'_\beta = \ell(T_\beta) \quad
\bigg(= \sum_{^{\;\;\;\gamma<\sigma,}_{\lev(\gamma)<\beta}} A_\gamma\bigg)
$$
yields a $(\Sum\clS)$-filtration $(X'_\beta \mid \beta\le\kappa)$ of $X$, as desired. The only non-trivial part is proving that for any fixed $\beta<\kappa$ we have $X'_{\beta+1}/X'_\beta \in \Sum\clS$. We will show more, namely that
$$
X'_{\beta+1}/X'_\beta \cong
\bigoplus_{^{\;\;\;\gamma<\sigma,}_{\lev(\gamma)=\beta}} X_{\gamma+1}/X_\gamma.
$$
Since for each $\delta<\sigma$ we have $X_{\delta+1}/X_\delta \cong A_\delta / (A_\delta \cap X_\delta)$, it is easy to see that it suffices to show for each $\delta<\sigma$ with $\lev(\delta) = \beta$ that
$$
A_\delta \cap X'_\beta = A_\delta \cap X_\delta
\quad \textrm{and} \quad
(A_\delta + X'_\beta) \cap \sum_{^{\;\;\;\gamma<\delta,}_{\lev(\gamma)=\beta}} A_\gamma
\subseteq X'_\beta.
$$ 
For the first part, note that we have $A_\delta \cap X_\delta \subseteq \ell(S_\delta) \cap X_\delta = \ell(S_\delta \cap \delta) \subseteq X'_\beta$ and on the other hand, using Theorem~\ref{thm:generalized hill lemma}(H1) and~(H2) and the fact that $\delta \not\in T_\beta$, also:
$$
A_\delta \cap X'_\beta \subseteq \ell(\delta+1) \cap \ell(T_\beta) =
\ell\big((\delta+1) \cap T_\beta\big) \subseteq \ell(\delta) = X_\delta.
$$
Hence $A_\delta \cap X'_\beta = A_\delta \cap X_\delta$. The second part is similar. Namely, given any $\gamma < \sigma$ of level $\beta$, the only element in $S_\gamma$ whose level is at least $\beta$ is $\gamma$ itself. It follows that $S_\delta \cap S_\gamma \subseteq T_\beta$ for any such $\gamma<\delta$, and
\begin{multline*}
(A_\delta + X'_\beta) \cap \sum_{^{\;\;\;\gamma<\delta,}_{\lev(\gamma)=\beta}} A_\gamma \subseteq
\ell(S_\delta \cup T_\beta) \cap \sum_{^{\;\;\;\gamma<\delta,}_{\lev(\gamma)=\beta}} \ell(S_\gamma) = \\
= \ell\Big((S_\delta \cup T_\beta) \cap \bigcup_{^{\;\;\;\gamma<\delta,}_{\lev(\gamma)=\beta}} S_\gamma\Big) \subseteq
\ell(T_\beta) = X'_\beta.
\end{multline*}
This finishes the proof of the claim and also of the theorem.
\end{proof}

\section{Deconstructibility for complexes}
\label{sec:cpxs}

In this section, we discuss a crucial point of papers~\cite{G3,G2} by Gillespie on construction of certain nice model structures and also papers~\cite{Nee3,Nee} by Neeman which are focused on existence of certain triangulated adjoint functors. Namely, given a Grothendieck category $\G$ and a deconstructible class $\F \subseteq \G$, one requires that certain classes in $\Cpx\G$ determined by $\F$ also be deconstructible.
There are various candidates for classes determined by $\F$. In the simplest case, we may take $\Cpx\F$, the full subcategory of $\Cpx\G$ formed by the complexes with components in $\F$. In the above mentioned papers, other classes played an important role, too:

\begin{nota} \label{not:F tilde and dg-F}
Let $\F$ be a class of objects in a Grothendieck category $\G$, and let $\C = \{C \in \G \mid \ExtG(F,C) = 0 \textrm{ for each } F \in \F\}$. Then we denote:
\begin{enumerate}
\item by $\tilde\F$ the class of all acyclic complexes $X \in \Cpx\G$ \st $Z^n(X) \in \F$ for each integer $n$;

\item by $\dg\F$ the class of all complexes $X \in \Cpx\F$ \st every chain complex morphism $X \to Y$ with $Y \in \tilde\C$ is null-homotopic. Here, $\tilde\C$ follows the notation from (1) for $\C$ in place of $\F$.
\end{enumerate}
\end{nota}

Although the definition of $\dg\F$ may seem mysterious at the first sight, it only generalizes Spaltenstein's concept of K-projective complexes~\cite{Spa}, which is reconstructed by taking $\G = \Mod{R}$ and for $\F$ the class of all projective modules.
A slight modification of~\cite[Lemma 3.9]{G3} shows that for an extension closed class $\mathcal{F}$ we always have the inclusions
$$ \tilde\F \subseteq \dg\F \subseteq \Cpx\F. $$

Our main result in this direction then says:

\begin{thm} \label{thm:deconstr_cpxs}
Let $\G$ be a Grothendieck category and $\F$ be a deconstructible class of objects of $\G$. Then the following assertions hold:
\begin{enumerate}
\item $\Cpx\F$ is deconstructible in $\Cpx\G$. More precisely, there is a set $\Q$ of bounded below complexes from $\Cpx\F$ \st $\Cpx\F = \Filt\Q$.

\item $\tilde\F$ is deconstructible in $\Cpx\G$. More precisely, there is a set $\U$ of bounded above complexes from $\tilde\F$ \st $\tilde\F = \Filt\U$.

\item If $\F$ is a generating class in $\G$, then $\dg\F$ is deconstructible in $\Cpx\G$. In this case, each $X \in \dg\F$ is a summand of a complex filtered by the stalk complexes of the form $F[n]$ with $F \in \F$ and $n \in \mathbb{Z}$.
\end{enumerate}
\end{thm}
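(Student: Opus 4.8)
The plan is to prove all three parts by the same philosophy that underlies the Generalized Hill Lemma and Proposition~\ref{prop:add_deconstr}: fix a large enough regular cardinal $\kappa$ (so that $\G$ is locally $<\kappa$-presentable, the set $\clS$ with $\F=\Filt\clS$ consists of $<\kappa$-presentable objects, and $\F$ is $\kappa$-Kaplansky by Corollary~\ref{cor:dec_kapl}), and then build the required filtration of a given complex by a transfinite ``closing up'' procedure whose successor steps enlarge the current subcomplex by only a $<\kappa$-presentable amount. Throughout, for each degree $n$ I would fix once and for all a Hill datum $\ell_n\colon\clL_n\to\Subobj(\cdot)$ coming from an $\clS$-filtration of the relevant object in degree $n$, and insist that every subobject produced in degree $n$ lie in the image of $\ell_n$. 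This is the device that renders limit steps harmless: by (H2) a union of images of $\ell_n$ is again such an image, and by (H3) every subfactor $\ell_n(T)/\ell_n(S)$ is $\clS$-filtered, hence in $\F$, so the invariants remain in $\F$ even though $\F$ itself need not be closed under direct limits.

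For (1) I would take $\ell_n$ from an $\clS$-filtration of $X^n$, where $X\in\Cpx\F$. Starting from $Y_0=0$, at a successor step I choose a $<\kappa$-generated subcomplex $W\not\subseteq Y_\alpha$; since the subcomplex generated by a single small subobject of some $X^{n_0}$ is supported only in degrees $n_0,n_0+1$, I may take $W$ bounded below. I then close up: using (H4) I enlarge each index set $S^n$ to absorb $W^n$ and the image $d^n\bigl(\ell_n(S^n)\bigr)$ of the previous round, iterating over $\omega$ rounds and taking unions (here $\kappa$ uncountable keeps the enlargement of size $<\kappa$). Because the differential raises degrees and $d^2=0$, this closure never reaches below the bottom of $W$, so the factor $Y_{\alpha+1}/Y_\alpha$ is bounded below; by (H3) its components are $\clS$-filtered and $<\kappa$-presentable, whence $Y_{\alpha+1}/Y_\alpha$ is a $<\kappa$-presentable bounded-below complex in $\Cpx\F$. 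Letting $\Q$ be a representative set of such complexes gives $\Cpx\F\subseteq\Filt\Q$, while the reverse inclusion is immediate because $\F$ is closed under transfinite extensions (Lemma~\ref{lem:deconstr_filt}), applied degreewise.

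For (2) the same scheme runs with $\ell_n$ attached instead to the cycle objects $Z^n(X)\in\F$, and with each $Y_\alpha$ kept acyclic. Now I must close up in two directions: upward under the differential to stay a subcomplex, and --- this is the new point --- downward under taking $d$-preimages, so that every cycle of $Y_\alpha$ is already a boundary in $Y_\alpha$ and $Y_\alpha$ stays acyclic, while the cycles $Z^n(Y_\alpha)=\ell_n(S^n_\alpha)$ and the cofactors $Z^n(X)/Z^n(Y_\alpha)$ lie in $\F$ by (H3). Since cycles and homology commute with directed unions, both acyclicity and the membership of the cofactor cycles in $\F$ persist at limits. Starting from a bounded-above $W$ and noting that preimage-closure cannot raise degrees, the factors $Y_{\alpha+1}/Y_\alpha$ come out bounded above and in $\tilde\F$, and I take $\U$ to be a representative set of these. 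I expect the bookkeeping that simultaneously maintains acyclicity and the two $\F$-conditions on cycles during the $\omega$-round closure to be the main technical obstacle of this part.

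For (3) I would change the exact structure and argue cotorsion-theoretically. By Lemma~\ref{lem:stable_hom}, $X\in\dg\F$ precisely when $X\in\Cpx\F$ and $\Ext^1_\CGcs(X,Y)=0$ for every $Y\in\tilde\C$, so $\dg\F$ is a left $\Ext^1_\CGcs$-orthogonal class in the componentwise split exact structure on $\Cpx\G$. Let $\clS_0$ be a generating set of $\G$ with $\F=\Filt{\clS_0}$ (available since $\F$ is a generating class) and put $\clS'=\{F[n]\mid F\in\clS_0,\ n\in\Z\}$, a generating set for $(\Cpx\G,\text{c.s.})$. A direct computation through Lemma~\ref{lem:stable_hom} should show that $\Ext^1_\CGcs(F[n],Y)=0$ for all $F\in\F$ and all $n$ \iff $Y$ is acyclic with all cycles in $\C$, that is, the right $\Ext^1_\CGcs$-orthogonal of $\clS'$ is exactly $\tilde\C$ (the generating hypothesis is what forces acyclicity of such $Y$). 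The exact-category form of Proposition~\ref{prop:cotorsion_and_filt}, namely \cite[Corollary 2.14]{SaSt}, then identifies $\dg\F$ as the class of direct summands of objects filtered by $\clS'$, which is the asserted description by stalk complexes; deconstructibility follows from Proposition~\ref{prop:add_deconstr}(1). The delicate points are checking that $\clS'$ generates the exact category and that the orthogonal is exactly $\tilde\C$; I regard the passage from ``summand of a componentwise split $\clS'$-filtration'' to honest deconstructibility in the Grothendieck category $\Cpx\G$ as the subtlest step, to be controlled via the inclusion $\Ext^1_\CGcs\subseteq\Ext^1_{\Cpx\G}$.
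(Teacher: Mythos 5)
Your strategy for parts (1) and (2) is essentially the paper's (Propositions~\ref{prop:deconstr_C(F)} and~\ref{prop:deconstr_tildeF}): degreewise Hill data, subobjects confined to the images of the $\ell_n$, successor steps adding a $<\kappa$-presentable amount, and bounded-below (resp.\ bounded-above) factors because the closure propagates only upward (resp.\ downward) in degree. The step you flag but do not supply in (2) is in fact the crux there: after adjoining a $<\kappa$-generated $W\subseteq X^m$ with $d^m(W)$ covering the new cycles one degree up, you must show that the new cycles $(X^m_\alpha+W)\cap Z^m(X)$ are $<\kappa$-generated over $Z^m(X_\alpha)$ before (H4) can be invoked in degree $m$; this follows from Lemma~\ref{lem:size of kernels} applied to $0\to K/Z^m(X_\alpha)\to (X^m_\alpha+W)/X^m_\alpha\to Z^{m+1}_{\alpha+1}/Z^{m+1}(X_\alpha)\to 0$, and it forces you to carry $<\kappa$-\emph{presentability} (not mere generation) of the cycle quotients as an induction invariant. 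With that inserted, (1) and (2) go through (the $\omega$-iteration and the uncountability of $\kappa$ in your (1) are unnecessary: a single upward pass through the degrees suffices, since $d(X^{m-1}_\alpha)\subseteq X^m_\alpha$ already).

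Part (3) has a genuine gap: the claim that the right $\Ext^1_\CGcs$-orthogonal of the stalk complexes $S[n]$ equals $\tilde\C$ is false in general. Vanishing of $\Ext^1_\CGcs(S[n],Y)\cong\stHom(S[n-1],Y)$ for all $S\in\clS_0$ and all $n$ says only that $Y$ is acyclic and that every morphism $S\to Z^k(Y)$ lifts along $Y^{k-1}\twoheadrightarrow Z^k(Y)$; since $\Hom_\G(S,Z^k(Y))$ may be too small to detect nonzero elements of $\Ext^1_\G(S,Z^{k-1}(Y))$, this does not force $Z^{k-1}(Y)\in\C$. Concretely, for $\G=\Mod{\Z}$ and $\clS_0=\{\Z,\Z/2\}$, the acyclic complex with all components $\Z^2$ and differential $\smallpmatrix{0&1\\0&0}$ has every cycle object isomorphic to $\Z\notin\C$, yet it is right $\Ext^1_\CGcs$-orthogonal to all stalks. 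Consequently the inclusion $\dg{\F'}\subseteq{}^{\perp}\bigl(\clS'^{\perp}\bigr)$ (orthogonals taken in the componentwise split structure) is unjustified, and the cotorsion-pair machinery does not deliver that every object of $\dg{\F'}$ is a summand of a filtered object. The paper's Proposition~\ref{prop:deconstr_dgF} avoids this by computing both orthogonals with respect to $\Ext^1_{\Cpx\G}$: the extensions that detect $Z^k(Y)\notin\C$ (pushouts of non-split extensions in $\G$, mapping cones of non-nullhomotopic maps) are precisely \emph{not} componentwise split, while Lemma~\ref{lem:stable_hom} remains usable because extensions between complexes with components in $\F$ and complexes in $\tilde\C$ are automatically componentwise split. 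Two smaller omissions: the left orthogonal class is $\dg{\F'}$ (components in the summand closure of $\F$), not $\dg\F$, so you still need $\dg\F=\Cpx\F\cap\dg{\F'}$ together with part (1) and Proposition~\ref{prop:add_deconstr}(2) to conclude deconstructibility of $\dg\F$ itself; and Proposition~\ref{prop:cotorsion_and_filt} is applied in the Grothendieck category $\Cpx\G$, where $\Filt\Q$ is generating because it contains the contractible complexes $S\overset{1}\to S$.
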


We will prove the theorem in a few steps, giving a corresponding argument for each of the parts (1), (2) and (3) separately. We start with Theorem~\ref{thm:deconstr_cpxs}(1).

\begin{prop} \label{prop:deconstr_C(F)}
Let $\kappa$ be an infinite regular cardinal and $\G$ a locally $<\kappa$-presentable Grothendieck category. Suppose that $\F \subseteq \G$ is a deconstructible class \st $\F = \Filt\clS$, where $\clS$ is a representative set of all $<\kappa$-presentable objects of $\G$ contained in $\F$. Then each $X \in \Cpx\F$ is filtered by bounded below complexes with components in $\clS$. In particular, $\Cpx\F$ is deconstructible.
\end{prop}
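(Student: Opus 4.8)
The plan is to build, by transfinite induction, a continuous filtration $(P_\alpha \mid \alpha \le \sigma)$ of $X$ in $\Cpx\G$ whose successive factors $P_{\alpha+1}/P_\alpha$ are bounded below complexes with components in $\clS$. The engine is the Generalized Hill Lemma applied separately in each degree. Concretely, for every $n \in \Z$ the component $X^n$ lies in $\F = \Filt\clS$, so I fix an $\clS$-filtration of $X^n$ and the associated complete lattice homomorphism $\ell_n \colon \clL_n \to \Subobj(X^n)$ from Theorem~\ref{thm:generalized hill lemma} (legitimate since $\G$ is locally $<\kappa$-presentable and $\clS$ consists of $<\kappa$-presentable objects), writing $\clH_n = \{\ell_n(S) \mid S \in \clL_n\}$. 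By (H2) each $\clH_n$ is closed under arbitrary sums and intersections; by (H3) every quotient $\ell_n(T)/\ell_n(S)$ with $S \subseteq T$ is $\clS$-filtered; and by (H4) every $<\kappa$-generated subobject of $X^n$ sits inside some $<\kappa$-presentable member of $\clH_n$. Throughout, each $P_\alpha$ will be a subcomplex with $P_\alpha^n \in \clH_n$ for all $n$.

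The key structural observation is that the differential of $X$ raises degree, so closing a small subobject at degree $n_0$ under the differentials only enlarges degrees $\ge n_0$ and leaves lower degrees untouched; this is exactly what makes the factors bounded below and avoids any back-and-forth. Thus, at a successor step with $P_\alpha \subsetneq X$, I choose a degree $n_0$ and a $<\kappa$-generated subobject $W \subseteq X^{n_0}$ with $W \not\subseteq P_\alpha^{n_0}$, and set $P_{\alpha+1}^n = P_\alpha^n$ for $n < n_0$. For $n \ge n_0$ I build $P_{\alpha+1}^n$ by an upward sweep: using (H4) and (H2), I first enlarge $P_\alpha^{n_0}$ within $\clH_{n_0}$ to absorb $W$ with a $<\kappa$-presentable quotient; then, since the image $d^{n}(P_{\alpha+1}^n)$ of a $<\kappa$-presentable object is $<\kappa$-generated, I successively enlarge $P_\alpha^{n+1}$ within $\clH_{n+1}$ by a $<\kappa$-presentable member of $\clH_{n+1}$ containing it, again with $<\kappa$-presentable quotient. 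By construction $d^n(P_{\alpha+1}^n) \subseteq P_{\alpha+1}^{n+1}$ for every $n$, so $P_{\alpha+1}$ is a subcomplex, and writing each new component as $\ell_n(T)$ over $P_\alpha^n = \ell_n(S)$ with $S \subseteq T$ and $\card{T \setminus S} < \kappa$, Theorem~\ref{thm:generalized hill lemma}(H3) together with Corollary~\ref{cor:small objects closed under small filtrations} shows each $P_{\alpha+1}^n/P_\alpha^n$ is $<\kappa$-presentable and $\clS$-filtered. At limit stages I take componentwise unions, which stay in $\clH_n$ by (H2) and remain subcomplexes since differentials commute with directed unions. A routine bookkeeping argument---well-ordering a $<\kappa$-generated generating family of each $X^n$ and forcing the next not-yet-absorbed generator in at each step---guarantees $\bigcup_\alpha P_\alpha = X$.

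The decisive point is the choice of $\clS$: since $\clS$ is a representative set of \emph{all} $<\kappa$-presentable objects of $\G$ lying in $\F$, each factor component $P_{\alpha+1}^n/P_\alpha^n$, being simultaneously $<\kappa$-presentable and $\clS$-filtered (hence in $\F$), is already isomorphic to a member of $\clS$. Therefore each factor $P_{\alpha+1}/P_\alpha$ is a bounded below complex with components in $\clS$, with no further refinement needed. This proves the first assertion.

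For deconstructibility, let $\Q$ be a representative set of all bounded below complexes with components in $\clS$; this is a set because such a complex is determined up to isomorphism by an eventually-zero degreewise sequence of objects of $\clS$ and a choice of differentials between them. The above shows $\Cpx\F \subseteq \Filt\Q$. Conversely, since subobjects, quotients and colimits in $\Cpx\G$ are computed degreewise, any $\Q$-filtration restricts in each degree to an $\clS$-filtration, whence $\Filt\Q \subseteq \Cpx\F$; thus $\Cpx\F = \Filt\Q$ is deconstructible. The main obstacle is simply organizing the degreewise Hill data into one subcomplex filtration while keeping the factors both bounded below and degreewise small---resolved by the one-directional upward closure described above.
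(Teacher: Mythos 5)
Your proposal is correct and follows essentially the same route as the paper: per-degree Hill lattices $\ell_n\colon\clL_n\to\Subobj(X^n)$, a transfinite filtration by subcomplexes with components in $\clH_n$, successor steps that fix degrees below a chosen $n_0$ and sweep upward along the differential, and the observation that each factor component, being $<\kappa$-presentable and $\clS$-filtered, already lies in $\clS$ up to isomorphism. One wording slip: $P_{\alpha+1}^n$ itself need not be $<\kappa$-presentable, so in the upward sweep you should take the image $d^n(W')$ of a $<\kappa$-generated $W'$ with $P_{\alpha+1}^n = P_\alpha^n + W'$ (as the paper does), rather than the image of all of $P_{\alpha+1}^n$; the rest of your argument already supplies such a $W'$.
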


\begin{proof}
Let $\kappa$, $\G$, $\F$ and $\clS$ be as above, and denote by $\Q$ the class of all bounded below complexes over $\G$ with all components in $\clS$. Given $X \in \Cpx\F$, we fix for each component $X^n$ an $\clS$-filtration and for this filtration a complete lattice morphism $\ell_n: \clL_n \to \Subobj(X^n)$ provided by Theorem~\ref{thm:generalized hill lemma}. For each integer $n$, we further put $\clH_n = \{ \ell_n(S) \mid S \in \clL_n \}$.

Using this data, we will construct by induction a $\Q$-filtration $(X_\alpha \mid \alpha\le\sigma)$ of the complex $X$ \st $X^n_\alpha \in \clH_n$ for each $n \in \mathbb{Z}$ and $\alpha\le\sigma$.
We are required to put $X_0 = 0$ and take direct unions of subcomplexes at limit steps. For a non-limit step, assume we have constructed $X_\alpha \subsetneqq X$ and we take an integer $n$ and a $<\kappa$-generated subobject $W \subseteq X^n$ \st $W \not\subseteq X^n_\alpha$. Then we put $X^m_{\alpha+1} = X^m_\alpha$ for $m<n$ and take $X^n_{\alpha+1} \in \clH_n$ \st $X^n_\alpha + W \subseteq X^n_{\alpha+1}$ and $X^n_{\alpha+1}/X^n_\alpha$ is $<\kappa$-presentable and $\clS$-filtered. Note that we can always do this using Theorem~\ref{thm:generalized hill lemma}(H4) and~(H3) and Corollary~\ref{cor:small objects closed under small filtrations}. Further note that, up to isomorphism, $X^n_{\alpha+1}/X^n_\alpha \in \clS$ by Lemma~\ref{lem:deconstr_filt}.

For $m > n$ we proceed by induction. Suppose we have already constructed $X^{m-1}_{\alpha+1}$ \st $X^{m-1}_{\alpha+1}/X^{m-1}_\alpha \in \clS$, up to isomorphism. Then there is a $<\kappa$-generated subobject $W' \subseteq X^{m-1}_{\alpha+1}$ \st $X^{m-1}_{\alpha+1} = X^{m-1}_\alpha+W'$. Since $d^{m-1}_X(W')$ is a $<\kappa$-generated subobject of $X^m$, we can again use Theorem~\ref{thm:generalized hill lemma} to find $X^m_{\alpha+1} \in \clH_m$ \st $X^m_\alpha + d^{m-1}_X(W') \subseteq X^m_{\alpha+1}$ and $X^m_{\alpha+1} / X^m_\alpha$ is isomorphic to an object from $\clS$. This finishes the induction.
It is then easy to check that $X_{\alpha+1}$ is a subcomplex of $X$ and $X_{\alpha+1}/X_\alpha$ is isomorphic to an element of $\Q$.
\end{proof}

The case of $\tilde\F$ (see Notation~\ref{not:F tilde and dg-F}) is similar, but technically more involved.

\begin{prop} \label{prop:deconstr_tildeF}
Let $\kappa$ be an infinite regular cardinal and $\G$ a locally $<\kappa$-presentable Grothendieck category. Suppose that $\F \subseteq \G$ is a deconstructible class \st $\F = \Filt\clS$ for a set $\clS$ of $<\kappa$-presentable objects. Then each $X \in \tilde\F$ is filtered by bounded above complexes from $\tilde\F$ with $<\kappa$-presentable components. In particular, $\tilde\F$ is deconstructible.
\end{prop}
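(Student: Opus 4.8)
The plan is to imitate the proof of Proposition~\ref{prop:deconstr_C(F)}, but to control the \emph{cycle} objects rather than the components and to close filtrations \emph{downwards} rather than upwards, so that acyclicity can be maintained. First I would record the structure of an object $X\in\tilde\F$: it is acyclic, so $Z^n:=Z^n(X)\in\F$, and from the short exact sequences $0\to Z^n\to X^n\to Z^{n+1}\to 0$ together with Lemma~\ref{lem:deconstr_filt} each component $X^n$ lies in $\F$ as well. I would fix an $\clS$-filtration of each $Z^n$ and apply the Generalized Hill Lemma (Theorem~\ref{thm:generalized hill lemma}) to obtain complete sublattices $\clL_n$ and homomorphisms $\ell_n\colon\clL_n\to\Subobj(Z^n)$, writing $\clH_n=\{\ell_n(S)\mid S\in\clL_n\}$; by (H3) every member of $\clH_n$ is $\clS$-filtered, hence in $\F$.

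The key reduction is the following. If $Y\subseteq X$ is an \emph{acyclic} subcomplex with $Z^n(Y)\in\clH_n$ for every $n$, then $Y\in\tilde\F$, and $X/Y$ is again acyclic by the homology long exact sequence (since $X$ is). Moreover, for acyclic $Y\subseteq Y'$ a diagram chase (using acyclicity of $Y$) gives a short exact sequence $0\to Z^n(Y)\to Z^n(Y')\to Z^n(Y'/Y)\to 0$, so $Z^n(Y'/Y)\cong\ell_n(S')/\ell_n(S)$ is $\clS$-filtered of ``length'' $\card{S'\setminus S}$ by (H3); thus $Y'/Y\in\tilde\F$ with $<\kappa$-presentable cycles (and hence $<\kappa$-presentable components, by Corollary~\ref{cor:small objects closed under small filtrations}) as soon as $\card{S'\setminus S}<\kappa$. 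It therefore suffices to build a continuous chain $0=X_0\subseteq X_1\subseteq\dots\subseteq X_\sigma=X$ of acyclic subcomplexes with $Z^n(X_\beta)\in\clH_n$ for all $n,\beta$, whose successive factors are bounded above, have $<\kappa$-presentable components and lie in $\tilde\F$. At limit ordinals I would take direct unions: acyclicity is preserved because homology commutes with filtered colimits, and $Z^n(\bigcup X_\beta)=\bigcup Z^n(X_\beta)\in\clH_n$ because kernels commute with direct unions and $\clH_n$ is closed under directed unions by (H2).

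The heart of the argument is the successor step, where I pass from an acyclic $X_\beta\subsetneq X$ to $X_{\beta+1}$ by a descending recursion touching each degree exactly once. A short computation with the acyclic quotient $X/X_\beta$ shows that, since $X_\beta\neq X$, there is an integer $n$ with $X_\beta^n\subsetneq(d^n)^{-1}(X_\beta^{n+1})$; I pick a $<\kappa$-generated $W\subseteq(d^n)^{-1}(X_\beta^{n+1})$ with $W\not\subseteq X_\beta^n$, so that $d^n(W)\subseteq X_\beta^{n+1}$ and no degree above $n$ need ever be altered (this is what forces the factor to be bounded above). Setting $X_{\beta+1}^m=X_\beta^m$ for $m>n$, I define $X_{\beta+1}^m$ for $m=n,n-1,n-2,\dots$ recursively: at stage $m$ I choose, via (H4), a set $C^m=\ell_m(S^{\beta+1}_m)\in\clH_m$ with $S^\beta_m\subseteq S^{\beta+1}_m$, $\card{S^{\beta+1}_m\setminus S^\beta_m}<\kappa$, containing the $<\kappa$-generated object $(X_\beta^m+W_m)\cap Z^m$, and put
$$ X_{\beta+1}^m = X_\beta^m + W_m + C^m, $$
where $W_n=W$ and, for $m<n$, $W_m\subseteq X^m$ is a $<\kappa$-generated preimage of $C^{m+1}$ under $d^m$ (which exists because $X$ is acyclic, so $C^{m+1}\subseteq B^{m+1}(X)=d^m(X^m)$, with $C^{m+1}$ $<\kappa$-generated; cf.\ Lemma~\ref{lem:size of kernels}). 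The modular law, in the form $(U+C^m)\cap Z^m=(U\cap Z^m)+C^m$ for $C^m\subseteq Z^m$, then yields $X_{\beta+1}^m\cap Z^m=C^m$, i.e.\ $Z^m(X_{\beta+1})=C^m\in\clH_m$ exactly; and the choices $d^n(W)\subseteq C^{n+1}$, $d^m(W_m)=C^{m+1}$ guarantee that $X_{\beta+1}$ is a subcomplex with $d^{m-1}(X_{\beta+1}^{m-1})=C^m=Z^m(X_{\beta+1})$, i.e.\ $X_{\beta+1}$ is acyclic. As each degree is visited once, each factor component $X_{\beta+1}^m/X_\beta^m$ is $<\kappa$-presentable, and the factor is bounded above and lies in $\tilde\F$ by the reduction above.

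The main obstacle is precisely this simultaneous bookkeeping in the successor step: one must keep the cycle object $Z^m(X_{\beta+1})$ \emph{exactly} equal to a prescribed member $C^m$ of $\clH_m$ (not merely containing it), while enforcing the boundary identity $d^{m-1}(X_{\beta+1}^{m-1})=C^m$ required for acyclicity and retaining $<\kappa$-presentability and boundedness above, and all of this along an infinite descent, since the factors are genuinely unbounded below. The modular-lattice identity together with the compatibility of the preimage choices is what makes this go through. Once the chain is built, taking $\U$ to be a representative set of the bounded-above complexes in $\tilde\F$ with $<\kappa$-presentable components gives $\tilde\F\subseteq\Filt\U$; the reverse inclusion $\Filt\U\subseteq\tilde\F$ holds because acyclicity and membership of the cycles in $\F$ are preserved under transfinite extensions (as in Lemma~\ref{lem:deconstr_filt}), whence $\tilde\F=\Filt\U$ is deconstructible.
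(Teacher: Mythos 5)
Your strategy coincides with the paper's: apply the Generalized Hill Lemma to chosen $\clS$-filtrations of the cycle objects $Z^n(X)$, build a continuous chain of acyclic subcomplexes of $X$ whose cycles lie in the prescribed families $\clH_n$, and at each successor step descend through the degrees from a top degree $n$ downwards, so that the factor is bounded above, acyclic, and has $<\kappa$-presentable components; the limit steps and the final identification of the factors with elements of $\U$ are handled exactly as in the paper.

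There is, however, a genuine gap in the successor step, at the very point you single out as the heart of the argument, and it stems from conflating absolute with relative $<\kappa$-generation. The object $C^{m+1}=\ell_{m+1}(S^{\beta+1}_{m+1})$ contains $Z^{m+1}(X_\beta)$ and is therefore in general \emph{not} $<\kappa$-generated, so a $<\kappa$-generated $W_m$ with $d^m(W_m)=C^{m+1}$ simply does not exist; likewise $(X_\beta^m+W_m)\cap Z^m(X)$ is not $<\kappa$-generated, so (H4) cannot be applied to it as you propose. What is true, and what suffices, are the relative statements: since $C^{m+1}/Z^{m+1}(X_\beta)$ is $<\kappa$-presentable and $d^m\colon X^m\to Z^{m+1}(X)$ is an epimorphism (acyclicity of $X$), one can choose $W_m$ $<\kappa$-generated with $d^m(W_m)+Z^{m+1}(X_\beta)=C^{m+1}$, which still yields $d^m(X^m_{\beta+1})=C^{m+1}$ because $d^m(X_\beta^m)=Z^{m+1}(X_\beta)$ already. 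More seriously, before (H4) can be invoked in degree $m$ one must \emph{prove} that $K=(X_\beta^m+W_m)\cap Z^m(X)$ has the form $Z^m(X_\beta)+W'$ with $W'$ $<\kappa$-generated; this is not automatic, and the paper obtains it from a $3\times3$-lemma diagram chase producing the exact sequence $0\to K/Z^m(X_\beta)\to(X_\beta^m+W_m)/X_\beta^m\to Z^{m+1}_{\beta+1}/Z^{m+1}(X_\beta)\to 0$, followed by Lemma~\ref{lem:size of kernels} — which is exactly why one must track that the cycle factors are $<\kappa$-\emph{presentable} rather than merely $<\kappa$-generated. Once these relative versions are substituted, your modular-law computation of $Z^m(X_{\beta+1})$ and the rest of the argument go through and agree with the paper's proof.
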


\begin{proof}
Let $\U$ be a representative set for all bounded above complexes in $\tilde\F$ with $<\kappa$-presentable components. We must prove that $\tilde\F = \Filt\U$. In fact, it is easy to see using Lemma~\ref{lem:deconstr_filt} that $\Filt\U \subseteq \tilde\F$, so we are left with proving that any $X \in \tilde\F$ is $\U$-filtered.

Given such $X$, we fix for each cycle object $Z^n(X)$ an $\clS$-filtration and for this filtration a complete lattice morphisms $\ell_n: \clL_n \to \Subobj\big(Z^n(X)\big)$ provided by Theorem~\ref{thm:generalized hill lemma}. For each integer $n$, we further put $\clH_n = \{ \ell_n(S) \mid S \in \clL_n \}$. Now we will inductively construct a filtration $(X_\alpha \mid \alpha\le\sigma)$ of $X$ \st for each $\alpha<\sigma$:
\begin{enumerate}
 \item $X_\alpha$ is an acyclic complex,
 \item $X_{\alpha+1}/X_\alpha$ is bounded above,
 \item $Z^n(X_\alpha) \in \clH_n$ for each $n \in \mathbb{Z}$, and
 \item $Z^n(X_{\alpha+1})/Z^n(X_\alpha)$ is $<\kappa$-presentable for each $n \in \mathbb{Z}$.
\end{enumerate}

Assume for the moment we have such a filtration. Then $X_{\alpha+1}/X_\alpha$ is an acyclic complex for each $\alpha$ by the $3\times 3$ lemma. Since each $Z^n(X_{\alpha+1})/Z^n(X_\alpha)$ is $<\kappa$-presentable, Lemma~\ref{lem:small objects closed under ext} tells us that each component of $X_{\alpha+1}/X_\alpha$ is $<\kappa$-presentable. Using (2), (3) and Theorem~\ref{thm:generalized hill lemma}(H3), it immediately follows that $X_{\alpha+1}/X_\alpha \in \U$, up to isomorphism. Hence $(X_\alpha \mid \alpha\le\sigma)$ is a $\U$-filtration and we are done.

To construct the filtration, we put $X_0 = 0$ and $X_\alpha = \bigcup_{\gamma<\alpha} X_\gamma$ for limit ordinals $\alpha\le\sigma$. Note that $X_\alpha$ is acyclic in the latter case because direct limits are exact in $\G$. For non-limit steps, assume we have constructed $X_\alpha \subsetneqq X$ for some $\alpha$ and we wish to construct $X_{\alpha+1}$. It is easy to see that there is $n \in \mathbb{Z}$ \st $Z^n(X_\alpha) \subsetneqq Z^n(X)$. We put $X_{\alpha+1}^m = X_\alpha^m$ for all $m>n$ and construct $X_{\alpha+1}^m$ inductively for $m \le n$. To this end, suppose that $X_{\alpha+1}^{m+1}$ has been already constructed and denote $Z_{\alpha+1}^{m+1} = X_{\alpha+1}^{m+1} \cap Z^{m+1}(X)$. Since $Z_{\alpha+1}^{m+1} / Z^{m+1}(X_\alpha)$ is $<\kappa$-presentable (see requirement (4) above), there is a $<\kappa$-generated subobject $W \subseteq X^m$ \st $Z_{\alpha+1}^{m+1} = Z^{m+1}(X_\alpha) + d_X^m(W)$. If $m=n$, we in addition take $W$ so that $W \not\subseteq Z^n(X_\alpha)$, which will ensure further in the construction that $X^n_\alpha \subsetneqq X^n_{\alpha+1}$. Denoting $K = (X_\alpha^{m} + W) \cap Z^m(X)$, we get the following diagram with exact rows:
$$
\begin{CD}
0 @>>> Z^m(X_\alpha) @>{\subseteq}>> X^m_\alpha   @>{{d_X^m\restriction X^m_\alpha}}>>     Z^{m+1}(X_\alpha)   @>>> 0    \\
@.   @V{\subseteq}VV               @VV{\subseteq}V                                          @VV{\subseteq}V              \\
0 @>>>       K       @>{\subseteq}>> X^m_\alpha+W @>{{d_X^m\restriction (X^m_\alpha+W)}}>> Z_{\alpha+1}^{m+1}  @>>> 0
\end{CD}
$$
Using the $3 \times 3$ lemma, we get an exact sequence
$$
0 \la K/Z^m(X_\alpha) \la (X^m_\alpha+W)/X^m_\alpha \la Z^{m+1}_{\alpha+1}/Z^{m+1}(X_\alpha) \la 0. $$
Since $Z^{m+1}_{\alpha+1}/Z^{m+1}(X_\alpha)$ is $<\kappa$-presentable and $(X^m_\alpha+W)/X^m_\alpha \cong W/(X^m_\alpha \cap W)$ is $<\kappa$-generated, the factor $K/Z^m(X_\alpha)$ is $<\kappa$-generated by Lemma~\ref{lem:size of kernels}. Hence there is $W' \subseteq Z^m(X_\alpha)$ which is $<\kappa$-generated and such that $K = Z^m(X_\alpha) + W'$. By Theorem~\ref{thm:generalized hill lemma}(H4), there is a set $S \in \clL_m$ of cardinality $<\kappa$ \st $W' \subseteq \ell_m(S)$. Now we set $Z^m_{\alpha+1} = Z^m(X_\alpha) + \ell_m(S)$ and $X^m_{\alpha+1} = X^m_{\alpha} + W + \ell_m(S)$. Observe that using modularity of the lattice of subobjects of $X^m$, we have
$$
X^m_{\alpha+1} \cap Z^m(X) = K + \ell_m(S) = Z^m(X_\alpha) + \ell_m(S) = Z^m_{\alpha+1}.
$$
Further, using Theorem~\ref{thm:generalized hill lemma} and Corollary~\ref{cor:small objects closed under small filtrations}, one readily infers that $Z^m_{\alpha+1} = Z^m(X_\alpha) + \ell_m(S) \in \clH_m$ and $Z^m_{\alpha+1}/Z^m(X_\alpha)$ is $\clS$-filtered and $<\kappa$-presentable. Finally, since $\ell_m(S) \subseteq Z^m(X)$, we have $\Img (d^m_X \restriction X^m_{\alpha+1}) = Z^{m+1}_{\alpha+1}$. Therefore, $X^m_{\alpha+1} \subseteq X^m$ has all the required properties. This finishes the induction step for $m$ and also the construction.
\end{proof}

To finish the proof of Theorem~\ref{thm:deconstr_cpxs}, we focus on part (3). Our exposition here uses ideas of Gillespie from~\cite[\S3]{G3}, which were also nicely presented by Hovey in~\cite[\S7]{H3}.

\begin{prop} \label{prop:deconstr_dgF}
Let $\G$ be a Grothendieck category, and $\F \subseteq \G$ be a class of objects which is generating and deconstructible. Then each $X \in \dg\F$ is a summand of an object filtered by stalk complexes of the form $F[n]$ with $F \in \F$ and $n \in \mathbb{Z}$. In particular, $\dg\F$ is deconstructible.
\end{prop}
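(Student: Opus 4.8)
The plan is to reduce the statement to a combination of Proposition~\ref{prop:cotorsion_and_filt} and the already-proved Proposition~\ref{prop:deconstr_tildeF}, exploiting the homotopy-theoretic characterization of $\dg\F$ through Lemma~\ref{lem:stable_hom}. The key idea is that $\dg\F$ should be describable as the class of objects $X \in \Cpx\G$ satisfying a vanishing condition $\Ext^1_\CGcs(X, Y) = 0$ for all $Y \in \tilde\C$, where $\C = \{C \mid \ExtG(F,C) = 0 \text{ for each } F \in \F\}$. Indeed, by Lemma~\ref{lem:stable_hom}, for $Y \in \tilde\C$ the group $\Ext^1_\CGcs(X,Y)$ is the quotient of $\Hom_{\Cpx\G}(X[1], Y)$ (after reindexing) by null-homotopic maps, and the condition in Notation~\ref{not:F tilde and dg-F}(2) that every chain map into $\tilde\C$ be null-homotopic is exactly the vanishing of this $\Ext^1_\CGcs$. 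So first I would make precise that $\dg\F$, intersected with $\Cpx\F$, is the left-hand side of a cotorsion-type pair relative to the componentwise split exact structure on $\Cpx\G$.

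The second step is to set up the right generating data. Since $\F$ is generating in $\G$, the stalk complexes $F[n]$ with $F \in \F$, $n \in \Z$ generate $\Cpx\G$; let $\clS'$ be a set of such stalks (using that $\F$ itself is $\Filt\clS$ for a set $\clS$ of small objects, take $F \in \clS$). I would then compute the ``$\C$-side'' with respect to $\clS'$ in the exact category $\CGcs$: a complex $Y$ satisfies $\Ext^1_\CGcs(F[n], Y) = 0$ for all generating stalks precisely when $Y$ is acyclic with cycles in $\C$, i.e.\ $Y \in \tilde\C$. This identification again runs through Lemma~\ref{lem:stable_hom}, since a map out of a stalk $F[n]$ detects the homology and the cycle-object extension data of $Y$. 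Having matched both sides, Proposition~\ref{prop:cotorsion_and_filt}, transported to the exact category $\CGcs$ via the more general version cited there from \cite[Corollary 2.14]{SaSt}, would yield that $\dg\F$ is exactly the class of direct summands of $(\Filt{\clS'})$-objects, where $\clS'$ is the set of stalk complexes. This gives the concrete assertion that each $X \in \dg\F$ is a summand of a complex filtered by stalks $F[n]$.

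The final step is to upgrade ``closed under summands of filtered objects'' to genuine deconstructibility, i.e.\ to produce a single set $\clS''$ with $\dg\F = \Filt{\clS''}$. This is precisely the content of Proposition~\ref{prop:add_deconstr}(1): the class $\Filt{\clS'}$ of complexes filtered by stalks is deconstructible by Proposition~\ref{prop:deconstr_C(F)}-style reasoning (stalks have small components), and closing under direct summands preserves deconstructibility. I expect the main obstacle to be the careful verification, via Lemma~\ref{lem:stable_hom}, that the two vanishing conditions match up exactly on both sides---in particular checking that testing against \emph{all} of $\tilde\C$ is equivalent to testing against the cotorsion-orthogonal of the generating stalks, which requires knowing that $\tilde\C$ is itself the full right-orthogonal and not something larger. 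Establishing this orthogonality cleanly, so that Proposition~\ref{prop:cotorsion_and_filt} applies verbatim in the componentwise split exact structure, is where the real work lies; the deconstructibility and summand-closure bookkeeping afterwards is routine given the earlier results.
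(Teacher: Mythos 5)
There is a genuine gap, and it sits exactly where you suspected the real work lies: the two orthogonality classes computed in the componentwise split exact structure do \emph{not} match $\tilde\C$ and $\dg{\F}$. Concretely, your claim that a complex $Y$ satisfies $\Ext^1_\CGcs(S[n],Y)=0$ for all generating stalks precisely when $Y\in\tilde\C$ is false. By Lemma~\ref{lem:stable_hom}, $\Ext^1_\CGcs(Z,Y)$ is the group of homotopy classes of maps $Z[-1]\to Y$, so \emph{every contractible complex} lies in the $\Ext^1_\CGcs$-orthogonal of every set of complexes. Taking the disc complex $\dots\to 0\to M\overset{1_M}\to M\to 0\to\dots$ with $M\notin\C$ gives a complex in the c.s.\ right-orthogonal of the stalks whose cycle objects are not in $\C$. (In terms of the long exact sequence: vanishing of $\Ext^1_\CGcs(S[n],Y)$ only says that maps $S\to Z^k(Y)$ lift along $Y^{k-1}\twoheadrightarrow Z^k(Y)$, i.e.\ that $\ExtG(S,Z^{k-1}(Y))\to\ExtG(S,Y^{k-1})$ is injective, not that $\ExtG(S,Z^{k-1}(Y))=0$.) Dually, the c.s.\ left-orthogonal of $\tilde\C$ contains all contractible complexes with arbitrary components, so it is strictly larger than $\dg{\F}$ and does not see the condition $X\in\Cpx\F$ at all. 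Because of this mismatch it is not even clear that $\dg\F$ is contained in the left class of the cotorsion pair generated by the stalks in $\CGcs$, so the exact-category version of Proposition~\ref{prop:cotorsion_and_filt} cannot be applied as you propose.

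The paper's proof avoids this by working with the full abelian-category $\Ext^1_{\Cpx\G}$ rather than $\Ext^1_\CGcs$. A non-split extension $0\to Z^n(Y)\to E\to S\to 0$ in $\G$ pushes out to a non-split (but \emph{not} componentwise split) extension $0\to Y\to W\to S[-n]\to 0$ of complexes --- this is what forces $Z^n(Y)\in\C$; symmetrically, disc complexes $C'\overset{1}\to C'$ detect the component condition $X^n\in\F'$ (the summand closure of $\F$) on the left. Only after both component conditions are in place does one check that the relevant $\Ext^1_{\Cpx\G}$-groups coincide with the $\Ext^1_\CGcs$-groups, whereupon Lemma~\ref{lem:stable_hom} identifies the left orthogonal with $\dg{\F'}$. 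Then Proposition~\ref{prop:cotorsion_and_filt} is applied verbatim in the Grothendieck category $\Cpx\G$ (Lemma~\ref{lem:complexes over G}), no exact-category version needed, and finally $\dg\F=\Cpx\F\cap\dg{\F'}$ is deconstructible by Propositions~\ref{prop:deconstr_C(F)} and~\ref{prop:add_deconstr} --- a last reduction your sketch also skips, since the orthogonality description naturally produces $\dg{\F'}$, not $\dg\F$. If you want to salvage your plan, replace $\Ext^1_\CGcs$ by $\Ext^1_{\Cpx\G}$ throughout and prove the two identifications for that functor; the rest of your outline then goes through.
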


\begin{proof}
Let us fix a set $\clS \subseteq \F$ which is generating in $\G$ and \st $\F = \Filt\clS$, and denote
$$ \C = \{C \in \G \mid \ExtG(S,C) = 0 \textrm{ for each } S \in \clS\}. $$
Let $\Q$ be the set of all stalk complexes of the form $S[n]$ for an integer $n$ and $S \in \clS$, and denote by $\F'$ the class of all direct summands of objects from $\F$. With this notation (and using Notation~\ref{not:F tilde and dg-F}), we will prove that
\begin{enumerate}
 \item $\tilde\C = \{Y \in \Cpx\G \mid \Ext^1_{\Cpx\G}(Q,Y) = 0 \textrm{ for each } Q \in \Q\}$, and
 \item $\dg{\F'} = \{X \in \Cpx\G \mid \Ext^1_{\Cpx\G}(X,Y) = 0 \textrm{ for each } Y \in \tilde\C\}$.
\end{enumerate}
The first statement of the proposition will then become an immediate consequence of Proposition~\ref{prop:cotorsion_and_filt}. Indeed, note that $\Filt\Q$ is a generating class in $\Cpx\G$ since it contains complexes of the form
$$
\qquad \dots \la 0 \la 0 \la S \overset{1_S}\la S \la 0 \la 0 \la \dots
$$
with $S \in \clS$, and $\clS$ is assumed to be generating in $\G$. Then $\dg{\F'}$ is the closure of $\Filt\Q$ under direct summands by Lemmas~\ref{lem:complexes over G} and~\ref{lem:deconstr_filt}, and $\dg\F = \Cpx\F \cap \dg{\F'}$ directly from the definition.

Let us prove (1) and (2). For (1), assume first that $Y \in \Cpx\G$ is \st $\Ext^1_{\Cpx\G}(\Q,Y) = 0$. To begin with, we show that $Y$ is acyclic. Suppose it is not, so $H^n(Y) \ne 0$ for some $n \in \Z$. Then there is a chain complex morphism $f: S[-n] \to Y$ with $S \in \mathcal{S}$, which is not null-homotopic. Indeed, there is an epimorphism in $\G$ of the from $\bigoplus_{i \in I} S_i \to Z^n(Y)$ with $S_i \in \mathcal{S}$ for each $i \in I$. If $H^n(Y) \ne 0$, at least one of the components $S_i \to Z^n(Y)$ cannot factor through the differential map $Y^{n-1} \to Z^n(Y)$ of $Y$, and we can take $S = S_i$ and the induced chain complex morphism $S[-n] \to Y$ for $f$. Having such $f$ and using Lemma~\ref{lem:stable_hom}, the mapping cone $C_f$ of $f$ fits into a componentwise split but non-split exact sequence
$$ 0 \la Y \la C_f \la S[-n+1] \la 0, $$
which contradicts $\Ext^1_{\Cpx\G}(\Q,Y) = 0$. Hence $X$ is acyclic.

Next we show that $Z^n(Y) \in \C$ for all $n \in \Z$. If not, then we would have a non-split extension $0 \to Z^n(Y) \to E \to S \to 0$ in $\G$ for some integer $n$ and $S \in \clS$. This would induce, using the pushout diagram
$$
\begin{CD}
0 @>>> Z^n(Y) @>>> Y^n @>>> Z^{n+1}(Y) @>>> 0\phantom{,}    \\
@.     @VVV       @VVV          @|                          \\
0 @>>>   E    @>>> W^n @>>> Z^{n+1}(Y) @>>> 0,
\end{CD}
$$
a non-split extension
$$ 0 \la Y \la W \la S[-n] \la 0 $$
in $\Cpx\G$, which is a contradiction again. Hence $Z^n(Y) \in \C$ for each integer $n$ and consequently $Y \in \tilde\C$.

Assume on the other hand that $Y \in \tilde\C$. Since $\C$ is extension closed, any extension of the form
$$ 0 \la Y \la W \la S[n] \la 0 $$
with $S \in \clS$ is componentwise split. In particular, we have the equality
$$ \Ext^1_{\Cpx\G}(S[n],Y) = \Ext^1_\CGcs(S[n],Y). $$
To prove that the $\Ext$-groups above vanish, we must in view of Lemma~\ref{lem:stable_hom} show that any morphism $f\colon S[n-1] \to Y$ is null-homotopic. To see this, note that such $f$ is given by a morphism $f': S \to Z^{n-1}(Y)$ in $\G$, and $f'$ factors through the epimorphism $Y^{n-2} \to Z^{n-1}(Y)$ since $Z^{n-2}(Y) \in \C$. It follows that $\Ext^1_{\Cpx\G}(\Q,Y) = 0$ and the proof of (1) is finished.

For (2), first assume that $X$ is a complex over $\G$ \st $\Ext^1_{\Cpx\G}(X,\tilde\C) = 0$. We claim that $X$ must belong to $\Cpx{\F'}$. Equivalently by Proposition~\ref{prop:cotorsion_and_filt}, we must show that $\ExtG(X^n,\C) = 0$ for each component $X^n$ of $X$. To this end, we employ a similar argument as in \cite[Proposition 3.6]{G3}. Namely, note that if there is a non-split extension $0 \to C' \to V^n \to X^n \to 0$ for an integer $n$ and $C' \in \C$, then the pullback diagram
$$
\begin{CD}
0 @>>>  C' @>>> V^{n-1} @>>> X^{n-1} @>>> 0    \\
@.      @|       @VVV          @VVV            \\
0 @>>>  C' @>>>   V^n   @>>>   X^n   @>>> 0
\end{CD}
$$
induces a non-split extension
$$ 0 \la C \la V \la X \la 0 $$
in $\Cpx\G$, where $C \in \tilde\C$ is the correspondingly shifted complex of the form
$$
\qquad \dots \la 0 \la 0 \la C' \overset{1_{C'}}\la C' \la 0 \la 0 \la \dots.
$$
This would contradict our assumption, so the claim is proved. Next, note that if $C \in \tilde\C$, then any extension of $C$ by $X$ must be componentwise split, so
$$ \Ext^1_{\Cpx\G}(X,C) = \Ext^1_{\CGcs}(X,C). $$
Hence, $X \in \dg{\F'}$ directly by Lemma~\ref{lem:stable_hom} and the definition; see Notation~\ref{not:F tilde and dg-F}.

If on the other hand $X \in \dg{\F'}$, we just retrace the steps. Namely, we observe that for each $C \in \C$ we have $\Ext^1_{\CGcs}(X,C) = 0$ because of Lemma~\ref{lem:stable_hom} and $\Ext^1_{\Cpx\G}(X,C) = \Ext^1_{\CGcs}(X,C)$ because $X \in \Cpx{\F'}$. Hence $\Ext^1_{\Cpx\G}(X,\C) = 0$ and the proof of (2) is complete.

Finally, to prove the deconstructibility of $\dg\F$, note that we have proved that $\dg{\F'}$ consists precisely of direct summands of complexes from $\Filt\Q$. Thus, both $\dg{\F'}$ and $\dg\F = \Cpx\F \cap \dg{\F'}$ are deconstructible by Propositions~\ref{prop:add_deconstr} and~\ref{prop:deconstr_C(F)}.
\end{proof}

\begin{proof}[Proof of Theorem~\ref{thm:deconstr_cpxs}]
The theorem now follows directly by Propositions~\ref{prop:deconstr_C(F)}--\ref{prop:deconstr_dgF}, when taking $\kappa$ appropriately large.
\end{proof}

\appendix
\section{Properties of Grothendieck categories}
\label{sec:properties of Grothendieck categories}

In the text we need some basic properties of $<\kappa$-presentable and $<\kappa$-generated objects in Grothendieck categories. We also need to know that the lattice of subobjects of an object in a Grothendieck category is always modular and upper continuous. These facts can be mostly found in the monographs~\cite{S} by Stenstr{\"o}m and~\cite{GaU} by Gabriel and Ulmer. Here we give a short overview of the necessary properties together with appropriate references or short proofs. We start with a basic property of any Grothendieck category.

\begin{lem} \label{lem:locally presentable}
Any Grothendieck category $\G$ is locally $<\kappa$-presentable for some infinite regular cardinal $\kappa$. In particular, for any $X \in \G$ there is an infinite regular cardinal $\lambda = \lambda(X)$ \st $X$ is $<\lambda$-presentable.
\end{lem}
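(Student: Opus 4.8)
The plan is to reduce the statement to the general theory of locally presentable categories, via the presentation of a Grothendieck category as a localization of a module category. First I would fix a generator $G$ of $\G$ and set $R = \End_\G(G)$. By the Gabriel--Popescu theorem, the functor $\Hom_\G(G,-)\colon \G \to \Mod R$ is fully faithful and admits an exact left adjoint, so that $\G$ is, up to equivalence, a full reflective subcategory of $\Mod R$ whose reflector is exact and whose inclusion preserves and reflects monomorphisms. Equivalently, $\G$ is the Gabriel quotient of $\Mod R$ by a hereditary torsion class, cut out by a Gabriel topology $\T$ of right ideals of $R$.

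The key step is to locate the cardinal $\kappa$. Since $\Mod R$ is locally finitely presentable, I would choose a regular cardinal $\kappa$ large enough that $R$ is $<\kappa$-presentable and the topology $\T$ has a basis consisting of $<\kappa$-generated right ideals. This second condition is exactly what guarantees that the essential image of $\G$ in $\Mod R$ is closed under $\kappa$-directed colimits, i.e.\ that $\G$ is \emph{accessibly embedded} in the locally presentable category $\Mod R$. By the standard result that a reflective subcategory of a locally presentable category which is closed under sufficiently directed colimits is again locally presentable (see \cite{AR} and \cite{GaU}), one obtains that $\G$ is locally $<\lambda$-presentable for some regular $\lambda \ge \kappa$, proving the first assertion. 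In keeping with the elementary tone of the appendix, one may instead simply cite the theorem of Gabriel and Ulmer \cite{GaU} that every Grothendieck category is locally presentable, bypassing the explicit construction of $\kappa$.

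For the second assertion, I would invoke that in any locally $<\kappa$-presentable category every object is presentable. Given $X \in \G$, the first part lets me write $X$ as a $\kappa$-directed colimit $X = \colim_{i \in I} X_i$ of $<\kappa$-presentable objects. Choosing a regular cardinal $\lambda$ that is sharply larger than $\kappa$ and exceeds $\card{I}$, a routine cofinality argument shows that $\Hom_\G(X,-)$ commutes with $\lambda$-directed colimits, so $X$ is $<\lambda$-presentable; this is essentially \cite[1.16]{AR}.

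The main obstacle is the construction of $\kappa$, specifically the verification that the hereditary torsion theory cutting out $\G$ inside $\Mod R$ is determined by a \emph{set} of data of bounded size, ensuring closure of $\G$ under sufficiently directed colimits. This is exactly where the generator and the exactness of direct limits in $\G$ enter, and it is the technical heart of the Gabriel--Ulmer theorem; for the purposes of this paper it is cleanest to cite it rather than to reprove it.
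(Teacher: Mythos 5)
Your proposal is correct and follows essentially the same route as the paper: both proofs invoke the Gabriel--Popescu theorem to realize $\G$ as a full subcategory $\G'\subseteq\Mod{R}$ closed under $\kappa$-direct limits for a suitable regular $\kappa$. The only difference is that the paper finishes more directly, observing that $H(G)=R$ is then $<\kappa$-presentable in $\G'$ (which already gives a generating set of $<\kappa$-presentable objects, all that the paper's definition of ``locally $<\kappa$-presentable'' requires), whereas you invoke the heavier general reflection theorem from \cite{AR,GaU}; both are fine, and your cofinality argument for the second assertion is the standard one.
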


\begin{proof}
This is a well-known consequence of the Popescu-Gabriel Theorem~\cite[X.4.1]{S}. Namely, if $G \in \G$ is a generator and $R = \End_\G(G)$, then $H = \Hom_\G(G,-): \G \to \Mod{R}$ induces an equivalence of $\G$ with a full subcategory $\G' \subseteq \Mod{R}$. By its definition on~\cite[pp. 198--199]{S}, $\G'$ is closed under $\kappa$-direct limits in $\Mod{R}$ for some infinite regular cardinal $\kappa$ and $H(G) = R$ is clearly $<\kappa$ presentable in $\G'$.
\end{proof}

Next we have an important characterization of $<\lambda$-presentable objects for $\lambda$ large enough.

\begin{lem} \label{lem:size of kernels}
Let $\G$ be a Grothendieck category and $\kappa$ an infinite regular cardinal \st $\G$ is locally $<\kappa$-presentable. Then the following are equivalent for an object $X \in \G$ and a regular cardinal $\lambda\ge\kappa$:
\begin{enumerate}
\item $X$ is $<\lambda$-presentable.
\item $X$ is $<\lambda$-generated and, whenever $0 \to K \to E \to X \to 0$ is a short exact sequence in $\G$ \st $E$ is $<\lambda$-generated, $K$ is also $<\lambda$-generated.
\end{enumerate}
\end{lem}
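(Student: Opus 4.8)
The plan is to establish the equivalence by proving both implications, using the characterization of $<\lambda$-presentable and $<\lambda$-generated objects in terms of preservation of the appropriate $\kappa$-direct limits. The key technical fact I would lean on is that for $\lambda \geq \kappa$, every object is a $\lambda$-direct union of its $<\lambda$-generated subobjects, so that the lattice $\Subobj(X)$ together with the filtration structure from Grothendieck categories provides the needed directed systems.

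For the implication $(1) \Rightarrow (2)$, suppose $X$ is $<\lambda$-presentable. First I would note that $<\lambda$-presentable trivially implies $<\lambda$-generated, since preservation of all $\lambda$-direct limits is stronger than preservation of those with monomorphic transition maps. The substantive part is the statement about kernels. Given a short exact sequence $0 \to K \to E \to X \to 0$ with $E$ being $<\lambda$-generated, I would write $K$ as the $\lambda$-direct union of its $<\lambda$-generated subobjects $(K_i \mid i \in I)$ and consider the induced direct system of quotients $E/K_i$. Since $E$ is $<\lambda$-generated and each $E/K_i$ is a quotient of $E$, one controls the relevant generation data; the colimit of $E/K_i$ is $E/K \cong X$. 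Using that $X$ is $<\lambda$-presentable, the identity map on $X$ factors through some $E/K_{i_0}$, which forces $K = K_{i_0}$ to be $<\lambda$-generated. This factorization-through-a-stage argument is the standard way $<\lambda$-presentability is exploited.

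For the converse $(2) \Rightarrow (1)$, assume $X$ is $<\lambda$-generated and has the kernel-smallness property. I would take a $\lambda$-direct limit $M = \li M_j$ with colimit maps $u_j: M_j \to M$ and show that $\Hom_\G(X, -)$ preserves it, i.e. that any $f: X \to M$ factors through some $u_j$ essentially uniquely. Surjectivity of the canonical map $\li \Hom(X, M_j) \to \Hom(X, M)$ follows from $<\lambda$-generatedness applied after presenting $X$ as a quotient $0 \to K \to P \to X \to 0$ with $P$ a coproduct of generators; here the kernel hypothesis enters to guarantee $K$ is $<\lambda$-generated, which in turn lets one lift and compatibly factor the relation defining $f$. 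Injectivity (uniqueness of the factorization up to refinement) is the more delicate bookkeeping but follows the same pattern of pushing a finite amount of data into a single stage of the directed system.

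The main obstacle I anticipate is handling the converse direction cleanly, specifically ensuring that both the map and the relations it must respect can be absorbed into a single index $j$ of the $\lambda$-directed system. This requires the kernel $K$ to be $<\lambda$-generated so that its generators, and the equations they impose, involve fewer than $\lambda$ pieces of data, which can then be collected using $\lambda$-directedness of the index set. The interplay between $<\lambda$-generated (controlling generators) and the kernel condition (controlling relations) is precisely what upgrades generatedness to presentability, and getting this absorption argument right — rather than the individual limit computations — is where the real work lies.
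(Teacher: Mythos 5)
Your overall strategy coincides with the paper's: the paper disposes of this lemma by citing Stenström's Proposition V.3.4 (the case $\kappa=\aleph_0$) together with Gabriel--Ulmer, and your outline reconstructs exactly that standard argument --- writing the kernel as a $\lambda$-directed union of $<\lambda$-generated subobjects for $(1)\Rightarrow(2)$, and trading generators against relations through a presentation by the generating set for $(2)\Rightarrow(1)$.

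There is, however, one step that fails as stated. In $(1)\Rightarrow(2)$ you write $K=\bigcup_{i\in I}K_i$ as a $\lambda$-directed union of $<\lambda$-generated subobjects, identify $X\cong\li E/K_i$, use $<\lambda$-presentability of $X$ to factor $\mathrm{id}_X$ through some $E/K_{i_0}$, and assert that this ``forces $K=K_{i_0}$''. It does not: the factorization only says that the colimit map $\pi_{i_0}\colon E/K_{i_0}\to X$ is a split epimorphism, and a split epimorphism may well have nonzero kernel (take $E=R^2$, $K=0\oplus R$ and $K_{i_0}=0$; then $E/K_{i_0}\to E/K$ splits although $K\ne K_{i_0}$). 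The correct conclusion is that $\Ker\pi_{i_0}=K/K_{i_0}$ is a direct summand of $E/K_{i_0}$, hence a quotient of the $<\lambda$-generated object $E$, hence itself $<\lambda$-generated; since $K_{i_0}$ is $<\lambda$-generated and the class of $<\lambda$-generated objects is closed under extensions (the first half of Lemma~\ref{lem:small objects closed under ext}, whose proof does not depend on the present lemma), $K$ is $<\lambda$-generated. That $K$ then equals some $K_i$ is a consequence of this, not the mechanism for proving it. With this one-line repair the direction is correct. Your sketch of $(2)\Rightarrow(1)$ is the standard one and workable, but note that the roles are the reverse of what you suggest: injectivity of $\li\Hom_\G(X,M_j)\to\Hom_\G(X,\li M_j)$ needs only that images of maps out of $X$ are $<\lambda$-generated, while it is surjectivity --- lifting $f$ through a presentation $0\to K\to\bigoplus_{i\in I}S_i\to X\to 0$ with $\card{I}<\lambda$ and then annihilating the induced map on $K$ at a later stage of the system --- that actually uses the hypothesis that $K$ is $<\lambda$-generated.
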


\begin{proof}
A straightforward generalization of the proof for~\cite[Proposition V.3.4]{S} (which is given for $\kappa=\aleph_0$) applies. See also~\cite[6.6(e)]{GaU}.
\end{proof}

The previous proposition provides us with a way to recognize $<\lambda$-generated and presentable objects using their presentations by a fixed family of $<\kappa$-presentable generators.

\begin{lem} \label{lem:identifying lambda presented} \cite[7.6 and 9.3]{GaU}
Let $\G$ be a Grothendieck category, $\kappa$ be an infinite regular cardinal, and assume $\G$ has a generating set $\clS$ consisting of $<\kappa$-presentable objects. Then the following hold for an object $X \in \G$ and a regular cardinal $\lambda\ge\kappa$:
\begin{enumerate}
\item $X$ is $<\lambda$-generated \iff there exists an exact sequence
$$ \bigoplus_{i \in I} S_i \la X \la 0 $$
with $\card{I}<\lambda$ and $S_i \in \clS$ for all $i \in I$.

\item $X$ is $<\lambda$-presentable \iff there exists an exact sequence
$$ \bigoplus_{j \in J} S_j \la \bigoplus_{i \in I} S_i \la X \la 0 $$
with $\card{I}, \card{J}<\lambda$ and $S_i, S_j \in \clS$ for all $i \in I$ and $j \in J$.
\end{enumerate}
\end{lem}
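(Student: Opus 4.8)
The plan is to deduce both equivalences from the definitions of $<\lambda$-generated and $<\lambda$-presentable objects together with a few standard stability facts, exploiting that $\clS$ is generating. The facts I will take for granted are: (a) a coproduct of strictly fewer than $\lambda$ copies of $<\lambda$-generated (resp. $<\lambda$-presentable) objects is again $<\lambda$-generated (resp. $<\lambda$-presentable), which uses regularity of $\lambda$; (b) a quotient of a $<\lambda$-generated object is $<\lambda$-generated; and (c) since every $\lambda$-directed system is in particular $\kappa$-directed whenever $\kappa\le\lambda$, each $S\in\clS$ is $<\lambda$-presentable, not merely $<\kappa$-presentable. I will also use that $\clS$ being generating means every $X$ admits an epimorphism $\bigoplus_{i\in I'}S_i\to X$ with all $S_i\in\clS$.

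For part (1), the implication from the existence of a presentation is immediate: such an $X$ is a quotient of the $<\lambda$-generated object $\bigoplus_{i\in I}S_i$, hence $<\lambda$-generated by (a) and (b). For the converse, I would fix an epimorphism $p\colon\bigoplus_{i\in I'}S_i\to X$ and consider the family of subobjects $X_J=\Img\big(\bigoplus_{i\in J}S_i\to X\big)$ indexed by the subsets $J\subseteq I'$ of cardinality $<\lambda$. Because $\lambda$ is regular this index poset is $\lambda$-directed, and $\bigcup_J X_J=X$ since $p$ is epi. As $X$ is $<\lambda$-generated, $\Hom_\G(X,-)$ preserves this $\lambda$-directed union of monomorphisms, so $\mathrm{id}_X$ factors through some $X_J\hookrightarrow X$; this forces $X_J=X$ and yields the required epimorphism $\bigoplus_{i\in J}S_i\to X$ with $\card{J}<\lambda$.

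For part (2), the forward implication combines part (1) with Lemma~\ref{lem:size of kernels}: a $<\lambda$-presentable $X$ is $<\lambda$-generated, so part (1) gives an epimorphism $E=\bigoplus_{i\in I}S_i\to X$ with $\card{I}<\lambda$; its kernel $K$ is $<\lambda$-generated by Lemma~\ref{lem:size of kernels} (applied to the $<\lambda$-generated object $E$), and part (1) applied to $K$ produces an epimorphism $\bigoplus_{j\in J}S_j\to K$ with $\card{J}<\lambda$, whose composite into $E$ has image $K$ and hence fits into the desired two-term presentation. For the reverse implication I would verify directly that $\Hom_\G(X,-)$ preserves every $\lambda$-directed colimit $M=\colim_t M_t$. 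Given $\phi\colon X\to M$, I precompose with $E\to X$ and use $<\lambda$-presentability of each $S_i$ together with $\card{I}<\lambda$ and $\lambda$-directedness to factor $E\to M$ through some $M_t$; then, since the composite $\bigoplus_{j}S_j\to E\to M_t\to M$ vanishes, $<\lambda$-presentability of each $S_j$ (with $\card{J}<\lambda$) lets me pass to a stage $t'$ at which $\bigoplus_j S_j\to M_{t'}$ is already zero, so the map factors through $X=\Coker(\bigoplus_j S_j\to E)$, producing a lift of $\phi$. Injectivity of the comparison map is the analogous, easier computation, using that $E$ itself is $<\lambda$-presentable by (a).

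I expect the reverse implication of part (2) to be the main obstacle: it is the only place where one genuinely needs $<\lambda$-presentability (not just generation) of the objects of $\clS$, and the bookkeeping is delicate, since one must simultaneously lift the ``generators'' part $\bigoplus_i S_i$ across the colimit and push the ``relations'' part $\bigoplus_j S_j$ to zero, assembling a single common stage from $<\lambda$-many local stages via regularity of $\lambda$. An alternative route avoiding this chase would be to prove a Schanuel-type statement showing that smallness of the kernel for one presentation by a $<\lambda$-generated object forces it for all, which then feeds directly into Lemma~\ref{lem:size of kernels}; I would nonetheless regard the direct colimit argument as cleaner and more self-contained.
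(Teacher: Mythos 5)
Your argument for part (1) and for the forward implication of part (2) is exactly the paper's: the `only if' part of (1) is proved there by writing $X$ as the $\lambda$-directed union of the images $\Img\big(p\restriction\bigoplus_{i\in J}S_i\big)$ over subsets $J$ of cardinality $<\lambda$ and using $<\lambda$-generation to stabilize, and the forward half of (2) is obtained from (1) together with Lemma~\ref{lem:size of kernels} applied to the kernel of the resulting epimorphism. Where you diverge is the reverse implication of (2), which the paper also disposes of as ``an easy consequence of (1) and Lemma~\ref{lem:size of kernels}'': the intended route is the Schanuel-type one you mention as an alternative (given any short exact sequence $0\to K\to F\to X\to 0$ with $F$ $<\lambda$-generated, pull back along the $<\lambda$-presentable object $E=\bigoplus_{i\in I}S_i$ to see that $K$ is $<\lambda$-generated, and then invoke the characterization of $<\lambda$-presentability in Lemma~\ref{lem:size of kernels}). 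You instead verify directly that $\Hom_\G(X,-)$ preserves $\lambda$-directed colimits by lifting $\bigoplus_{i}S_i\to M$ to a stage $M_t$ and then killing the relations $\bigoplus_{j}S_j$ at a later stage; this is correct (the surjectivity and injectivity chases both go through, using regularity of $\lambda$ to find common stages and the fact that each $S\in\clS$ is $<\lambda$-presentable for $\lambda\ge\kappa$), and it has the merit of being self-contained and of not presupposing Lemma~\ref{lem:size of kernels} for that direction, at the cost of a longer diagram chase than the paper's two-line reduction. Either route is acceptable; your fact (a) on small coproducts of $<\lambda$-presentable objects is the same ingredient the paper draws from \cite[Satz 6.2]{GaU}.
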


\begin{proof}
The `if' part of (1) follows from the fact that $<\lambda$-generated objects are closed under factors and coproducts with $<\lambda$ summands. For the `only if' part of (1), take an epimorphism $p: \bigoplus_{u \in U} S_u \to X$ for some set $U$ and denote by $\I$ be the set of all subsets of $U$ of cardinality $<\lambda$. Then $X = \li_{I \in \I} \Img (p \restriction \bigoplus_{i \in I} S_i)$ is a $\lambda$-directed limit of monomorphisms, so $\Img (p \restriction \bigoplus_{i \in I} S_i) = X$ for some $I \in \I$. Part (2) is an easy consequence of (1) and Lemma~\ref{lem:size of kernels}.
\end{proof}

It is well-known for module categories that the classes of $<\lambda$-generated and $<\lambda$-presented modules are closed under extensions. We have an analogue for Grothendieck categories.

\begin{lem} \label{lem:small objects closed under ext}
Let $\G$ be a Grothendieck category and $\kappa$ an infinite regular cardinal \st $\G$ is locally $<\kappa$-presentable. Then for any regular cardinal $\lambda\ge\kappa$, the classes of $<\lambda$-generated and $<\lambda$-presentable objects are closed under extensions.
\end{lem}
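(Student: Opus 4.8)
The plan is to treat the two classes separately and, for each, reduce the statement to the characterizations already established in Lemma~\ref{lem:identifying lambda presented} and Lemma~\ref{lem:size of kernels}. So I fix a short exact sequence $0 \to X \to Y \to Z \to 0$ in $\G$ and a generating set $\clS$ of $<\kappa$-presentable objects.

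First I would handle the $<\lambda$-generated case. Suppose $X$ and $Z$ are $<\lambda$-generated. The naive idea is to glue together generating coproducts for $X$ and $Z$ into one for $Y$, but the obvious obstacle is that a coproduct of objects of $\clS$ mapping onto $Z$ need not lift along the epimorphism $Y \to Z$. To circumvent this, I would start from \emph{any} epimorphism $q\colon \bigoplus_{u \in U} S_u \to Y$ (which exists since $\clS$ generates) and compose it with $Y \to Z$. Since $Z$ is $<\lambda$-generated, the argument inside Lemma~\ref{lem:identifying lambda presented}(1) — writing $Z$ as the $\lambda$-directed union of the images of the partial coproducts over subsets of $U$ of size $<\lambda$ and using that $\Hom_\G(Z,-)$ preserves such unions of monomorphisms — produces a subset $U_0 \subseteq U$ with $\card{U_0} < \lambda$ for which $\bigoplus_{u \in U_0} S_u \to Z$ is already epi. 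Setting $Y_0 = \Img\big(\bigoplus_{u \in U_0} S_u \to Y\big)$, this forces $Y_0 + X = Y$. Choosing in addition an epimorphism $\bigoplus_{i \in I} S_i \to X$ with $\card{I} < \lambda$ and composing it with $X \hookrightarrow Y$, the combined map on $\bigoplus_{u \in U_0} S_u \oplus \bigoplus_{i \in I} S_i$ has image $Y_0 + X = Y$ and a $<\lambda$-indexed source. By Lemma~\ref{lem:identifying lambda presented}(1), $Y$ is $<\lambda$-generated.

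Second I would handle the $<\lambda$-presentable case. Now assume $X$ and $Z$ are $<\lambda$-presentable. Since presentable implies generated, the previous paragraph already gives that $Y$ is $<\lambda$-generated, so by Lemma~\ref{lem:size of kernels} it remains to verify the kernel condition: for any short exact sequence $0 \to K \to E \to Y \to 0$ with $E$ being $<\lambda$-generated, $K$ is $<\lambda$-generated too. I would introduce the intermediate object $E' = \Ker(E \to Y \to Z)$, i.e.\ the preimage in $E$ of the copy of $X$ inside $Y$. This splits the situation into two short exact sequences, $0 \to E' \to E \to Z \to 0$ and $0 \to K \to E' \to X \to 0$. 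Applying Lemma~\ref{lem:size of kernels} to the $<\lambda$-presentable object $Z$ and the $<\lambda$-generated $E$ shows that $E'$ is $<\lambda$-generated; applying it once more to the $<\lambda$-presentable object $X$ and the now $<\lambda$-generated $E'$ shows that $K$ is $<\lambda$-generated. Hence $Y$ satisfies condition (2) of Lemma~\ref{lem:size of kernels} and is $<\lambda$-presentable.

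The only genuinely delicate point is the $<\lambda$-generated case, precisely because one cannot transport a generating family of $Z$ back to $Y$; the trick of restricting a fixed global epimorphism onto $Y$ and invoking the stabilization built into Lemma~\ref{lem:identifying lambda presented}(1) is what makes it work. Once that is in place, the presentable case is a routine double application of Lemma~\ref{lem:size of kernels} through the intermediate object $E'$, and no further input about $\G$ beyond local $<\kappa$-presentability is required.
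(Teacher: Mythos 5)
Your argument is correct. For the $<\lambda$-presentable half you take exactly the paper's route: given $0 \to K \to E \to Y \to 0$ with $E$ being $<\lambda$-generated, you form the kernel of the composite $E \to Y \to Z$ (the paper's $L$, your $E'$), apply Lemma~\ref{lem:size of kernels} to $Z$ and then to $X$ to see that $K$ is $<\lambda$-generated, and conclude with a final application of that lemma. The only divergence is in the $<\lambda$-generated half: the paper simply cites an ``obvious generalization'' of Stenstr{\"o}m's Lemma V.3.1(ii), which argues directly with $\lambda$-directed unions of subobjects of $Y$ and, as the paper explicitly notes, needs no local presentability hypothesis at all; you instead restrict a global epimorphism $\bigoplus_{u \in U} S_u \to Y$, use the stabilization argument built into Lemma~\ref{lem:identifying lambda presented}(1) to cut the index set over $Z$ down to size $<\lambda$, and add a $<\lambda$-indexed cover of $X$. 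Your version is perfectly valid under the stated hypotheses; the only (harmless) cost is that it genuinely uses the generating set of $<\kappa$-presentable objects, so it does not recover the paper's side remark that the generated case holds in any Grothendieck category.
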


\begin{proof}
For the class of $<\lambda$-generated objects, an obvious generalization of~\cite[Lemma V.3.1(ii)]{S} applies. We do not even use the assumption that $\G$ is locally $<\kappa$-presentable.

Suppose we have a short exact sequence $0 \to X \to Y \overset{p}\to Z \to 0$ \st $X,Z$ are $<\lambda$-presentable. Then $Y$ is $<\lambda$-generated. In order to apply Lemma~\ref{lem:size of kernels}, assume that $0 \to K \to E \overset{q}\to Y \to 0$ is any short exact sequence \st $E$ is $<\lambda$-generated and consider the commutative diagram with exact rows, where $L = \Ker (pq)$:
$$
\begin{CD}
  0 @>>> L @>>> E @>>>  Z @>>> 0  \\
  @.  @VtVV   @VqVV     @|        \\
  0 @>>> X @>>> Y @>p>> Z @>>> 0
\end{CD}
$$
Since $Z$ is $<\lambda$-presentable, $L$ is $<\lambda$-generated by Lemma~\ref{lem:size of kernels}. Since $X$ is $<\lambda$-presentable, $K \cong \Ker t$ is $<\lambda$-generated. Applying Lemma~\ref{lem:size of kernels} for the third time, it follows that $Y$ is $<\lambda$-presentable.
\end{proof}

In the proof of the Generalized Hill Lemma (Theorem~\ref{thm:generalized hill lemma}), the following consequence is used:

\begin{cor} \label{cor:small objects closed under small filtrations}
Let $\G$ be a locally $<\kappa$-presentable Grothendieck category for some infinite regular cardinal $\kappa$. If $X$ is an object with a filtration $(X_\alpha \mid \alpha\le\sigma)$ by $<\kappa$-presentable objects and \st $\sigma<\kappa$, then $X$ is $<\kappa$-presentable.
\end{cor}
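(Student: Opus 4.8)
The plan is to prove, by transfinite induction on $\beta\le\sigma$, that each member $X_\beta$ of the filtration is $<\kappa$-presentable; the assertion is then the case $\beta=\sigma$. The case $\beta=0$ is trivial since $X_0=0$. For a successor $\beta=\gamma+1$ there is a short exact sequence $0\to X_\gamma\to X_{\gamma+1}\to X_{\gamma+1}/X_\gamma\to 0$ whose outer terms are $<\kappa$-presentable—the first by the inductive hypothesis, the second by assumption, being a factor of the filtration in the sense of Definition~\ref{def:filtr}—so $X_{\gamma+1}$ is $<\kappa$-presentable by Lemma~\ref{lem:small objects closed under ext} applied with $\lambda=\kappa$. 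Thus the only real work is the limit step.

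For a limit ordinal $\mu\le\sigma$ I would argue as follows. For each $\gamma<\mu$ choose a $<\kappa$-generated subobject $A_\gamma$ with $X_{\gamma+1}=X_\gamma+A_\gamma$ and, using Lemma~\ref{lem:identifying lambda presented}(1), an epimorphism $\bigoplus_{i\in I_\gamma}S_i\to A_\gamma$ with $\card{I_\gamma}<\kappa$ and $S_i\in\clS$. Setting $P_{<\gamma}=\bigoplus_{\delta<\gamma}\bigoplus_{i\in I_\delta}S_i$, the induced maps assemble into a compatible family of epimorphisms $p_\gamma\colon P_{<\gamma}\to X_\gamma$ (using $X_\gamma=\sum_{\delta<\gamma}A_\delta$), with $P:=P_{<\mu}$ and $p:=p_\mu\colon P\to X_\mu$. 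Because $\card{I_\gamma}<\kappa$ for every $\gamma$ while $\mu<\kappa$ and $\kappa$ is regular, the index set of $P$ has cardinality $<\kappa$, so $X_\mu$ is already $<\kappa$-generated. The heart of the matter is to bound $\Ker p$. Each $p_\gamma$ is an epimorphism from a $<\kappa$-generated object onto the $<\kappa$-presentable object $X_\gamma$ (inductive hypothesis), so $\Ker p_\gamma$ is $<\kappa$-generated by Lemma~\ref{lem:size of kernels}. Since $P=\bigcup_{\gamma<\mu}P_{<\gamma}$ is a directed union and intersection commutes with directed unions in the subobject lattice, one checks $\Ker p_\gamma=\Ker p\cap P_{<\gamma}$ and hence $\Ker p=\bigcup_{\gamma<\mu}\Ker p_\gamma$, a directed union of fewer than $\kappa$ many $<\kappa$-generated subobjects; by regularity of $\kappa$ and Lemma~\ref{lem:identifying lambda presented}(1) this union is again $<\kappa$-generated. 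Feeding $p$ together with the bound on $\Ker p$ into Lemma~\ref{lem:identifying lambda presented}(2) then yields that $X_\mu$ is $<\kappa$-presentable, completing the induction.

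The step I expect to be the main obstacle is precisely this limit case, and within it the identification $\Ker p=\bigcup_{\gamma<\mu}\Ker p_\gamma$ combined with the size control of each $\Ker p_\gamma$: the subtlety is that a subobject of a $<\kappa$-generated object need not itself be $<\kappa$-generated, so one genuinely needs Lemma~\ref{lem:size of kernels} (via the inductive presentability of $X_\gamma$) rather than any naive closure property, and the regularity of $\kappa$ must be used twice—once to keep $P$ small and once to keep the union of the kernels small. A more abstract alternative would be to note that $X_\mu$ is a $\kappa$-small colimit of $<\kappa$-presentable objects and to invoke the commutation of $\kappa$-filtered colimits with $<\kappa$-small limits in $\Ab$, applied to $\Hom_\G(X_\mu,-)\cong\varprojlim_{\gamma<\mu}\Hom_\G(X_\gamma,-)$; I would nonetheless prefer the concrete kernel argument above, as it stays entirely within the lemmas already established in the appendix.
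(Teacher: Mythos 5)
Your proof is correct, and its skeleton (transfinite induction on $\beta$, with Lemma~\ref{lem:small objects closed under ext} handling the successor step) is exactly the paper's. Where you diverge is the limit step, which you rightly identify as the only real content: the paper disposes of it in one line by citing the fact that $<\kappa$-presentable objects are closed under colimits of diagrams of size $<\kappa$ (\cite[Satz 6.2]{GaU}) --- essentially the ``abstract alternative'' you mention at the end --- whereas you reprove the special case needed here by hand, building a compatible family of presentations $p_\gamma\colon P_{<\gamma}\to X_\gamma$, identifying $\Ker p=\bigcup_{\gamma<\mu}\Ker p_\gamma$ via upper continuity of $\Subobj(P)$ (Lemma~\ref{lem:upper cont and modular}), and controlling each $\Ker p_\gamma$ through Lemma~\ref{lem:size of kernels}. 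Your route buys self-containedness (everything reduces to the appendix lemmas plus regularity of $\kappa$) at the cost of length; the paper's citation is shorter and more general, covering arbitrary $\kappa$-small colimits rather than well-ordered unions. The only point you pass over quickly is the existence of the $<\kappa$-generated subobjects $A_\gamma$ with $X_{\gamma+1}=X_\gamma+A_\gamma$: this needs the observation that a $<\kappa$-generated quotient of $X_{\gamma+1}$ is the image of some $<\kappa$-generated subobject (write $X_{\gamma+1}$ as the $\kappa$-directed union of its $<\kappa$-generated subobjects and use that $X_{\gamma+1}/X_\gamma$ is $<\kappa$-generated), but the paper itself asserts exactly this without proof in the construction preceding Theorem~\ref{thm:generalized hill lemma}, so it is a fair step to take for granted.
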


\begin{proof}
This is easily proved by induction on $\sigma$ using Lemma~\ref{lem:small objects closed under ext} and the fact that the class of $<\kappa$-presentable objects is closed under taking colimits of diagrams of size less than $\kappa$ (see~\cite[Satz 6.2]{GaU}).
\end{proof}

Finally, we establish some properties of lattices of subobjects. Recall that if $(\clL, \vee, \wedge)$ is a complete lattice, we call $\clL$ \emph{upper continuous} (cf.~\cite[\S III.5]{S}) if
$$ \big( \bigvee_{d \in D} d \big) \wedge a = \bigvee_{d \in D} (d \wedge a) $$
whenever $a \in \clL$ and $D \subseteq \clL$ is a directed subset. Now we have:

\begin{lem} \label{lem:upper cont and modular}
Let $\G$ be a Grothendieck category. If $X \in \G$ is an object, the lattice $(\Subobj(X), \Sigma, \cap)$ of subobjects is a complete modular upper continuous lattice.
\end{lem}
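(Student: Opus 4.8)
The plan is to check the three claims separately, noting that completeness has essentially been recorded already in Section~\ref{sec:prelim}, where it was observed that $\Subobj(X)$ is a complete lattice whose joins and meets are the sums $\Sigma$ and intersections $\cap$ of subobjects. Thus only modularity and upper continuity remain, and I would treat these by quite different means: modularity is purely a feature of the additive (abelian) structure, whereas upper continuity is where the defining exactness of direct limits in $\G$ enters.

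For modularity I would reduce to modules via the Popescu--Gabriel embedding already invoked in Lemma~\ref{lem:locally presentable}. Fixing a generator $G$ and putting $R = \End_\G(G)$, the functor $H = \Hom_\G(G,-)\colon \G \to \Mod{R}$ is fully faithful and exact. Being exact and additive it preserves pullbacks and the images of the maps $A \oplus B \to X$, hence it preserves binary intersections and sums of subobjects; being fully faithful it is injective on subobjects and reflects isomorphisms. Consequently $H$ identifies $(\Subobj(X),\Sigma,\cap)$ with a sublattice of the submodule lattice of $HX$. Since submodule lattices are modular and the modular law is a lattice identity, hence inherited by sublattices, $\Subobj(X)$ is modular. (One may also simply cite this standard fact, e.g.\ from~\cite{S}.)

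For upper continuity I would use that intersection with a fixed subobject $a \subseteq X$ is a kernel. Writing $\pi\colon X \to X/a$ for the canonical epimorphism, one has $d \cap a = \Ker(\pi \restriction d)$ for every $d \subseteq X$. Given a directed family $(d_i \mid i \in I)$ of subobjects, its join is the direct union $\bigcup_{i \in I} d_i = \li_i d_i$, and the induced map $\li_i d_i \to X/a$ is the direct limit of the maps $d_i \to X/a$. Since direct limits are exact in $\G$, they commute with kernels, so
\[
\Big(\bigcup_{i \in I} d_i\Big) \cap a = \Ker\big(\li_i d_i \to X/a\big) = \li_i \Ker\big(d_i \to X/a\big) = \bigcup_{i \in I}(d_i \cap a),
\]
which is exactly the upper continuity identity.

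The step I expect to require the most care is modularity, precisely because it must be argued without elements: the crux is verifying that the exact additive functor $H$ genuinely preserves the two lattice operations (pullbacks for $\cap$ and images of biproduct maps for $\Sigma$) and is injective on subobjects, so that $\Subobj(X)$ really does embed as a sublattice of a modular lattice. Once that is in place, upper continuity follows immediately from the exactness of direct limits, and completeness is already known, so the three assertions combine to give the lemma.
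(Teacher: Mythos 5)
The paper itself disposes of this lemma with a citation to~\cite[Propositions IV.5.3 and V.1.1(c)]{S}, so any genuine argument is already ``a different route''; your treatment of completeness and of upper continuity is fine, and the upper-continuity argument (intersection with $a$ as the kernel of the restriction of $X \to X/a$, then commuting $\Ker$ with the direct limit using AB5) is exactly the standard one. The problem is the modularity step, and it is a real gap: the Popescu--Gabriel functor $H = \Hom_\G(G,-)$ is \emph{not} exact in general. It is a right adjoint, hence left exact and preserves pullbacks and intersections, but the content of the Popescu--Gabriel theorem is that its \emph{left adjoint} is exact; $H$ itself is exact precisely when $G$ is projective, which fails for typical Grothendieck categories such as categories of sheaves. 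Concretely, $H(A+B)$ consists of all morphisms $G \to X$ factoring through $A+B$, and such a morphism need not decompose as a sum of one factoring through $A$ and one through $B$ unless $G$ lifts along the epimorphism $A \oplus B \to A+B$; so in general $H(A)+H(B) \subsetneqq H(A+B)$. Thus $H$ embeds $\Subobj(X)$ into $\Subobj(HX)$ as a meet-closed subposet but not as a sublattice, and the modular law --- being an identity in \emph{both} operations --- does not transfer along a map that preserves only one of them.

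The repair is either to do what the paper does and cite~\cite[Proposition IV.5.3]{S}, or to prove modularity directly in the abelian category: for subobjects $A \subseteq C$ and $B$ of $X$, the inclusion $A + (B \cap C) \subseteq (A+B) \cap C$ is formal, and for the reverse one passes to $X/A$, using that subobjects of $X/A$ correspond bijectively (and compatibly with sums and intersections of subobjects containing $A$) to subobjects of $X$ containing $A$, together with the Noether isomorphism $(A+B)/A \cong B/(A \cap B)$; this is an element-free argument valid in any abelian category and needs neither a generator nor exact direct limits. With that replacement your decomposition of the lemma into the three assertions goes through.
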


\begin{proof}
This is proved in~\cite[Propositions IV.5.3 and V.1.1(c)]{S}.
\end{proof}

\bibliographystyle{abbrv}
\bibliography{deconstr_bib}

\end{document}